\newcommand{\Spin}[1]{\ensuremath{\text{\upshape\rmfamily Spin}(#1)}}
\newcommand{\Sp}[1]{\mathrm{Sp}(#1)}
\newcommand{\SO}[1]{\mathrm{SO}(#1)}
\newcommand{\SU}[1]{\mathrm{SU}(#1)}
\newcommand{\U}[1]{\mathrm{U}(#1)}
\newcommand{\I}{\mathcal{I}}
\newcommand{\J}{J} %simbolo per le strutture complesse, composizione di 2 o piu' \I
\newcommand{\RO}{R}
\newcommand{\liespin}[1]{\mathop{\mathfrak{spin}}(#1)}
\newcommand{\liesu}[1]{\mathop{\mathfrak{su}}(#1)}
\newcommand{\lieso}[1]{\mathop{\mathfrak{so}}(#1)}
\newcommand{\Id}{\text{Id}}
\newcommand{\End}[1]{\mathrm{End}(#1)}
\newcommand{\Endcomplex}[1]{\mathrm{End}_\CC(#1)}
\newcommand{\Endminus}[1]{\mathrm{End}^-(#1)}
\newcommand{\Cl}{\mathrm{Cl}}
\newcommand{\PP}{\mathcal{P}}
\newcommand{\CC}{\mathbb{C}}
\newcommand{\HH}{\mathbb{H}}
\newcommand{\RR}{\mathbb{R}}
\newcommand{\ZZ}{\mathbb{Z}}
\newcommand{\OO}{\mathbb{O}}
\newcommand{\EIII}{\mathrm{EIII}}
\newcommand{\Esix}{\mathrm{E}_6}
\newcommand{\FII}{\mathrm{FII}}
\newcommand{\Ffour}{\mathrm{F}_4}
\newcommand{\tr}[1]{\mathop{\mathrm{tr}}(#1)}
\newcommand{\shortform}[1]{\ensuremath{{\scriptstyle\boldsymbol{#1}}}}
\numberwithin{equation}{section}
\theoremstyle{plain}
\newtheorem{te}{Theorem}[section]
\newtheorem*{te*}{Theorem}
\newtheorem{pr}[te]{Proposition}
\newtheorem{co}[te]{Corollary}
\theoremstyle{definition}
\newtheorem{de}[te]{Definition}
\theoremstyle{remark}
\newtheorem{re}[te]{Remark}
\date{\today}
\subjclass[2010]{Primary 53C26, 53C27, 53C38}
\begin{document}

\begin{otherlanguage}{italian}
\author{Maurizio Parton}
\address{Universit\`a di Chieti-Pescara\\ Dipartimento di Economia, viale della Pineta 4, I-65129 Pescara, Italy}
\email{parton@unich.it}
\author{Paolo Piccinni}
\address{Sapienza-Universit\`a di Roma\\ Dipartimento di Matematica, piazzale Aldo Moro 2, I-00185, Roma, Italy}
\email{piccinni@mat.uniroma1.it}
\end{otherlanguage}

\title{The even Clifford structure of the fourth Severi variety}

\begin{abstract}
The Hermitian symmetric space $M=\EIII$ appears in the classification of complete simply connected Riemannian manifolds carrying a parallel even Clifford structure \cite{MoSCSR}. This means the existence of a real oriented Euclidean vector bundle $E$ over it together with an algebra bundle morphism $\varphi:\Cl^0(E) \rightarrow \End{TM}$ mapping $\Lambda^2 E$ into skew-symmetric endomorphisms, and the existence of a metric connection on $E$ compatible with $\varphi$. We give an explicit description of such a vector bundle $E$ as a sub-bundle of $\End{TM}$. From this we construct a canonical differential 8-form on  $\EIII$, associated with its holonomy $\mathrm{Spin}(10)\cdot \mathrm{U}(1) \subset \mathrm{U}(16)$, that represents a generator of its cohomology ring. We relate it with a Schubert cycle structure by looking at $\EIII$ as the smooth projective variety $V_{(4)} \subset \CC P^{26}$ known as the fourth Severi variety.
\end{abstract}

\keywords{Clifford structure, exceptional symmetric space, octonions, canonical differential form}
\thanks{Both authors were supported by the GNSAGA group of INdAM and by the MIUR under the PRIN Project ``Variet\`a reali e complesse: geometria, topologia e analisi armonica''}

\maketitle
\tableofcontents

\section{Introduction}
This paper deals with the compact Hermitian symmetric space
\[
\EIII = \Esix/(\mathrm{Spin}(10)\cdot\mathrm{U}(1)).
\]
Its holonomy group $G=\mathrm{Spin}(10)\cdot \mathrm{U}(1) \subset \mathrm{U}(16)$ gives rise to a $G$-structure we will describe in details both in the flat space $\CC^{16}$ and in sixteen dimensional complex Hermitian manifolds.

The symmetric space $\EIII$ appears in the literature in more than one context. For example it is often called the \emph{projective plane over the complex octonions}. One can in fact construct $\EIII$ by starting from the complex exceptional Jordan algebra
\[
\mathcal H_3(\CC \otimes \OO) = \Bigg\{ \Bigg( \begin{matrix}
c_1&x_3&\bar x_2\\
\bar x_3&c_2&x_1\\
x_2&\bar x_1 &c_3
\end{matrix} \Bigg), \; c_i \in \CC, \; x_i \in \CC \otimes \OO \Bigg\} \cong \CC^{27}
\]
of $3 \times 3$ Hermitian matrices over the composition algebra $\CC \otimes_\RR \OO$ of complex octonions, whose product is defined as $(c_1 \otimes x_1)(c_2 \otimes x_2)= c_1c_2 \otimes x_1x_2$. The subgroup of $\mathrm{GL}(\mathcal H_3(\CC \otimes \OO)) = \mathrm{GL}(27, \CC)$ of complex linear transformations preserving
\[
\det A = \frac{1}{6}(\rm{trace} \; A)^3 -\frac{1}{2}(\rm{trace} \; A)(\rm{trace} \; A^2) +\frac{1}{3} \; trace \; A^3
\]
turns out to be the exceptional simple complex Lie group $\Esix(\CC)$.
The action of $\Esix(\CC)$ on the associated projective space $\CC P^{26}= P(\mathcal H_3(\CC \otimes \OO))$ has three orbits, defined by the possible values of the rank of matrices. The closed orbit, consisting of rank one matrices and defined by the quadratic equation
\[
A^2 = (\mathrm{trace} \; A) A,
\]
turns out to be the symmetric space $\EIII$. By using the compact subgroup $\Esix \subset \Esix(\CC)$ one gets as isotropy subgroup $\mathrm{Spin}(10)\cdot \mathrm{U}(1) = (\mathrm{Spin}(10)\times \mathrm{U}(1))/\ZZ_4 $. The outlined construction is parallel with that of the projective Cayley plane $\FII=\Ffour/\mathrm{Spin}(9)$, and this is one reason for naming $\EIII$ the \emph{projective plane over the complex octonions}. Cf.\ for example \cite{AtBPPS} for such constructions of $\FII$ and $\EIII$, and \cite[pages 86--90]{YokELG} for a careful description of the subgroup $\mathrm{Spin}(10)\cdot \mathrm{U}(1)\subset \Esix$.

Our motivation for the present work has been the study of $\Spin{9}$-structures on 16-dimensional Riemannian manifolds. We did this in our previous work following the approach of Th.\ Friedrich \cite{FriWSS}, i.e.\ via a suitably chosen rank 9 vector sub-bundle of the endomorphism bundle (cf.\ also Section \ref{preliminaries} as well as \cite{PaPSAC}). Here we develop a similar approach for the groups $\Spin{10} \subset \SU{16}$ and $\Spin{10} \cdot \mathrm{U}(1) \subset \mathrm{U}(16)$. Our point of view fits in both contexts of Clifford systems and of even Clifford structures. These two notions are generally closely related, but not equivalent, as we will see in the present situation (cf.\ Theorem \ref{half-spin}).

The notion of \emph{Clifford system}, by definition a family $(P_0, \dots , P_m)$ of symmetric orthogonal and anticommuting endomorphisms in $\RR^N$, has been introduced in the early 1980s by D. Ferus, H. Karcher and H. F. M\"untzer in the framework of isoparametric hypersurfaces of spheres, and has been recently exploited in the study of singular Riemannian foliations in spheres (cf.\ Section \ref{h} for further informations). The second notion, of a \emph{Clifford structure}, has been instead proposed by A. Moroianu and U. Semmelmann, see \cite{MoSCSR,MoPHRH}, and studied in different contexts, being a unifying setting including K\"ahler, quaternion-K\"ahler, $\Spin{7}$, $\Spin{9}$ and other geometries. 

In this paper we describe a rank 10 vector sub-bundle $E \subset \End{TM}$ on a 16-dimensional Hermitian manifold $M$, equipped with a $\Spin{10} \cdot \U{1}$-structure, such that $\Lambda^2 E$ is mapped to the bundle $\Endminus{TM}$ of skew-symmetric endomorphisms. This is exactly the definition of an even Clifford structure (cf.\ Section \ref{Rosenfeld} for more details). As in the case of $\Spin{9}$, one can write (local) skew-symmetric matrices $\psi^D = (\psi_{\alpha \beta})_{0 \leq \alpha, \beta \leq 9}$ of the K\"ahler 2-forms associated with $\Lambda^2 E$.  According to \cite{MoSCSR}, when this structure is parallel with respect to the Levi-Civita connection and on the simply connected non flat case, the Hermitian symmetric space $\EIII$ is the only possibility for such a manifold.

A class of examples of even Clifford structures with $E \subset \End{TM}$ (now $M$ is a real Riemannian manifold of any dimension) is when $E$ is locally spanned by local Clifford systems of involutions on the tangent bundles, and related in the intersections by special orthogonal transformations. We will call \emph{essential} an even Clifford structure which is \emph{not} in this mentioned class of examples.

In this respect, we prove the following:
\begin{te}\label{half-spin} The flat space $\RR^{32}$ admits both a Clifford system $C_9= (\PP_0, \dots , \PP_9)$ and an essential even Clifford structure $E$, related with representations of the abstract group $\Spin{10}$. Namely, the Clifford system $C_9$ defines a real representation of $\Spin{10}$ in $\RR^{32}$, and the even Clifford structure $E$ defines (one of the two conjugate) half-spin representations of $\Spin{10}$ in $\CC^{16}$. 

Moreover, this essential even Clifford structure $E$ is globally defined on the Hermitian symmetric space $\EIII$. Thus here $E \subset \End{T\,\EIII}$, and it is locally described on $\EIII$ by skew-symmetric matrices
\[
\psi^D = (\psi_{\alpha \beta})_{0 \leq \alpha, \beta \leq 9}
\]
of local differential 2-forms, whose fourth coefficient $\tau_4$ of the characteristic polynomial gives a global closed differential 8-form
\[
\Phi_{\Spin{10}} = \tau_4(\psi^D).
\]
\end{te}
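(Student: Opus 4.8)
The plan is to work everything out first on the model fibre $\RR^{32}$ and then let the holonomy reduction globalise it to $\EIII$. I would realise $\RR^{32}\cong\OO^2\otimes_\RR\RR^2\cong(\CC\otimes\OO)^2$ and obtain the ten involutions by extending the octonionic $\Spin{9}$ Clifford system $(P_0,\dots,P_8)$ on $\OO^2\cong\RR^{16}$ recalled in Section~\ref{preliminaries} by one further anticommuting involution supplied by the extra $\RR^2\cong\CC$ factor: concretely one may take $\PP_i=P_i\otimes\sigma$ for $i=0,\dots,8$ and $\PP_9=\Id\otimes\sigma'$, with $\sigma,\sigma'$ two anticommuting symmetric involutions of $\RR^2$. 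Using only the composition--algebra identities one checks that each $\PP_\al$ is symmetric and orthogonal and that $\PP_\al\PP_\be+\PP_\be\PP_\al=2\delta_{\al\be}\Id$, so that $C_9=(\PP_0,\dots,\PP_9)$ is a Clifford system. These ten involutions generate the real algebra $\Cl(10)\cong\Mat{32}(\RR)$, whose unique irreducible module is $\RR^{32}$; since the even part is $\Cl^0(10)\cong\Mat{16}(\CC)$, the $\binom{10}{2}=45$ products $\PP_\al\PP_\be$ ($\al<\be$) are skew--symmetric, span a copy of $\liespin{10}$, and exponentiate to the real $32$--dimensional representation of $\Spin{10}$ of the statement.

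Next I would produce the complex structure as the Clifford volume element $J=\PP_0\cdots\PP_9$: reversing the product shows $J^{\top}=-J$ and $J^2=-\Id$, and $J$ is central in $\Cl^0(10)$, so it yields the identification $\RR^{32}\cong\CC^{16}$ under which $\Spin{10}$ acts $\CC$--linearly as one of the two conjugate half--spin representations $\Delta^{\pm}$, the opposite sign of $J$ giving the other. I would then take $E$ to be locally spanned by the $\PP_\al$, with the even Clifford map sending $\PP_\al\wedge\PP_\be\mapsto\PP_\al\PP_\be$ into the skew--symmetric endomorphisms. The decisive point for \emph{essentiality} is the behaviour of a frame of involutions under the circle factor acting by $e^{J\vartheta}$: since every $\PP_\al$ anticommutes with $J$ one finds $e^{J\vartheta}\PP_\al e^{-J\vartheta}=e^{2J\vartheta}\PP_\al=\cos(2\vartheta)\,\PP_\al+\sin(2\vartheta)\,J\PP_\al$, where $J\PP_\al=\pm\PP_0\cdots\widehat{\PP_\al}\cdots\PP_9$ is again a symmetric involution but a degree--nine Clifford element, hence lies outside the real span $\langle\PP_0,\dots,\PP_9\rangle$. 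Thus the circle drags any local frame of involutions off $E$, so no local Clifford system of involutions can be patched over $\EIII$ by special orthogonal transformations alone, which is exactly what it means for $E$ to be essential. I expect this to be the main obstacle, since it requires isolating the genuinely acting $\U{1}$ inside the holonomy and ruling out every involutive refinement, not merely the obvious one, while still exhibiting $E$ as a globally defined object.

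For the global statement I would use the reduction of the structure group of $T\,\EIII$ to $\Spin{10}\cdot\U{1}\subset\U{16}$. The same computation gives $e^{J\vartheta}\PP_\al\PP_\be\,e^{-J\vartheta}=\PP_\al\PP_\be$, so although the individual involutions are only local, the even products $\PP_\al\PP_\be$ are invariant under the circle and transform by $\SO{10}$--conjugation under a change of local frame of $E$; consequently the associated $2$--forms $\psi_{\al\be}(X,Y)=g(\PP_\al\PP_\be X,Y)$, arranged in the skew--symmetric matrix $\psi^D=(\psi_{\al\be})_{0\le\al,\be\le9}$, are well defined up to $\SO{10}$--conjugation. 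Finally, the coefficients of the characteristic polynomial of $\psi^D$ are $\SO{10}$--invariant polynomials in its entries and hence independent of the local frame of $E$; in particular the fourth one, $\tau_4(\psi^D)$, is a globally defined differential $8$--form. Because the even Clifford structure is parallel for the Levi--Civita connection of the symmetric space $\EIII$ and $\tau_4$ is invariant, the form $\Phi_{\Spin{10}}=\tau_4(\psi^D)$ is parallel, therefore closed, as asserted.
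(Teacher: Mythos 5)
Your first paragraph and your last paragraph are essentially sound. The tensor-product construction of $C_9=(\PP_0,\dots,\PP_9)$ is equivalent to the paper's \eqref{top32}; obtaining the complex structure as the central volume element $J=\PP_0\cdots\PP_9$ of $\Cl^0(10)$ and reading off the half-spin representation from it is a clean route (the paper instead builds $\liespin{10}\subset\liesu{16}$ by explicit matrices $J_{0\alpha}=i\,\I_\alpha$ and quotes Bryant--Meinrenken); and the frame-independence of $\tau_4(\psi^D)$ under $\SO{10}$-conjugation together with circle-invariance of the products $\PP_\alpha\PP_\beta$ does give a global $8$-form, closed because invariant forms on a symmetric space are parallel (the paper gets closedness differently, via the Moroianu--Semmelmann identification $\Omega_{\alpha\beta}=\kappa\,\psi_{\alpha\beta}$ of the local K\"ahler forms with curvature forms, i.e.\ by Chern--Weil theory, which it then reuses to prove Theorem~\ref{main}).

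The genuine gap is in your middle step: you take $E=\langle\PP_0,\dots,\PP_9\rangle$, the span of the Clifford system itself. On flat $\RR^{32}$ this is, by the paper's very definition, \emph{non-essential}: it is globally spanned by a Clifford system of involutions. So the flat-space assertion of the theorem is not proved --- for your choice of $E$ it is false. The essential structure in the paper is a \emph{different} subspace of $\End{\CC^{16}}$, namely $E=\langle\I_0\rangle\oplus\langle\I_1,\dots,\I_9\rangle$, where $\I_0=i$ is the complex structure and $\I_1,\dots,\I_9$ are the $\Spin{9}$ involutions acting $\CC$-linearly on $\CC\otimes\OO^2$; its $\Lambda^2$ maps onto $\liespin{10}\subset\liesu{16}$ via $\I_0\wedge\I_\alpha\mapsto i\,\I_\alpha$, which is where the half-spin representation actually lives. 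Essentiality of \emph{this} $E$ is exactly the content of Proposition~\ref{pr:ind}: any spanning family of ten anticommuting $g$-symmetric involutions would consist of elements of $\U{16}$, and the trace/counting argument ($45+120+210=375$ mutually orthogonal complex structures versus $\dim\lieu{16}=256$) rules that out. Nothing in your proposal plays the role of this proposition, and you yourself flag (``ruling out every involutive refinement'') that your computation does not.

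Moreover, your circle computation $e^{J\vartheta}\PP_\alpha e^{-J\vartheta}=\cos(2\vartheta)\PP_\alpha+\sin(2\vartheta)J\PP_\alpha\notin E$ cuts the other way: it shows that conjugation by the structure group $\Spin{10}\cdot\U{1}$ does \emph{not} preserve your $E$, so its local representatives disagree on overlaps and it does not descend to any well-defined sub-bundle of $\End{T\,\EIII}$, homogeneous or otherwise (parallel transport realizes the full holonomy, so no choice of local frames can repair this). Thus your argument establishes ``this $E$ does not globalize,'' not ``this globally defined $E$ is essential''; the two statements are conflated in your second paragraph. The paper's $E$ escapes this because it consists of $\CC$-linear endomorphisms, on which conjugation by the $\U{1}$ factor (complex scalars) is trivial, with $\I_0=i$ global and only the nine involutions $\I_1,\dots,\I_9$ local; that is the object to which the matrix $\psi^D$, the form $\Phi_{\Spin{10}}=\tau_4(\psi^D)$, and the half-spin representation are attached in Theorem~\ref{half-spin}.
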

%We prove the following:
%The following is the Main Theorem of this paper:
%\begin{te}\label{half-spin} The two (non isomorphic) half-spin representations of the compact group $\Spin{10}$ in $\RR^{32}$ define a Clifford system $(\PP_0, \dots , \PP_9)$ in $\RR^{32}$ and a Clifford structure $E \subset \mathrm{End}\; \CC^{16}$, respectively. This latter extends to the Hermitian symmetric space $\EIII$, and it defines on it skew-symmetric matrices
%\[
%\psi^D = (\psi_{\alpha \beta})_{0 \leq \alpha, \beta \leq 9}
%\]
%of local differential 2-forms, whose fourth coefficient $\tau_4$ of the characteristic polynomial gives rise to a global differential 8-form
%\[
%\Phi_{\Spin{10}} = \tau_4(\psi^D).
%\]
%\end{te}

Next, we look at $\EIII$ in another aspect, namely as a smooth complex projective algebraic variety. In this respect $\EIII$ has been called the \emph{fourth Severi variety}. This term refers more generally to the possibility of defining projective planes over four complex composition algebras, namely over $\CC \otimes \RR$, $\CC \otimes \CC$, $\CC \otimes \HH$, $\CC \otimes \OO$, and embedded in complex projective spaces of suitable dimension. One can in fact construct in a unified way (cf.\ \cite{LaMPGF}) projective planes over the four listed composition algebras, and get in this way the \emph{four Severi varieties} $V_{(1)},V_{(2)},V_{(3)},V_{(4)}$ as smooth complex projective varieties respectively in $\CC P^5$, $\CC P^8$, $\CC P^{14}$, $\CC P^{26}$. Both the ambient spaces and the Severi varieties can be seen as projectified objects, the former of the Jordan algebra of Hermitian matrices, and the latter of their sets of rank one matrices. Further informations on the Severi varieties will be given in Section \ref{Severi}.

In Section \ref{Rosenfeld} we prove our main result:

\begin{te}\label{main} Let $\omega$ be the K\"ahler form and let $\Phi_{\Spin{10}} = \tau_4(\psi^D)$ be the 8-form on $\EIII$ defined in Theorem\ \ref{half-spin}. Then:

(i) The de Rham cohomology algebra $H^*(\EIII)$ is generated by (the classes of) $\omega \in \Lambda^2$ and $\Phi_{\Spin{10}} \in \Lambda^8$.

(ii) By looking at $\EIII$ as the fourth Severi variety $V_{(4)} \subset \CC P^{26}$, the de Rham dual of the basis represented in $H^8(\EIII; \ZZ)$ by the forms $(\frac{1}{(2\pi)^4} \Phi_{\Spin{10}}, \frac{1}{(2\pi)^4} \omega^4)$ is given by the pair of algebraic cycles
\[
\Big(\CC P^4 + 3(\CC P^4)', \; \;  \CC P^4 + 5(\CC P^4)' \Big),
\]
where $\CC P^4, (\CC P^4)'$ are maximal linear subspaces, belonging to the two different families ruling a totally geodesic non-singular quadric $Q_8$ contained in $V_{(4)}$.
\end{te}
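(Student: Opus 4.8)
My plan is to let a single period computation drive both parts, after reducing part (i) to a linear–independence statement via invariant–form theory.

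For part (i) I would invoke \'E.~Cartan's theorem: on the compact symmetric space $\EIII$ the de Rham cohomology coincides with the algebra of $\Esix$-invariant forms, equivalently with the harmonic forms. Computing the Poincar\'e polynomial from the exponents of $\Esix$ and of $\Spin{10}\cdot\mathrm{U}(1)$ yields $\frac{(1-t^{18})(1-t^{24})}{(1-t^{2})(1-t^{8})}$, so that $b_2=b_4=b_6=1$, $b_8=2$, and $H^{*}(\EIII)\cong\RR[\omega,\Phi_{\Spin{10}}]$ modulo two relations (in degrees $18$ and $24$)---\emph{provided} $\omega$ and $\Phi_{\Spin{10}}$ are algebra generators. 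Since $\omega$ spans $H^2$ and $\omega^4$ spans the image in $H^8$ of products from lower degrees, the whole assertion reduces to showing that the harmonic $8$-forms $\Phi_{\Spin{10}}$ and $\omega^4$ are linearly independent in the $2$-dimensional space $H^8$.

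Next I would set up the period machinery used throughout. Viewing $\EIII=V_{(4)}\subset\CP{26}$, the non-singular quadric $Q_8$ is totally geodesic, and its two rulings furnish algebraic cycles $A=[\CP{4}]$ and $B=[(\CP{4})']$ generating $H_8(\EIII;\ZZ)\cong\ZZ^2$; the classical relation $h^{4}\cap[Q_8]=A+B$ (with $h$ the hyperplane class) records how they sit inside $V_{(4)}$. Under the de Rham pairing $H^8\times H_8\to\RR$ each $8$-class is encoded by its period vector on $(A,B)$, and since $A$ and $B$ are linear the class $\frac{1}{(2\pi)^4}\omega^4=(\tfrac{\omega}{2\pi})^4$ has period vector $(1,1)$. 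The crux is therefore the pair of periods of $\frac{1}{(2\pi)^4}\Phi_{\Spin{10}}$. Because each ruling is totally geodesic, $\Phi_{\Spin{10}}|_{\CP{4}}$ is an invariant $8$-form on $\CP{4}$, hence a constant multiple $c_A$ (resp.\ $c_B$) of its volume form; I would compute these constants at one point by restricting the matrix $\psi^D$ of Theorem~\ref{half-spin} to the complex $4$-dimensional tangent space $T_pA\subset\CC^{16}$ of the half-spin module and evaluating $\tau_4$. The two rulings correspond to two \emph{inequivalent} such subspaces---precisely where the essential nature of $E$ intervenes---so $c_A\neq c_B$. This inequality already settles part (i): the period vectors $(1,1)$ and $(c_A,c_B)$ are independent, whence $\Phi_{\Spin{10}}\notin\RR\,\omega^4$. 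For part (ii) I would then assemble the $2\times 2$ period matrix, and a short linear-algebra step identifies the de Rham dual of $\bigl(\frac{1}{(2\pi)^4}\Phi_{\Spin{10}},\,\frac{1}{(2\pi)^4}\omega^4\bigr)$, expressed in the basis $\{A,B\}$, with the stated pair $\bigl(\CP{4}+3(\CP{4})',\ \CP{4}+5(\CP{4})'\bigr)$.

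The main obstacle is the evaluation of $c_A$ and $c_B$. It demands an explicit, $\Spin{10}$-equivariant description of the two rulings' tangent spaces inside the spin representation, together with a careful expansion of $\tau_4(\psi^D)$ on each---the step for which the explicit matrices of Theorem~\ref{half-spin} (and, realistically, a symbolic computation) are indispensable, and the one that also pins down the precise integers $3$ and $5$. By contrast Cartan's isomorphism, the Poincar\'e polynomial, the intersection theory of $Q_8$, and the final inversion are all structural.
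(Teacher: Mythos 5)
Your overall architecture coincides with the paper's: part (i) is reduced (via the known two--generator presentation of $H^*(\EIII)$ --- Borel's presentation in the paper; your Cartan/exponents computation of the Poincar\'e polynomial is an equivalent route) to the linear independence of $\Phi_{\Spin{10}}$ and $\omega^4$ in the two-dimensional $H^8$, and part (ii) is reduced to the periods of $\frac{1}{(2\pi)^4}\Phi_{\Spin{10}}$ and $\frac{1}{(2\pi)^4}\omega^4$ over the two rulings $\CC P^4$, $(\CC P^4)'$ of the totally geodesic quadric $Q_8\subset V_{(4)}$. Your observation that the $\omega$-periods are $(1,1)$ because the rulings are degree-one linear subspaces is exactly the paper's computation (carried out there via the Ricci form $\rho=5\omega$ and $c_1(\CC P^4)=5h$).

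The problem is that the step you defer --- ``restrict $\psi^D$ to $T_pA$ and evaluate $\tau_4$, realistically with a symbolic computation'' --- is precisely the mathematical core of the theorem, and it is left undone. Worse, the inequality $c_A\neq c_B$, which you already need for part (i), is supported only by the remark that the two rulings are ``inequivalent subspaces''; the two families differ by an orientation choice (Remark \ref{subgrassmannians}), and this by itself does not produce distinct constants, nor any mechanism that would output the integers $3$ and $5$. The paper resolves this step conceptually rather than by pointwise evaluation, and this is the ingredient your plan has no substitute for: by the Moroianu--Semmelmann proportionality $\Omega_{\alpha\beta}=\kappa\,\psi_{\alpha\beta}$ between the local K\"ahler forms of the parallel even Clifford structure and the curvature forms of its structure bundle \cite{MoSCSR} (with the metric normalized so that $\kappa=1$ on $Q_8$), the restriction of the rank-10 structure to $Q_8\cong Gr_2(\RR^{10})$ is governed by the bundle $\gamma_2^\perp(\RR^{10})$, and Chern--Weil theory gives
\[
\tau_4\big(\psi^D\vert_{Q_8}\big)=(2\pi)^4\,p_2\big(\gamma_2^\perp(\RR^{10})\big)-4\,\omega^4,
\]
the correction $-4\,\omega^4$ coming from the terms $\sum(\psi_{\alpha\beta}\psi_{09})^2$ that survive the restriction (after checking $\psi_{0\beta}\vert_{Q_8}=0$ for $\beta=1,\dots,8$). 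The periods then follow from characteristic-class arithmetic alone: on $\CC P^4=Gr_1(\CC^5)$ one has $p_2(\gamma_1^\perp(\CC^5))=c_2^2-2c_1c_3+2c_4=h^4$, so the $\Phi$-periods are $\pm 1-4$, i.e.\ $-3$ and $-5$, the sign change between the two rulings being exactly the orientation phenomenon of Remark \ref{subgrassmannians}. In short, your proposal is a correct skeleton, identical in strategy to the paper's, but until the curvature-form/Chern--Weil identification (or an equally effective determination of how the rulings' tangent spaces sit in the half-spin module, together with the evaluation of $\tau_4$ there) is actually supplied, neither the independence claim in (i) nor the integers in the cycle pair of (ii) have been established.
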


\section{Preliminaries}\label{preliminaries}

A natural approach to $\Spin{10}$-structures is via an extension of the following notion, used in real 16-dimensional Riemannian geometry (see \cite{FriWSS} for $\Spin{9}$-manifolds, and \cite{PaPSAC} for some applications).

\begin{de}\label{defspin9}
A $\Spin{9}$-\emph{structure} on a 16-dimensional Riemannian manifold $(M,g)$ is a rank $9$ real vector bundle
\[
E^9\subset\End{TM}\rightarrow M,
\]
locally spanned by self-dual anti-commuting involutions $\I_{\alpha}: TM \rightarrow TM$. Thus
\begin{equation}\label{1}
 \qquad \qquad \qquad \qquad \qquad \qquad \qquad  \I_{\alpha}^2 = \Id,  \qquad g(\I_{\alpha} X,Y)=g(X,\I_{\alpha}Y), \qquad \qquad (\alpha=1,\dots,9),
\end{equation}
and
\begin{equation}\label{2}
\qquad \qquad \qquad \qquad \qquad \qquad \qquad \qquad \I_{\alpha}\circ\I_{\beta}=-\I_{\beta}\circ\I_{\alpha}, \qquad \qquad \qquad \qquad \qquad (\alpha\neq\beta).
\end{equation}
\end{de}

In the terminology of the Introduction, $E^9$ is a non-essential even Clifford structure, defined through the local Clifford systems $(\I_1, \dots \I_9)$. From these data one gets on $M$ the local almost complex structures
$\J_{\alpha\beta} =\I_{\alpha}\circ\I_{\beta}$,
and the $9\times 9$ skew-symmetric matrix of their K\"ahler $2$-forms
\begin{equation}\label{kae}
\psi^C=(\psi_{\alpha\beta}).
\end{equation} The differential forms $\psi_{\alpha\beta}$, $\alpha<\beta$, are thus \emph{a local system of K\"ahler $2$-forms} on the $\Spin{9}$-manifold $(M^{16},E^9)$.

On the model space $\RR^{16}$, the standard $\Spin{9}$-structure is defined by the generators $\I_1, \dots, \I_9$ of the Clifford algebra $\Cl(9)$, the endomorphisms' algebra of its $16$-dimensional spin real representation $\Delta_9 = \RR^{16} = \OO^2$.
% Definition~\ref{de:spin9structure} relates then to the following characterization of $\Spin{9}$.
Accordingly, unit vectors in $\RR^9$ can be viewed as self-dual endomorphisms
\[
w: \Delta_9 = \OO^2 \rightarrow \Delta_9 = \OO^2,
\]
and the action of $w = u + r \in S^8$ ($u \in \OO$, $r \in \RR$, $u\overline u + r^2 =1$) on pairs $(x,x') \in \OO^2$ is given by
\begin{equation}\label{HarSpC}
\left(
\begin{array}{c}
x \\
x'
\end{array}
\right)
\longrightarrow
\left(
\begin{array}{cc}
r & R_{\overline u} \\
R_u & -r
\end{array}
\right)
\left(
\begin{array}{c}
x \\
x'
\end{array}
\right),
\end{equation}
where $R_u, R_{\overline u}$ denote the right multiplications by $u, \overline u$, respectively  (cf.~\cite[page 288]{HarSpC}). The choices
\[
(w = 0,1),(0,i),(0,j),(0,k),(0,e),(0,f),(0,g),(0,h)\quad\text{and}\quad(1,0) \in S^8 \subset \OO \times \RR = \RR^9
\]
define the symmetric orthogonal endomorphisms:
\begin{equation}\label{top}
\begin{split}
\I_1=\left(
\begin{array}{c|c}
0 & \Id\\
\hline
\Id & 0
\end{array}
\right), \;
\I_2=\left(
\begin{array}{c|c}
0 & -R_i\\
\hline
R_i & 0
\end{array}
\right), \; \;
&\I_3=\left(
\begin{array}{c|c}
0 & -R_j\\
\hline
R_j & 0
\end{array}
\right), \;
\I_4=\left(
\begin{array}{c|c}
0 & -R_k\\
\hline
R_k & 0
\end{array}
\right),
\\
\I_5=\left(
\begin{array}{c|c}
0 & -R_e\\
\hline
R_e& 0
\end{array}
\right), \;
\I_6=\left(
\begin{array}{c|c}
0 & -R_f\\
\hline
R_f& 0
\end{array}
\right), \;
\I_7=&\left(
\begin{array}{c|c}
0 & -R_g\\
\hline
R_g & 0
\end{array}
\right), \;
\I_8=\left(
\begin{array}{c|c}
0 & -R_h\\
\hline
R_h & 0
\end{array}
\right), \;
\I_9=\left(
\begin{array}{c|c}
\Id & 0\\
\hline
0 & -\Id
\end{array}
\right),
\end{split}
\end{equation}
where $R_i,\dots,R_h$ are the right multiplications by the 7 unit octonions $i,\dots,h$.

The subgroup $\Spin{9} \subset \SO{16}$ is then characterized as preserving the vector space
\begin{equation}\label{eq:V9}
E^9 = <\I_1,\dots,\I_9>\subset \End{\RR^{16}},
\end{equation}
and the space $\Lambda^2\RR^{16}$ of $2$-forms in $\RR^{16}$ decomposes under $\Spin{9}$ as
\begin{equation}\label{decomposition}
\Lambda^2\RR^{16} = \Lambda^2_{36} \oplus \Lambda^2_{84}
\end{equation}
(cf.~\cite[page 146]{FriWSS}), where  $\Lambda^2_{36} \cong \liespin{9}$ and $ \Lambda^2_{84}$ is an orthogonal complement in $\Lambda^2 \cong \lieso{16}$. Bases of the two subspaces are given by the $36$ compositions
\[
\J_{\alpha \beta} = \I_\alpha \I_\beta,
\]
with $\alpha <\beta$, and by the $84$ compositions
\[
\J_{\alpha \beta \gamma} = \I_\alpha \I_\beta \I_\gamma,
\]
$\alpha <\beta<\gamma$, all complex structures on $\RR^{16}$.
We will need the explicit matrices $\J_{\alpha\beta}$.
By using the notation $R_{uv} = R_u \circ R_v$, $u,v \in \OO$, we can arrange their list into the following three families:
\begin{footnotesize}
\begin{equation}\label{eq:J1}
\begin{aligned}
\J_{12}&=\left(
\begin{array}{c|c}
\RO_i & 0 \\ \hline
0 & -\RO_i
\end{array}
\right),\thinspace &
\J_{13}&=\left(
\begin{array}{c|c}
\RO_j & 0 \\ \hline
0 & -\RO_j
\end{array}
\right),\thinspace &
\J_{14}&=\left(
\begin{array}{c|c}
\RO_k & 0 \\ \hline
0 & -\RO_k
\end{array}
\right),\thinspace &
\J_{15}&=\left(
\begin{array}{c|c}
\RO_e & 0 \\ \hline
0 & -\RO_e
\end{array}
\right),
\\
\J_{16}&=\left(
\begin{array}{c|c}
\RO_f & 0 \\ \hline
0 & -\RO_f
\end{array}
\right),\thinspace &
\J_{17}&=\left(
\begin{array}{c|c}
\RO_g & 0 \\ \hline
0 & -\RO_g
\end{array}
\right),\thinspace &
\J_{18}&=\left(
\begin{array}{c|c}
\RO_h & 0 \\ \hline
0 & -\RO_h
\end{array}
\right),\thinspace &
\end{aligned}
\end{equation}
\end{footnotesize}

\begin{footnotesize}
\begin{equation}\label{eq:J2}
\begin{aligned}
\J_{23}&=\left(
\begin{array}{c|c}
-\RO_{ij} & 0 \\ \hline
0 & -\RO_{ij}
\end{array}
\right), \thinspace &
\J_{24}&=\left(
\begin{array}{c|c}
-\RO_{ik} & 0 \\ \hline
0 & -\RO_{ik}
\end{array}
\right),\thinspace &
\J_{25}&=\left(
\begin{array}{c|c}
-\RO_{ie} & 0 \\ \hline
0 & -\RO_{ie}
\end{array}
\right),
\\
\J_{26}&=\left(
\begin{array}{c|c}
-\RO_{if} & 0 \\ \hline
0 & -\RO_{if}
\end{array}
\right), \thinspace &
\J_{27}&=\left(
\begin{array}{c|c}
-\RO_{ig} & 0 \\ \hline
0 & -\RO_{ig}
\end{array}
\right), \thinspace &
\J_{28}&=\left(
\begin{array}{c|c}
-\RO_{ih} & 0 \\ \hline
0 & -\RO_{ih}
\end{array}
\right),
\\
\J_{34}&=\left(
\begin{array}{c|c}
-\RO_{jk} & 0 \\ \hline
0 & -\RO_{jk}
\end{array}
\right),\thinspace &
\J_{35}&=\left(
\begin{array}{c|c}
-\RO_{je} & 0 \\ \hline
0 & -\RO_{je}
\end{array}
\right),\thinspace &
\J_{36}&=\left(
\begin{array}{c|c}
-\RO_{jf} & 0 \\ \hline
0 & -\RO_{jf}
\end{array}
\right),
\\
\J_{37}&=\left(
\begin{array}{c|c}
-\RO_{jg} & 0 \\ \hline
0 & -\RO_{jg}
\end{array}
\right),\thinspace &
\J_{38}&=\left(
\begin{array}{c|c}
-\RO_{jh} & 0 \\ \hline
0 & -\RO_{jh}
\end{array}
\right),\thinspace &
\J_{45}&=\left(
\begin{array}{c|c}
-\RO_{ke} & 0 \\ \hline
0 & -\RO_{ke}
\end{array}
\right),
\\
\J_{46}&=\left(
\begin{array}{c|c}
-\RO_{kf} & 0 \\ \hline
0 & -\RO_{kf}
\end{array}
\right), \thinspace &
\J_{47}&=\left(
\begin{array}{c|c}
-\RO_{kg} & 0 \\ \hline
0 & -\RO_{kg}
\end{array}
\right),\thinspace &
\J_{48}&=\left(
\begin{array}{c|c}
-\RO_{kh} & 0 \\ \hline
0 & -\RO_{kh}
\end{array}
\right),
\\
\J_{56}&=\left(
\begin{array}{c|c}
-\RO_{ef} & 0 \\ \hline
0 & -\RO_{ef}
\end{array}
\right),\thinspace &
\J_{57}&=\left(
\begin{array}{c|c}
-\RO_{eg} & 0 \\ \hline
0 & -\RO_{eg}
\end{array}
\right), \thinspace &
\J_{58}&=\left(
\begin{array}{c|c}
-\RO_{eh} & 0 \\ \hline
0 & -\RO_{eh}
\end{array}
\right),\\
\J_{67}&=\left(
\begin{array}{c|c}
-\RO_{fg} & 0 \\ \hline
0 & -\RO_{fg}
\end{array}
\right),\thinspace &
\J_{68}&=\left(
\begin{array}{c|c}
-\RO_{fh} & 0 \\ \hline
0 & -\RO_{fh}
\end{array}
\right),\thinspace &
\J_{78}&=\left(
\begin{array}{c|c}
-\RO_{gh} & 0 \\ \hline
0 & -\RO_{gh}
\end{array}
\right),
\end{aligned}
\end{equation}
\end{footnotesize}

\begin{footnotesize}
\begin{equation}\label{eq:J3}
\begin{aligned}
\J_{19}&=\left(
\begin{array}{c|c}
0 & -\Id \\ \hline
\Id & 0
\end{array}
\right),\thinspace &
\J_{29}&=\left(
\begin{array}{c|c}
0 & \RO_i \\ \hline
\RO_i & 0
\end{array}
\right),\thinspace &
\J_{39}&=\left(
\begin{array}{c|c}
0 & \RO_j \\ \hline
\RO_j & 0
\end{array}
\right),\thinspace &
\J_{49}&=\left(
\begin{array}{c|c}
0 & \RO_k \\ \hline
\RO_k & 0
\end{array}
\right),\\
\J_{59}&=\left(
\begin{array}{c|c}
0 & \RO_e \\ \hline
\RO_e & 0
\end{array}
\right),\thinspace &
\J_{69}&=\left(
\begin{array}{c|c}
0 & \RO_f \\ \hline
\RO_f & 0
\end{array}
\right),\thinspace &
\J_{79}&=\left(
\begin{array}{c|c}
0 & \RO_g \\ \hline
\RO_g & 0
\end{array}
\right),\thinspace &
\J_{89}&=\left(
\begin{array}{c|c}
0 & \RO_h \\ \hline
\RO_h & 0
\end{array}
\right).
\end{aligned}
\end{equation}
\end{footnotesize}

To write their associated K\"ahler forms $\psi_{\alpha \beta}$, denote the coordinates in $\OO^2\cong\RR^{16}$ just by $(1,\dots,8,1',\dots,8')$. For example, the notation $-\shortform{12}$ stands for the 2-form $ -dx_1 \wedge dx_2$.
\begin{equation}\label{28}
\begin{aligned}
\psi_{12}&=(-\shortform{12}+\shortform{34}+\shortform{56}-\shortform{78})-(\enspace)\shortform{'},\qquad & \psi_{13}&=(-\shortform{13}-\shortform{24}+\shortform{57}+\shortform{68})-(\enspace)\shortform{'},
\\
\psi_{14}&=(-\shortform{14}+\shortform{23}+\shortform{58}-\shortform{67})-(\enspace)\shortform{'},\qquad & \psi_{15}&=(-\shortform{15}-\shortform{26}-\shortform{37}-\shortform{48})-(\enspace)\shortform{'},
\\
\psi_{16}&=(-\shortform{16}+\shortform{25}-\shortform{38}+\shortform{47})-(\enspace)\shortform{'},\qquad & \psi_{17}&=(-\shortform{17}+\shortform{28}+\shortform{35}-\shortform{46})-(\enspace)\shortform{'},
\\
\psi_{18}&=(-\shortform{18}-\shortform{27}+\shortform{36}+\shortform{45})-(\enspace)\shortform{'},\qquad & \psi_{23}&=(-\shortform{14}+\shortform{23}-\shortform{58}+\shortform{67})+(\enspace)\shortform{'},
\\
\psi_{24}&=(\shortform{13}+\shortform{24}+\shortform{57}+\shortform{68})+(\enspace)\shortform{'},\qquad & \psi_{25}&=(-\shortform{16}+\shortform{25}+\shortform{38}-\shortform{47})+(\enspace)\shortform{'},
\\
\psi_{26}&=(\shortform{15}+\shortform{26}-\shortform{37}-\shortform{48})+(\enspace)\shortform{'},\qquad &
\psi_{27}&=(\shortform{18}+\shortform{27}+\shortform{36}+\shortform{45})+(\enspace)\shortform{'},
\\
\psi_{28}&=(-\shortform{17}+\shortform{28}-\shortform{35}+\shortform{46})+(\enspace)\shortform{'},\qquad & \psi_{34}&=(-\shortform{12}+\shortform{34}-\shortform{56}+\shortform{78})+(\enspace)\shortform{'},
\\
\psi_{35}&=(-\shortform{17}-\shortform{28}+\shortform{35}+\shortform{46})+(\enspace)\shortform{'},\qquad & \psi_{36}&=(-\shortform{18}+\shortform{27}+\shortform{36}-\shortform{45})+(\enspace)\shortform{'},
\\
\psi_{37}&=(+\shortform{15}-\shortform{26}+\shortform{37}-\shortform{48})+(\enspace)\shortform{'},\qquad & \psi_{38}&=(\shortform{16}+\shortform{25}+\shortform{38}+\shortform{47})+(\enspace)\shortform{'},
\\
\psi_{45}&=(-\shortform{18}+\shortform{27}-\shortform{36}+\shortform{45})+(\enspace)\shortform{'},\qquad & \psi_{46}&=(\shortform{17}+\shortform{28}+\shortform{35}+\shortform{46})+(\enspace)\shortform{'},
\\
\psi_{47}&=(-\shortform{16}-\shortform{25}+\shortform{38}+\shortform{47})+(\enspace)\shortform{'},\qquad & \psi_{48}&=(\shortform{15}-\shortform{26}-\shortform{37}+\shortform{48})+(\enspace)\shortform{'},
\\
\psi_{56}&=(-\shortform{12}-\shortform{34}+\shortform{56}+\shortform{78})+(\enspace)\shortform{'},\qquad & \psi_{57}&=(-\shortform{13}+\shortform{24}+\shortform{57}-\shortform{68})+(\enspace)\shortform{'},
\\
\psi_{58}&=(-\shortform{14}-\shortform{23}+\shortform{58}+\shortform{67})+(\enspace)\shortform{'},\qquad & \psi_{67}&=(\shortform{14}+\shortform{23}+\shortform{58}+\shortform{67})+(\enspace)\shortform{'},
\\
\psi_{68}&=(-\shortform{13}+\shortform{24}-\shortform{57}+\shortform{68})+(\enspace)\shortform{'},\qquad &
\psi_{78}&=(\shortform{12}+\shortform{34}+\shortform{56}+\shortform{78})+(\enspace)\shortform{'},
\end{aligned}
\end{equation}
where $(\enspace)\shortform{'}$ is for $\shortform{'}$ of what appears before it, for instance
\begin{equation}
\psi_{12}=(-\shortform{12}+\shortform{34}+\shortform{56}-\shortform{78})-(-\shortform{1'2'}+\shortform{3'4'}+\shortform{5'6'}-\shortform{7'8'}).
\end{equation}
Moreover:
\begin{equation}\label{8}
\begin{aligned}
\psi_{19}&=-\shortform{11'}-\shortform{22'}-\shortform{33'}-\shortform{44'}-\shortform{55'}-\shortform{66'}-\shortform{77'}-\shortform{88'},
\\
\psi_{29}&=-\shortform{12'}+\shortform{21'}+\shortform{34'}-\shortform{43'}+\shortform{56'}-\shortform{65'}-\shortform{78'}+\shortform{87'},
\\
\psi_{39}&=-\shortform{13'}-\shortform{24'}+\shortform{31'}+\shortform{42'}+\shortform{57'}+\shortform{68'}-\shortform{75'}-\shortform{86'},
\\
\psi_{49}&=-\shortform{14'}+\shortform{23'}-\shortform{32'}+\shortform{41'}+\shortform{58'}-\shortform{67'}+\shortform{76'}-\shortform{85'},
\\
\psi_{59}&=-\shortform{15'}-\shortform{26'}-\shortform{37'}-\shortform{48'}+\shortform{51'}+\shortform{62'}+\shortform{73'}+\shortform{84'},
\\
\psi_{69}&=-\shortform{16'}+\shortform{25'}-\shortform{38'}+\shortform{47'}-\shortform{52'}+\shortform{61'}-\shortform{74'}+\shortform{83'},
\\
\psi_{79}&=-\shortform{17'}+\shortform{28'}+\shortform{35'}-\shortform{46'}-\shortform{53'}+\shortform{64'}+\shortform{71'}-\shortform{82'},
\\
\psi_{89}&=-\shortform{18'}-\shortform{27'}+\shortform{36'}+\shortform{45'}-\shortform{54'}-\shortform{63'}+\shortform{72'}+\shortform{81'}.
\end{aligned}
\end{equation}
All of this, via invariant polynomials, allows to get global differential forms on manifolds $M^{16}$.

The following is proved in \cite{PaPSAC}:

\begin{pr}\label{acs:7->8->9}
%(i) The families of complex structures
%\[
%J^A = \{J_{\alpha \beta}\}_{2 \leq \alpha < \beta \leq 8}, \quad J^B = \{J_{\alpha \beta}\}_{1 \leq \alpha < \beta \leq 8}, \quad \text{and} \; \; \;J^C = \{J_{\alpha \beta}\}_{1 \leq \alpha < \beta \leq 9},
%\]
%provide bases of the Lie sub-algebras $\mathfrak{spin}_\Delta(7)$, $\liespin{8}$, and $\liespin{9} \subset \mathfrak{so}(16)$, respectively.
%
%
%(ii)
Let
\[
%\psi^A=(\psi_{\alpha \beta})_{2 \leq \alpha, \beta \leq 8}, \qquad \psi^B=(\psi_{\alpha \beta})_{1 \leq \alpha, \beta \leq 8}, \qquad
\psi^C =(\psi_{\alpha \beta})_{1 \leq \alpha, \beta \leq 9}
\]
be the skew-symmetric matrix of the K\"ahler 2-forms associated with the family of complex structures $J_{\alpha \beta}$. If $\tau_2$ and $\tau_4$ denote the second and fourth coefficient of the characteristic polynomial, then:
\[
%\frac{1}{6} \tau_2(\psi^A) = \Phi_{\mathrm{Spin}_\Delta(7)}, \qquad \frac{1}{4}\tau_2(\psi^B) = \Phi_{\Spin{8}}, \qquad
\tau_2(\psi^C)=0, \qquad  \frac{1}{360} \tau_4(\psi^C) = \Phi_{\Spin{9}},
\]
where
%$ \Phi_{\mathrm{Spin}_\Delta(7)} \in \Lambda^4(\RR^{16})$ and
$ \Phi_{\Spin{9}} \in \Lambda^8(\RR^{16})$ is the canonical form associated with
%the standard diagonal $\mathrm{Spin}_\Delta(7)$ and with
the standard $\Spin{9}$-structure in $\RR^{16}$.
%In particular, the restriction of $\Phi_{\mathrm{Spin}_\Delta(7)}$ to any of the two summands $\RR^{16}=\RR^8 \oplus \RR^8$ is the usual $\Spin{7}$ form.
%
%
%
\end{pr}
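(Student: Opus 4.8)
The plan is to exploit the fact that the entries of $\psi^C$ are $2$-forms, hence even-degree elements that commute under wedge product, so that the characteristic polynomial $\det(t\,\Id - \psi^C)$ makes sense in the commutative algebra $\Lambda^{\mathrm{even}}(\RR^{16})$ and its coefficients $\tau_k$ are genuine differential forms. Writing them out, $\tau_2 = \sum_{\alpha<\beta}\psi_{\alpha\beta}\wedge\psi_{\alpha\beta}$ is a $4$-form, while $\tau_4$ is the sum of the principal $4\times 4$ minors, each of which is the square of a Pfaffian, so $\tau_4 = \sum_{a<b<c<d}\mathrm{Pf}_{abcd}^2$ with $\mathrm{Pf}_{abcd} = \psi_{ab}\psi_{cd} - \psi_{ac}\psi_{bd} + \psi_{ad}\psi_{bc}$, an $8$-form. (The odd coefficients vanish identically because $\psi^C$ is skew-symmetric.)

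The key structural observation is the behaviour of $\psi^C$ under $\Spin{9}$. The group acts on $E^9 = <\I_1,\dots,\I_9>$ through the vector representation $\SO{9}$, so $\I_\alpha \mapsto \sum_\gamma g_{\gamma\alpha}\I_\gamma$ for $g\in\SO{9}$; consequently $\J_{\alpha\beta}=\I_\alpha\I_\beta$ and the associated matrix of K\"ahler forms transform by conjugation, $\psi^C \mapsto g\,\psi^C\,g^{T}$, the diagonal terms $\I_\gamma^2 = \Id$ dropping out by orthogonality of $g$. Since the coefficients of a characteristic polynomial are invariant under conjugation, each $\tau_k(\psi^C)$ is a $\Spin{9}$-invariant form on $\RR^{16}$. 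I would then invoke the structure of the ring of $\Spin{9}$-invariant exterior forms on $\RR^{16}$ (Brown--Gray): it is spanned by $1$, the canonical form $\Phi_{\Spin{9}}$ in degree $8$, and $\mathrm{vol} = c\,\Phi_{\Spin{9}}^2$ in degree $16$, so that there are no invariant $4$-forms and the invariant $8$-forms form a one-dimensional space. The first fact forces $\tau_2 = 0$; the second gives $\tau_4 = \lambda\,\Phi_{\Spin{9}}$ for a single universal constant $\lambda$.

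It then remains to compute $\lambda$. Here I would use the explicit expressions \eqref{28} and \eqref{8}: writing the coordinates of $\OO^2 = \RR^{16}$ as $(1,\dots,8,1',\dots,8')$, I would extract from both sides the coefficient of a single convenient monomial, say the pure unprimed octant $\shortform{12345678}$. Only the forms $\psi_{\alpha\beta}$ with $\alpha,\beta\le 8$ contribute unprimed terms, the $\psi_{\alpha 9}$ being purely mixed, and each such $\psi_{\alpha\beta}$ splits as an unprimed $2$-form minus or plus its primed copy; selecting the unprimed part collapses the Pfaffian sum to a computation on the single octonionic factor $\OO = \RR^8$, small enough to evaluate by hand or with \Mathematica. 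Matching this coefficient against the known value of $\Phi_{\Spin{9}}$ on $\shortform{12345678}$ yields $\lambda = 360$.

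The main obstacle is twofold. Conceptually, everything hinges on the two dimension counts $\dim(\Lambda^4\RR^{16})^{\Spin{9}} = 0$ and $\dim(\Lambda^8\RR^{16})^{\Spin{9}} = 1$; if one prefers not to quote Brown--Gray, these require a genuine representation-theoretic argument, or else the direct verification that the explicit sum $\tau_2$ vanishes, which is itself a nontrivial cancellation among the forms \eqref{28} and \eqref{8}. Computationally, the labor and the real source of error lie in pinning down the exact constant $360$: the full expansion of $\tau_4$ is a sum of $\binom{9}{4}=126$ squared Pfaffians, and keeping track of the normalization of $\Phi_{\Spin{9}}$ together with all the signs in the octonionic multiplication table is where care is most needed.
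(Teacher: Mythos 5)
You cannot be compared against an in-paper argument here, because the paper gives none: Proposition \ref{acs:7->8->9} is imported from the authors' earlier work (``The following is proved in \cite{PaPSAC}''), so the only fair comparison is with that reference — whose strategy your proposal essentially reproduces. Your argument is sound. The equivariance step is correct: $\Spin{9}\subset\SO{16}$ is exactly the subgroup preserving $E^9$ in \eqref{eq:V9}, conjugation acts on $E^9$ through the vector representation $\SO{9}$, the diagonal terms $\I_\gamma^2=\Id$ drop out by orthogonality, and characteristic-polynomial coefficients of the matrix of $2$-forms (a commutative setting) are conjugation-invariant, so each $\tau_k(\psi^C)$ is a $\Spin{9}$-invariant form. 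The Brown--Gray dimension counts you quote are genuine classical facts ($(\Lambda^k\RR^{16})^{\Spin{9}}$ is nonzero only for $k=0,8,16$, each one-dimensional), and they force $\tau_2=0$ and $\tau_4=\lambda\,\Phi_{\Spin{9}}$. Your reduction for $\lambda$ is also correct: by \eqref{8} the $\psi_{\alpha 9}$ are purely mixed, so any Pfaffian involving the index $9$ contributes at least two primed differentials to its square and cannot produce $\shortform{12345678}$; the coefficient computation therefore collapses to the unprimed parts of \eqref{28} on a single factor $\OO\cong\RR^8$. The one point you should not wave at is the normalization: the equality $\lambda=360$ is meaningless until $\Phi_{\Spin{9}}$ itself is normalized, and here that means Berger's integral of Remark \ref{berger}, so the ``known value'' of $\Phi_{\Spin{9}}$ on $\shortform{12345678}$ must actually be extracted from $\int_{\OO P^1}p_l^*\nu_l\,dl$ (evaluated, say, on the line $\{(x,0)\}$, with a stated normalization of the measure $dl$); this is a finite computation, but it is a real part of the proof, not a lookup.

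It is also worth seeing why this invariant-theoretic shortcut, natural as it is for $\Spin{9}$, does not propagate to the situation this paper actually treats: for $\Spin{10}\cdot\U{1}\subset\U{16}$ acting on $\CC^{16}$ there \emph{are} invariant $4$-forms (namely multiples of $\omega^2$), and indeed the analogue proved here, Theorem \ref{tau}, asserts $\tau_2(\psi^D)=-3\omega^2\neq 0$ and must be established by direct expansion of \eqref{complex28}--\eqref{complex9} rather than by a vanishing-of-invariants argument. So your method buys a clean conceptual proof precisely because of the special poverty of $\Spin{9}$-invariants, while the paper's computational style is what survives when that poverty fails.
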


The 8-form $\Phi_{\Spin{9}}$ was originally defined by M. Berger in 1972, cf.\ \cite{BerCCP}. See also the following Remark \ref{berger}.

%We will later comment on the 4-form $ \Phi_{\Spin{8}}$ (cf.\ Remark \ref{spin8}).

\bigskip

\section{The fourth Severi variety $V_{(4)} \subset \CC P^{26}$}\label{Severi}

The following characterization of the four Severi varieties was proved by F. L. Zak in the early 1980s in the context of chordal varieties (\cite{ZakSeV,LaVTGP}). Let $V_n \subset \CC P^m$ be a smooth complex projective variety not contained in a hyperplane, and assume that its dimension $n$ satisfies $n=\frac{2}{3}(m-2)$. Then the chordal variety $\mathrm{Chord}\;  V$, locus of of all the secant and tangent lines, coincides with $\CC P^m$, unless $n=2,4,8,16$ and $V$ is one of the following projective varieties:

i) $V_{(1)} \cong \CC P^2$, the Veronese surface in $\CC P^5$,

ii) $V_{(2)} \cong \CC P^2 \times \CC P^2$, the Segre four-fold in $\CC P^8$,

iii) $V_{(3)} \cong Gr_2(\CC^6)$, the Pl\"ucker embedding of this Grassmannian in $\CC P^{14}$,

iv) $V_{(4)} \cong \EIII$, the projective plane over complex octonions as a smooth subvariety of $\CC P^{26}$.

Moreover, the lower codimension hypothesis $n> \frac{2}{3} (m-2)$ insures that $\mathrm{Chord}\;  V = \CC P^m$.

For the four mentioned exceptions, namely the Severi varieties $V_{(i)}$ $(i=1,2,3,4)$, the chordal variety $\mathrm{Chord}\;  V_{(i)}$ coincides with the cubic hypersurface $\det A =0$, i.e.\ with the variety of matrices of rank $\leq 2$ in the construction via the Jordan algebra $\mathcal H_3$ in the respective complex composition algebra.

The name for these four $V_{(i)}$ was given by Zak in recognition of a 1901 F. Severi's work \cite{SevIPD}, investigating projective surfaces with the mentioned chordal property, and characterizing in this way the Veronese surface $V_{(1)}$ of $\CC P^5$. It is notable that from Zak classification it follows that all the four Severi varieties can be looked at ``Veronese surfaces", i.e.\ at projective planes
\begin{equation}
\begin{split}
V_{(1)}=(\CC \otimes \RR) P^2 \subset \CC P^5, &\quad V_{(2)}=(\CC \otimes \CC)P^2 \subset \CC P^8, \\
V_{(3)}=(\CC \otimes \HH) P^2 \subset \CC P^{14}, &\quad V_{(4)}=(\CC \otimes \OO) P^2 \subset \CC P^{26}
\end{split}
\end{equation}
embedded in complex projective spaces via an appropriately written ``Veronese map"
\[
\qquad \qquad \qquad \qquad \qquad \qquad \qquad \qquad \qquad (x_0:x_1:x_2) \longrightarrow (\dots : x_l \bar x_m: \dots)  \qquad \qquad \qquad \qquad (0 \leq l\leq m \leq 2),
\]
where $\bar x_m$ denotes the conjugation in the second factor algebra (cf.\ \cite[Theorems 6 and 7]{ZakSeV}).

It is relevant for us that the four Severi varieties appear in the following table of ``projective planes" $(\mathbb K \otimes \mathbb K')P^2$:

\bigskip
\[
\renewcommand{\arraystretch}{1.65}
\resizebox*{1.00\textwidth}{!}{
\begin{tabular}{|c||c|c|c|c|}
\hline
$_{\mathbb K \; =} \backslash ^{\mathbb K' \; =}$  & $\mathbb R$ &$\mathbb C$&$\mathbb H$&$\mathbb O$\\
\hline\hline
{$\RR$} &{$\RR P^2$}& $\CC P^2 \cong V_2^4$ & {
$\HH P^2 $}& {$\OO P^2 \cong \Ffour/\Spin{9}$}\\
\hline
{$\CC$}&$\CC P^2 \cong V_2^4$ &$\CC P^2 \times \CC P^2 \cong V_4^6$ &$Gr_2(\CC^6) \cong V_8^{14}$ & $\Esix/ \mathrm{Spin}(10) \cdot \mathrm{U}(1)  \cong V_{16}^{78}$\\
\hline
{$\HH$} &{$\HH P^2$} & $Gr_2(\CC^6) \cong V_8^{14}$& {$Gr_4^{or}(\RR^{12})$} & {$\mathrm{E_7}/\mathrm{Spin}(12) \cdot \Sp{1} $}\\
\hline
{$\OO$} &{$\OO P^2 \cong \Ffour/ \Spin{9}$} & $\Esix/ \mathrm{Spin}(10) \cdot \mathrm{U}(1) \cong V_{16}^{78}$&{$\mathrm{E_7}/ \mathrm{Spin}(12) \cdot \Sp{1}$}&{$\mathrm{E_8}/ \mathrm{Spin}(16)^+$} \\
\hline
\end{tabular}
}
\]
\bigskip

This can be seen as an application to compact symmetric spaces of the Freudenthal magic square of Lie algebras, see e.g.\ \cite[page 193]{BaeOct}. In particular, in the $\CC$-row and the $\CC$-column of the above table we see the four Severi varieties
\[
V_{(1)}=V_2^4 \subset \CC P^5, \quad V_{(2)}=V_4^6 \subset \CC P^8, \quad V_{(3)}=V_8^{14} \subset \CC P^{14}, \quad V_{(4)}=V_{16}^{78} \subset \CC P^{26},
\]
where following the classical notations $V_n^d$ denotes a complex projective algebraic variety of dimension $n$ and degree $d$ in a $\CC P^m$.

We can also recognize how the cohomology of the first, second and third Severi variety is generated by the cohomology classes of canonical differential forms. There is of course the K\"ahler 2-form as the only generator on $V_{(1)} \cong \CC P^2$, and there are the two K\"ahler 2-forms of the factors for $V_{(2)} \cong \CC P^2 \times \CC P^2$. On $V_{(3)} \cong Gr_2(\CC^6) $ the cohomology generators are the complex K\"ahler 2-form $\omega$ and the quaternionic 4-form $\Omega$, since $Gr_2(\CC^6) $ turns out to have both a complex K\"ahler and a quaternion-K"ahler structure, with no compatibility between them. Thus one expects something similar for the fourth Severi variety $V_{(4)} \cong \EIII$, where its complex K\"ahler structure may be non-compatible with its ``octonionic" one.

Both the cohomology algebra and the Chow ring of $V_{(4)} \cong \EIII$ have been computed (see \cite{ToWICR,IlMCRC,DuZCRG}). The integral cohomology algebra has no torsion and:
\begin{equation}\label{integral}
H^*(\EIII) \cong \ZZ [a_1,a_4]/(r_{9},r_{12})
\end{equation}
where $a_1  \in H^{2}$, $a_4 \in H^8$ and $r_{\beta}$ denote relations in $H^{2\beta}$.

A CW-decomposition of $\EIII$ into Schubert cycles is described in \cite{IlMCRC,DuZCRG}. This allows to get the Chow ring of $\EIII$, whose structure is isomorphic to that of mentioned cohomology. This has been done in \cite{IlMCRC} by obtaining three generators and several relations, and in \cite{DuZCRG} has been observed that two generators suffice. The following picture, taken from \cite{IlMCRC}, describes the Schubert cycles of $\EIII$, labelled by their degrees, and their incidence relations. The complex dimension of the cycles goes from zero on the left to 16 on the right, where the full fourth Severi variety $V_{16}^{78}$ appears.
\bigskip

\begin{center}
\begin{tikzpicture}
    \tikzstyle{every node}=[draw,circle,fill=white,minimum size=7pt,
                            inner sep=0pt]

  1.1  % First, draw the inner hexagon with a ``pin'' -- namely, (3214)
    \draw (0,0) node (82) [fill=black, label=above:$\  _2$, label=right: {\tiny $\; \; ^{\boxed{Gr_2(\RR^{10}) \cong Q_8 }}$}] {}
        -- ++(315:1.1cm) node (99) [label=above:$\ _9$] {}
        -- ++(225:1.1cm) node (87) [label=above:$\ _7$] {}
        -- ++(135:1.1cm) node (72) [label=above:$\ _2$] {}
        --(82); % shape is closed, we now connect it to an outer vertex:
%        -- ++(225:2.0cm) node (62) [label=120:$\ 2 $] {};

    % Second, draw the ``outer backbone''
    \draw (72)
        -- ++(225:1.1cm) node (62) [label=above:$\ _2$] {}
              -- ++(225:1.1cm) node (52) [label=above:$\ _2$] {}
                    -- ++(135:1.1cm) node (41) [fill=black, label=above:$\ _1$,  label=left: {\tiny $^{\boxed{ \CC P^4}} \;$}]  {}
                          -- ++(225:1.1cm) node (31) [label=above:$\ _1$] {}
                                -- ++(180:1.1cm) node (21) [label=above:$\ _1$] {}
                                             -- ++(180:1.1cm) node (11) [label=above:$\ _1$] {}
                                                          -- ++(180:1.1cm) node (01) [label=above:$\ _1$] {};

    \draw (31)
                 -- ++(315:1.1cm) node (4'1) [fill=black, label=above:$\ _1$,  label=left: {\tiny $^{\boxed{(\CC P^4) '}}$}]  {}
                              -- ++(315:1.1cm) node (5'1) [label=above:$\ _1$] {}
                                           -- ++(45:1.1cm) node (63) [label=above:$\ _3$] {}
                                                      -- ++(45:1.1cm) node (75) [label=above:$\ _5$] {}
                                                                          -- ++(315:1.1cm) node (85) [label=above:$\ _5$] {}
                                                                                    -- ++(45:1.1cm) node (912) [label=above:$\ _{12}$] {}
                                                                                              -- ++(315:1.1cm) node (1012) [label=above:$\ _{12}$] {}
                                                                                                 -- ++(315:1.1cm) node (1112) [label=above:$\ _{12}$] {}
                                                                                                 -- ++(45:1.1cm) node (1245) [label=above:$\ _{45}$] {}
                                                                                                 -- ++(45:1.1cm) node (1378) [label=above:$\ _{78}$] {}
                                                                                                 -- ++(0:1.1cm) node (1478) [label=above:$\ _{78}$] {}
                                                                                                  -- ++(0:1.1cm) node (1578) [label=above:$\ _{78}$] {}
                                                                                                   -- ++(0:1.1cm) node (1678) [fill=black, label=above:$\ _{78}$, label=right: {\tiny $\; \; ^{\boxed{\EIII \cong V^{78}_{16}}}$}]{} ;

     \draw (99)
                 -- ++(315:1.1cm) node (1021) [label=above:$\ _{21}$] {}
                              -- ++(315:1.1cm) node (1133) [label=above:$\ _{33}$] {}
                                           -- ++(45:1.1cm) node (1233) [label=above:$\ _{33}$] {}
                                                      -- ++(315:1.1cm) node (1378) [label=below:$$] {} ;

%        -- ++(0:2.0cm) node (1324) [label=right:$\LD 1\ 3\ 2\ 4\RD$] {}
%        -- ++(300:2.0cm) node (3124) [label=60:$\LD 3\ 1\ 2\ 4\RD$] {}
%        -- ++(0:2.0cm) node (4123) [label=right:$\LD 4\ 1\ 2\ 3\RD$] {}
%        -- ++(300:2.0cm) node (2143) [label=right:$\LD 2\ 1\ 4\ 3\RD$] {}
%        -- ++(240:2.0cm) node (3142) [label=right:$\LD 3\ 1\ 4\ 2\RD$] {}
%        -- ++(300:2.0cm) node (1342) [label=right:$\LD 1\ 3\ 4\ 2\RD$] {}
%        -- ++(240:2.0cm) node (4312) [label=right:$\LD 4\ 3\ 1\ 2\RD$] {}
%        -- ++(180:2.0cm) node (3412) [label=-60:$\LD 3\ 4\ 1\ 2\RD$] {}
%        -- ++(240:2.0cm) node (2413) [label=right:$\LD 2\ 4\ 1\ 3\RD$] {}
%        -- ++(180:2.0cm) node (1423) [label=left:$\LD 1\ 4\ 2\ 3\RD$] {}
%        -- ++(120:2.0cm) node (3421) [label=-120:$\LD 3\ 4\ 2\ 1\RD$] {}
%        -- ++(180:2.0cm) node (4321) [label=left:$\LD 4\ 3\ 2\ 1\RD$] {}
%        -- ++(120:2.0cm) node (2341) [label=left:$\LD 2\ 3\ 4\ 1\RD$] {}
%        -- ++(60:2.0cm) node (3241) [label=left:$\LD 3\ 2\ 4\ 1\RD$] {}
%        -- ++(120:2.0cm) node (1243) [label=left:$\LD 1\ 2\ 4\ 3\RD$] {}
%        -- ++(60:2.0cm) node (4213) [label=left:$\LD 4\ 2\ 1\ 3\RD$] {}
%        -- (3214);
;
%    % Add missing ``straight'' edges
   \draw (4'1) -- (52);
   \draw (52) -- (63);
    \draw (62) -- (75);
    \draw (75) -- (87);
    \draw (87) -- (912);
    \draw (912) -- (1021);
    \draw (1012) -- (1133);
     \draw (1133) -- (1245);

\end{tikzpicture}
%\vspace{-0.2cm}
%  \[
%    \uparrow \quad \; \uparrow \quad \; \uparrow \quad \; \uparrow \quad \;  \uparrow\quad \;  \uparrow \,  \, \qquad \;  \mathrm{dim}_{\RR}  \quad  \qquad \;  \uparrow \quad \; \uparrow \quad \; \uparrow\quad \; \uparrow\quad \; \,\uparrow\quad \; \uparrow \,
%\]
%\[  _0 \quad \; _2 \quad \; _4 \quad \; _6 \quad \; {\color{red}_8} \; \; \; \; _{10}   \, \quad \qquad  \quad {\color{red}_{16}}  \quad \quad \; \, \, \quad _{22} \; \, \, \, \, \, _{24} \, \, \,\, \;_{26}  \, \, \,\,  \;_{28} \, \, \; \;_{30} \, \, \, \, \;_{32}
%\]
%
%\bigskip

\end{center}
\vspace{1cm}

The four black nodes appearing in the the diagram emphasize, besides the whole variety $\EIII \cong (\CC \otimes \OO)P^2 \cong V^{78}_{16}$, its totally geodesic ``projective line" $(\CC \otimes \OO)P^1 \cong Gr_2(\RR^{10})$, isometric to a non singular quadric $Q_8 \subset \CC P^9$. It is well known from projective geometry (see for example \cite[page 64]{SegPGA}), that even dimensional non singular quadrics admit two families of maximal linear subspaces. In the case of $Q_8$ these are two 10-dimensional families of 4-dimensional linear subspaces. Elements $\CC P^4$, $(\CC P^4)'$ of these two families appear in the diagram, where it appears that the $(\CC P^4)'$ (but not the $\CC P^4$) are extendable to 5-dimensional linear subspaces in $V^{78}_{16}$, but as mentioned non-extendable in $Q_8$.

\begin{re}\label{subgrassmannians}
\rm{The two families (both of complex dimension 10) of complex projective spaces $\CC P^4 \subset Q_8$ can be viewed also as the families of sub-Grassmannians $Gr_1(\CC^5) \subset Gr_2(\RR^{10})$ with respect to a choice of complex structures on the two families parametrized by the Hermitian symmetric space $\SO{10}/\mathrm{U}(5)$, with respect to the two possible orientations. This observation will be used in the proof of Theorem \ref{main}.}
\end{re}

\section{A Clifford system and a Lie subalgebra $\mathfrak{h} \subset \mathfrak{so}(32)$}\label{h}
%
%The discussion in the previous Section is based on Definition \ref{defspin9} of a $\Spin{9}$- structure. Accordingly, the vector bundle $V^9$, (locally) spanned by a set $\I_1,\dots,\I_9$ of involutions satisfying properties~\eqref{1} \eqref{2}, is used to construct the canonical $\Spin{9}$ invariant 8-form both on $\RR^{16}$ and on a Riemannian manifold $M^{16}$ admitting such a structure.

The construction outlined in Section \ref{preliminaries} for the canonical  8-form $\Phi_{\Spin{9}}$ can be seen in parallel with those of other canonical differential forms.

In particular, the datum of a rank 5 vector bundle $E^5 \subset \End{TM}$ over a Riemannian manifold $M^8$, locally generated by involutions $\I_1, \dots , \I_5$ satisfying properties \eqref{1}, \eqref{2}, is equivalent to the datum of an almost quaternion-Hermitian structure of $M^8$. One sees in particular that the quaternionic 4-form in real dimension 8 can be constructed from $E^5$, cf.\ \cite[page 329]{PaPSAC}. On the other hand, the vector bundles $E^5 \subset \End{TM}$ and $E^9 \subset \End{TM}$, when $M$ is respectively a Riemannian $M^8$ or $M^{16}$, are examples of even Clifford structure in the sense of \cite{MoSCSR}, and they are both non-essential, according to the definition given in the Introduction. Thus, in the mentioned examples, such an even Clifford structure is equivalent to the datum on $M^8$ or $M^{16}$ of a $\Sp{2}\cdot \Sp{1}$ or a $\Spin{9}$-structure.

It is therefore natural to inquire about the possibility of a similar approach for a $\Spin{10}$-structure on $\CC^{16}$, and again more generally on manifolds. The following Proposition shows that the same approach cannot be pursued without modification for the group $\Spin{10}$.

\begin{pr}\label{pr:ind}
The complex space $\CC^{16}$ does not admit any family of ten endomorphisms $\I_0,\dots,\I_9$, satisfying the properties \eqref{1} and \eqref{2} with respect to the standard Hermitian scalar product $g$.
\end{pr}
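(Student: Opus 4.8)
The plan is to recognize the ten operators as generators of a complex Clifford module and then rule this out by a dimension count. From property \eqref{1} I retain only the algebraic relation $\I_\alpha^2 = \Id$, while property \eqref{2} gives $\I_\alpha \I_\beta = -\I_\beta \I_\alpha$ for $\alpha \neq \beta$; together these are exactly the Clifford relations $\I_\alpha \I_\beta + \I_\beta \I_\alpha = 2\delta_{\alpha\beta}\Id$. Since by hypothesis the $\I_\alpha$ are $\CC$-linear endomorphisms of the complex space $\CC^{16}$, i.e.\ elements of $\Endcomplex{\CC^{16}} = \Mat{16}(\CC)$, the universal property of the Clifford algebra would extend the assignment of the standard generators to the $\I_\alpha$ to a unital algebra homomorphism $\rho \colon \Cl(10)\otimes_\RR\CC \longrightarrow \Endcomplex{\CC^{16}}$; equivalently, the family $(\I_0,\dots,\I_9)$ would make $\CC^{16}$ a module over the complexified Clifford algebra $\Cl(10)\otimes_\RR\CC$.

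Next I would invoke the structure of this algebra. In the even case $n=2k$ one has $\Cl(n)\otimes_\RR\CC \cong \Mat{2^k}(\CC)$, so here $\Cl(10)\otimes_\RR\CC \cong \Mat{32}(\CC)$, a simple algebra whose unique irreducible module (the complex spinors) has complex dimension $2^5 = 32$. Simplicity forces $\rho$ to be either zero or injective, and it is nonzero because $\rho$ sends each generator to $\I_\alpha$ with $\I_\alpha^2 = \Id \neq 0$. Thus $\CC^{16}$ would be a nonzero module over $\Mat{32}(\CC)$, hence a direct sum of copies of the $32$-dimensional irreducible one, forcing $\dim_\CC \CC^{16}$ to be a positive multiple of $32$. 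This contradicts $\dim_\CC \CC^{16} = 16$. (One can equally note that the injective $\rho$ cannot exist on dimension grounds, as $\dim_\CC \Mat{32}(\CC) = 1024 > 256 = \dim_\CC \Mat{16}(\CC)$.)

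The step demanding the most care is keeping track of where complex-linearity enters, so as not to prove too much: the entire obstruction is the inequality $16 < 32$ between complex dimensions, and the Hermitian self-adjointness in \eqref{1} is never actually used. Indeed, if complex-linearity is dropped and $\CC^{16} \cong \RR^{32}$ is regarded only as a real Euclidean space, the analogous family \emph{does} exist — this is precisely the Clifford system $C_9 = (\PP_0,\dots,\PP_9)$ of Theorem \ref{half-spin} — because the real Clifford algebra $\Cl(10) \cong \Mat{32}(\RR)$ admits an irreducible real module of dimension exactly $32 = \dim_\RR \RR^{32}$. The argument must therefore work with the complexification throughout, and it is exactly this failure that motivates abandoning the naive extension of Definition \ref{defspin9} in favour of an even Clifford structure, in which only the even products $\I_\alpha \I_\beta$ act $\CC$-linearly on $\CC^{16}$ through a half-spin representation of $\Spin{10}$.
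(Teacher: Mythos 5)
Your proof is correct, and it takes a genuinely different route from the paper's. The paper argues by hand: assuming the ten involutions exist, it uses the self-adjointness in \eqref{1} together with trace identities to show that the $45+120+210=375$ compositions $\J_{\alpha\beta}$, $\J_{\alpha\beta\gamma}$, $\J_{\alpha\beta\gamma\delta\epsilon\zeta}$ of two, three and six of the involutions are pairwise orthogonal complex structures, hence linearly independent in the space of skew-Hermitian endomorphisms of $\CC^{16}$, whose real dimension is only $256$ --- a contradiction. You instead let representation theory do the work: the relations alone make $\CC^{16}$ a unital module over $\Cl(10)\otimes_\RR\CC\cong\Mat{32}(\CC)$, and since every finite-dimensional module over this simple algebra has complex dimension a multiple of $32$ (equivalently, a unital homomorphism $\Mat{32}(\CC)\to\Endcomplex{\CC^{16}}$ would have to be injective, impossible since $1024>256$), no such family can exist. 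Your argument is shorter, pinpoints the exact obstruction ($16$ is not a multiple of $32$), and shows that the self-adjointness half of hypothesis \eqref{1} is never needed --- only $\CC$-linearity and the Clifford relations matter; moreover your closing observation, that over $\RR$ the same relations \emph{are} realized by the Clifford system $C_9$ in $\RR^{32}$ because $\Cl(10)\cong\Mat{32}(\RR)$ in the convention where generators square to $+\Id$, is exactly the consistency check the paper itself makes after the table of $\delta(m)$ in Section \ref{h}, warning that $C_9$ does not contradict Proposition \ref{pr:ind} because its elements cannot all lie in $\U{16}$. What the paper's computation buys in exchange is self-containedness: it needs no classification of complex Clifford algebras and no Artin--Wedderburn theory, only trace manipulations in the same concrete, matrix-level style as the constructions in the rest of the paper.
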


\begin{proof} Assume that $\I_0,\dots,\I_9$ are involutions on $\CC^{16}$ satisfying \eqref{1} and \eqref{2}. Note that any such $\I_\alpha$ lies necessarily in $\mathrm{U}(16)$. To show the non-existence on $\CC^{16}$ of such a datum, we will see that the set of all compositions
\begin{equation}\label{375}
\begin{split}
\J_{\alpha\beta} = \I_\alpha\I_\beta  \; (\alpha<\beta), \quad \J_{\alpha\beta\gamma} = \I_\alpha\I_\beta\I_\gamma  \; (\alpha<\beta<\gamma), \\
\J_{\alpha\beta\gamma\delta\epsilon\zeta} = \I_\alpha\I_\beta\I_\gamma \I_\delta \I_\epsilon \I_\zeta \; (\alpha<\beta<\gamma<\delta<\epsilon<\zeta) \; \;
\end{split}
\end{equation}
of two, three and six such involutions would give rise to linearly independent complex structures on $\CC^{16}$. In fact, counting their number, we would obtain in this way 45+120+210=375 linearly independent complex structures. But the dimension of the space $\Lambda^{1,1}$ of orthogonal complex structures on $\CC^{16}$ is only $256$.

Going into some details, it is an easy consequence of relations \eqref{1} and \eqref{2} that any composition of 2, 3 or 6 different involutions among $\I_0,\dots,\I_9$ is a complex structure, whereas the composition of 4 or 5 of them is an involution. With this in mind, we can show that the complex structures listed in \eqref{375} are mutually orthogonal. To see this, and as a first observation, we get immediately that $\tr{\J^*_{\alpha\beta}\J_{\gamma\delta}}=\tr{\I_\beta\I_\alpha\I_\gamma\I_\delta}=0$ if any of $\gamma < \delta$ equals any of $\alpha <\beta$. But also $\tr{\J^*_{\alpha\beta}\J_{\gamma\delta}}=\tr{\I_\beta\I_\alpha\I_\gamma\I_\delta}=0$ if  $\alpha\neq\gamma , \delta$ and $\beta\neq \gamma , \delta$, since we are here composing the skew symmetric endomorphism $\J_{\beta\alpha\gamma}$ with the symmetric $\I_\delta$. Thus, any pair in the first family $\{\J_{\alpha\beta}\}$ with $\alpha <\beta$ listed in \eqref{375} is given by orthogonal complex structures. Similarly, one sees that any pair chosen inside the family $\{\J_{\alpha\beta\gamma}\}$, for $\alpha<\beta<\gamma$, or inside the family $\{\J_{\alpha\beta\gamma\delta\epsilon\zeta}\} $, for $\alpha<\beta<\gamma<\delta<\epsilon<\zeta$, is given also by orthogonal complex structures.

It remains to be shown that complex structures chosen in different families among the three listed in \eqref{375} are mutually orthogonal. We do this for $\J_{\alpha\beta} $ and $\J_{\gamma\delta\epsilon}$ and without coincidences between any of the first two and any of the last three indices (when there is at least one coincidence, the orthogonality is immediate). Remind that the composition $\J_{\alpha\beta\gamma\delta\epsilon} = \J_{\alpha\beta} \circ \J_{\gamma\delta\epsilon}$ is a self adjoint involution and, if the index $\zeta$ is distinct from all the five previous ones, $\J_{\alpha\beta\gamma\delta\epsilon}$ anti-commutes with $\I_\zeta$. Thus, we are dealing with the two self adjoint anti-commuting involutions $A=\J_{\alpha\beta \gamma\delta\epsilon} $ and $B=\I_\zeta$. Let $B'=C^{-1}BC = $ diag $=(\lambda_1, \dots \lambda_{16})$ be a diagonal form of $B$ and let $A'=C^{-1}AC$. Since $B'^2=B^2$ is the identity matrix, we have $\lambda_1^2 = \dots = \lambda_{16}^2=1$. On the other hand, from $AB=-BA$ we get $A'B'=-B'A'$, so that the diagonal entries of $A'$ are all zero. It follows that the trace of $A=\J_{\alpha\beta \gamma\delta\epsilon}$ is zero, and thus $\tr{\J^*_{\alpha\beta} \J_{\gamma\delta \epsilon}}=0$, showing that $\J_{\alpha\beta} $ and $\J_{\gamma\delta\epsilon}$ are orthogonal. The other possibilities of two complex structures in different families listed in \eqref{375} are treated in a similar way.
\end{proof}

%\begin{re}\label{cliffordsystem}
Both definitions of a $\Sp{2} \cdot \Sp{1}$-structure and of a $\Spin{9}$-structure (cf.\ Definition \ref{defspin9} and the discussion at the very beginning of this Section) fit in the framework of the so-called \emph{Clifford systems} (see \cite{FKMCNI,RadCAN,GoRHCC}).
These are sets $C=(P_0, \dots,P_m)$ of symmetric transformations in a Euclidean real vector space $\RR^N$ such that $P_\alpha^2 = \Id$ for all $\alpha$ and $P_\alpha P_\beta+P_\beta P_\alpha=0$ for all $\alpha \neq \beta$. One can then show that a Clifford system exists in a $\RR^N$ if and only if $N=2k\delta(m)$, where $k$ is a positive integer and $\delta(m)$ is given by:

\medskip
\renewcommand{\arraystretch}{1.85}
\begin{center}
\medskip
\begin{tabular}{|p{.65in}  ||c|c|c|c|c|c|c|c|c|c|}
\hline
$\quad \; \; m$&$1$&$2$&$3$&$4$&$5$&$6$&$7$&$8$&$8+h$\\
\hline
$\; \; \; \; \delta(m)$&$1$&$2$&$4$&$4$&$8$&$8$&$8$&$8$&$16\delta(h)$\\
\hline
\end{tabular}
\end{center}
\bigskip

\noindent When $k=1$ the Clifford system is said to be \emph{irreducible}. Thus, for $m=8$ and for $m=4$, an irreducible Clifford system defines a $\Spin{9}$ and a $\Sp{2} \cdot \Sp{1}$ structure in $\RR^{16}$ and in $\RR^8$, respectively. The prototype example of an irreducible Clifford system is, for $m=2$ and in $\RR^4 \equiv \CC^2$, the set of the three Pauli matrices
\[
P_0=\left(
\begin{array}{rr}
0 & 1\\
1 & 0
\end{array} \right), \;\;P_1= \left(
\begin{array}{rr}
0& i\\
-i&0
\end{array}\right),\; \; P_2 = \left(
\begin{array}{rr}
1& 0\\
0 & -1
\end{array}\right) \; \in \; \mathrm{U}(2).
\]

\noindent The former table foresees the existence of an irreducible Clifford system with $m=9$ in the vector space $\RR^{32}$. Be careful that this does not contradict Proposition \ref{pr:ind}, stating that such a Clifford system cannot be chosen with all elements in $\U{16}$. To write a Clifford system $C_9=(\PP_0, \PP_1, \dots , \PP_9)$ in $\RR^{32}$, one can imitate the procedure that allows to pass from $C_4$ to $C_8$, i.e.\ from a $\Sp{2}\cdot \Sp{1}$ to a $\Spin{9}$ structure. This gives the following symmetric matrices in $\SO{32}$:
\begin{equation}\label{top32}
\begin{split}
\PP_0=\left(
\begin{array}{c|c}
0 & \Id\\
\hline
\Id & 0
\end{array}
\right), \;
\PP_1=\left(
\begin{array}{c|c}
0 & -J_{12}\\
\hline
J_{12}& 0
\end{array}
\right)&, \;
\PP_2=\left(
\begin{array}{c|c}
0 & -J_{13}\\
\hline
J_{13} & 0
\end{array}
\right), \;
\PP_3=\left(
\begin{array}{c|c}
0 & -J_{14}\\
\hline
J_{14}& 0
\end{array}
\right),
\\
\PP_4=\left(
\begin{array}{c|c}
0 & -J_{15}\\
\hline
J_{15} & 0
\end{array}
\right), \;
\PP_5=&\left(
\begin{array}{c|c}
0 & -J_{16}\\
\hline
J_{16}& 0
\end{array}
\right), \;
\PP_6=\left(
\begin{array}{c|c}
0 & -J_{17}\\
\hline
J_{17}& 0
\end{array}
\right),
\\
\PP_7=\left(
\begin{array}{c|c}
0 & -J_{18}\\
\hline
J_{18}& 0
\end{array}
\right), \;
\PP_8=&\left(
\begin{array}{c|c}
0 & -J_{19}\\
\hline
J_{19} & 0
\end{array}
\right), \;
\PP_9=\left(
\begin{array}{c|c}
\Id & 0\\
\hline
0 & -\Id
\end{array}
\right),
\end{split}
\end{equation}
where the $J_{1\beta}$ are the ones defined in \eqref{eq:J1}, \eqref{eq:J3}. It is immediately checked that $\PP_\alpha^2=\Id$ and $\PP_\alpha \PP_\beta = - \PP_\beta \PP_\alpha$ for $\alpha \neq \beta$.

The following complex structures $P_{\alpha \beta} = \PP_\alpha \circ \PP_\beta$ ($\alpha <\beta$) in $\RR^{32}$ generate a Lie subalgebra $\mathfrak{h} \subset \mathfrak{so}(32)$. We will see in a moment that $\mathfrak{h} \cong \liespin{10} \subset \liesu{16} \subset \mathfrak{so}(32)$. The 45 $P_{\alpha \beta}$ can be split into the following three families of respectively $8, 28$ and $9$ skew-symmetric matrices:
\begin{footnotesize}
\begin{equation}\label{eq:P1}
P_{01}=\left(
\begin{array}{c|c}
\J_{12}& 0 \\ \hline
0 & -\J_{12}
\end{array}
\right),\thinspace
P_{02}=\left(
\begin{array}{c|c}
\J_{13}& 0 \\ \hline
0 & -\J_{13}
\end{array}
\right),\thinspace
\dots \dots ,\thinspace
P_{08}=\left(
\begin{array}{c|c}
\J_{19}& 0 \\ \hline
0 & -\J_{19}
\end{array}
\right),
\end{equation}
\end{footnotesize}

\begin{footnotesize}
\begin{equation}\label{eq:P2}
P_{12}=\left(
\begin{array}{c|c}
\J_{23} & 0 \\ \hline
0 & \J_{23}
\end{array}
\right), \thinspace
P_{13}=\left(
\begin{array}{c|c}
\J_{24}& 0 \\ \hline
0 & \J_{24}
\end{array}
\right),\thinspace
\dots \dots , \thinspace
P_{78}=\left(
\begin{array}{c|c}
\J_{89}& 0 \\ \hline
0 & \J_{89}
\end{array}
\right),
\end{equation}
\end{footnotesize}

\begin{footnotesize}
\begin{equation}\label{eq:P3}
P_{09}=\left(
\begin{array}{c|c}
0 & -\Id \\ \hline
\Id & 0
\end{array}
\right),\thinspace
P_{19}=\left(
\begin{array}{c|c}
0 & \J_{12} \\ \hline
 \J_{12}& 0
\end{array}
\right),\thinspace
P_{29}=\left(
\begin{array}{c|c}
0 &  \J_{13}\\ \hline
 \J_{13}& 0
\end{array}
\right),\thinspace
\dots \dots , \thinspace
P_{89}=\left(
\begin{array}{c|c}
0 & \J_{19}\\ \hline
 \J_{19}& 0
\end{array}
\right).
\end{equation}
\end{footnotesize}

Note that by construction the vector space generated by all these $P_{\alpha\beta}$ is a Lie subalgebra $\mathfrak{h}$ of $\lieso{32}$.

%it is sufficient to look at the involutions $\I_1, \dots , \I_9$ as elements of the unitary group $\U{16} \subset \SO{32}$, and add to them the further unitary matrix
%\[
%\I_0 = \left(
%\begin{array}{c|c}
%0&  -Skew (i) \\
%\hline
%Skew (i)&0
%\end{array}\right) \quad
%\text{where} \quad Skew(i) = \scriptsize{ \left(
%\begin{array}{cccccccc}
%0&0&0&0&0&0&0&i \\
%0&0&0&0&0&0&i&0 \\
%0&0&0&0&0&i&0&0 \\
%0&0&0&0&i&0&0&0 \\
%0&0&0&i&0&0&0&0 \\
%0&0&i&0&0&0&0&0\\
%0&i&0&0&0&0&0&0\\
%i&0&0&0&0&0&0&0
%\end{array}\right)}
%\]

% and in Section \ref{sec:charpoly}, dealing with Clifford structures.
%
%As a consequence of this and in order to enhance to $\CC^{16}$ and to $\Spin{10}$ our description of $\Spin{9}$ invariant exterior forms on $\RR^{16}$, just look at the Lie algebra level. On the other hand, recall that the first of the three Rosenfeld projective planes is
%\[
%\mathrm{E \; III = \Esix}/(\mathrm{Spin}(10)\cdot \mathrm{U}(1)),
%\]
%one of the Hermitian symmetric spaces, thus a K\"ahler manifold. Accordingly, the setting we will have eventually to deal with is that of $\Spin{10} \cdot \mathrm{U}(1)$, the holonomy group of $\EIII.$
\section{The Lie algebra $\mathfrak{spin}(10) \subset \mathfrak{su}(16)$}\label{liespin10}

To relate the Lie algebra
\[
\mathfrak{h}= <P_{\alpha \beta}>_{0\leq \alpha < \beta \leq 9}
\]
constructed in the previous Section with the Lie algebra $\liespin{10}$, it is useful to compare it with a description of the (half-)spin representation of the group $\Spin{10}$ on $\CC^{16}$.  References for spin representations are for example \cite[Lecture 13]{PosLGL} and \cite[Chapter 3]{MeiCAL}. However, a specific (and for us convenient) excellent account to the group $\Spin{10}$ has been given by R. Bryant in the file \cite{BryRSL}. Since the representation of $\Spin{9} \subset \SO{16}$ in Bryant's notes is slightly different from the one used in R. Harvey's book \cite{HarSpC}, and since we used this latter both in our previous papers \cite{PaPSAC,PaPSWM,OPPSGO} and in the previous Sections, we need first to rephrase in our context some arguments.

At Lie algebras level, we can go from $\mathfrak{spin}(9)$ to $\mathfrak{spin}(10)$ by adding to the family $J^C = \{J_{\alpha \beta}\}_{1 \leq \alpha < \beta \leq 9}$ of 36 complex structures nine further complex structures in $\CC^{16}$. Since the spin representation of $\Spin{10}$ is on $\CC^{16}$, the new family
\[
J^D = \{J_{\alpha \beta}\}_{0 \leq \alpha < \beta \leq 9}
\]
will be of 45 complex structures on $\CC^{16}$, and a basis of $\liespin{10}$.

In the approach of \cite{BryRSL}, one first looks at $\Spin{10}$ as a subgroup of $\mathrm{Cl} (\RR \oplus \OO, <,>)$, the Clifford algebra generated by $\RR \oplus \OO$ endowed with its direct sum inner product. This algebra is isomorphic to $\Endcomplex{\CC \otimes \OO^2}$: since this latter is isomorphic to $M_{16}(\CC)$, the linear map
\[
m_{(r,v)} : \CC \otimes \OO^2 \rightarrow \CC \otimes \OO^2
\]
defined by the matrix
\[
m_{(r,v)} = i \left(
\begin{array}{cc}
r & R_{\overline v} \\
R_v & -r
\end{array}
\right)
\]
has to be, by dimensional reasons, a one-to-one onto representation, whence the claimed isomorphism
\[
\mathrm{Cl} (\RR \oplus \OO, <,>) \cong \Endcomplex{\CC \otimes \OO^2}.
\]

This allows to recognize the Lie algebra $\liespin{10}$ as:
\[
\liespin{10} = \Bigg\{ \left(
\begin{array}{rr}
a_+  + i r \Id_8 & R_{\overline u} + i R_{\overline v} \\
- R_u + i R_v & a_-  - i r \Id_8
\end{array}
\right) , \;  r\in \RR, \; u,v \in \OO,\;\ a=(a_+,a_-) \in \liespin{8} \Bigg\} .
\]

This description is consistent with obtaining $\liespin{10}$ through the datum of the nine extra complex structures $J_{01} =\I_0 \I_1, J_{02}=\I_0\I_2, \dots , J_{09}=\I_0\I_9$ to be added to the family $J^C$ of the 36 complex structures defining its Lie sub-algebra
\[
\liespin{9} = \Bigg\{ \left(
\begin{array}{rr}
a_+ & R_{\overline u}  \\
- R_u  & a_-
\end{array}
\right) , \; u \in \OO,\;\ a=(a_+,a_-) \in \liespin{8} \Bigg\} .
\]
In particular, the inclusions
\[
\liespin{9} \subset \lieso{16}, \qquad \liespin{10} \subset \liesu{16}
\]
are immediately recognized.

Observe also that, since there are no intermediate subgroups between $\Spin{9}$  and $\Spin{10}$,  the latter is generated by its subgroup $\Spin{9}$ and by the circle
\[
T= \Bigg\{ \left(
\begin{array}{cc}
e^{ir} \Id_8 &0 \\
0 & e^{  - i r } \Id_8
\end{array}
\right) , \;  r\in \RR,  \Bigg\}.
\]

Moreover, looking back to the the family $J^C$ of complex structures and their K\"ahler forms given by \eqref{28}, \eqref{8}, note that once the 8 complex structures \eqref{8} are given, one can construct from them all the remaining 28 complex structures as:
\begin{equation}
\begin{split}
J_{78} = \frac{1}{2} [J_{89},J_{79}], &
\\
J_{67} = \frac{1}{2} [J_{79},J_{69}], \; J_{68} = \frac{1}{2} [J_{89},J_{69}], &
\\
J_{56} = \frac{1}{2} [J_{69},J_{59}], \; J_{57} = \frac{1}{2} [J_{79},J_{59}], \; J_{58} = \frac{1}{2} [J_{89},J_{59}], &
\\
J_{45} = \frac{1}{2} [J_{59},J_{49}], \; J_{46} = \frac{1}{2} [J_{69},J_{49}], \; J_{47} = \frac{1}{2} [J_{79},J_{49}], \; J_{48} = \frac{1}{2} [J_{89},J_{49}], & \dots
\end{split}
\end{equation}
and so on, up to $J_{12} = \frac{1}{2} [J_{29},J_{19}], \dots, J_{18} = \frac{1}{2} [J_{89},J_{19}]$.

Thus, a coherent way to define new complex structures $J_{01}, J_{02}, \dots , J_{09}$ in $\CC^{16}$, and to obtain thus a basis of $\liespin{10}$, is given as follows:
\begin{equation}\label{9brackets}
\begin{split}
J_{09} = i \left(
\begin{array}{c|c}
\Id & 0\\
\hline
0 & -\Id
\end{array}
\right) &=i \I_9 ,
\\
J_{01} = \frac{1}{2} [J_{19},J_{09}]= i \left(
\begin{array}{c|c}
0 & \Id\\
\hline
\Id& 0
\end{array}
\right)=
i \I_1 ,  &
\qquad J_{02} = \frac{1}{2} [J_{29},J_{09}]= i \left(
\begin{array}{c|c}
0 & -R_i\\
\hline
R_i & 0
\end{array}
\right)
= i \I_2 ,
\\
J_{03} = \frac{1}{2} [J_{39},J_{09}]= i \left(
\begin{array}{c|c}
0 & -R_j\\
\hline
R_j & 0
\end{array}
\right)
= i \I_3 , &
\qquad J_{04} = \frac{1}{2} [J_{49},J_{09}]= i \left(
\begin{array}{c|c}
0 & -R_k\\
\hline
R_k & 0
\end{array}
\right)
= i \I_4 ,
\\
J_{05} = \frac{1}{2} [J_{59},J_{09}]= i \left(
\begin{array}{c|c}
0 & -R_e\\
\hline
R_e & 0
\end{array}
\right)
= i \I_5 , &
\qquad J_{06} = \frac{1}{2} [J_{69},J_{09}]= i \left(
\begin{array}{c|c}
0 & -R_f\\
\hline
R_f & 0
\end{array}
\right)
= i \I_6 ,
\\
J_{07} = \frac{1}{2} [J_{79},J_{09}]= i \left(
\begin{array}{c|c}
0 & -R_g\\
\hline
R_g & 0
\end{array}
\right)
= i \I_7 , &
\qquad J_{08}= \frac{1}{2} [J_{89},J_{09}]= i \left(
\begin{array}{c|c}
0 & -R_h\\
\hline
R_h & 0
\end{array}
\right)
=i \I_8 ,
\end{split}
\end{equation}
where in the definition of $J_{09}$ we took into account the first of the two previous observations and in the definition of $J_{01}, \dots , J_{08}$ the second one.

%It is worth to mention that in the above definition \eqref{9brackets} of  $J_{01}, J_{02}, \dots J_{09}$, the $i$ term acts on the first factor of $\CC \otimes \OO^2$ and the matrix term on its second factor.

Denote now by
\[
J^D = \{J_{\alpha \beta}\}_{0 \leq \alpha < \beta \leq 9}
\]
this family of 45 complex structures, a basis of $\liespin{10}$.

The K\"ahler 2-forms $\psi_{\alpha \beta}$ associated with the complex structures in $J^D $ can now be written. The 36 K\"ahler forms in the subfamily $J^C$ are of course deduced from those in \eqref{28}, \eqref{8}.

To complete them, use now notations $(z_1,\dots,z_8,z_{1'},\dots,z_{8'})$=$(1,\dots,8,1',\dots,8')$ for the coordinates in $\CC^{16}$, and the short forms
\[
\shortform{\alpha \bar \beta}, \qquad \shortform{\alpha\bar \beta\gamma \bar\delta}
\]
(smaller size and boldface) to denote
\[
dz_\alpha\wedge d\bar z_\beta, \qquad dz_\alpha\wedge d\bar z_\beta\wedge dz_\gamma\wedge d\bar z_\delta.
\]

Then, by reading Formulas \eqref{28} in complex coordinates, we get:

\begin{equation}\label{complex28}
\begin{aligned}
2 \psi_{12}&=(-\shortform{1\bar 2}+\shortform{2\bar 1}+\shortform{3 \bar 4}-\shortform{4\bar 3}+\shortform{5 \bar 6}-\shortform{6\bar 5}-\shortform{7 \bar 8}+\shortform{8\bar 7})-(-\shortform{1\bar 2}+\shortform{2\bar 1}+\shortform{3 \bar 4}-\shortform{4\bar 3}+\shortform{5 \bar 6}-\shortform{6\bar 5}-\shortform{7 \bar 8}+\shortform{8\bar 7})\shortform{'},
\\
2\psi_{13}&=(-\shortform{1\bar 3}+\shortform{3\bar 1}-\shortform{2 \bar 4}+\shortform{4\bar 2}+\shortform{5 \bar 7}-\shortform{7\bar 5}+\shortform{6 \bar 8}-\shortform{8\bar 6})-(-\shortform{1\bar 3}+\shortform{3\bar 1}-\shortform{2 \bar 4}+\shortform{4\bar 2}+\shortform{5 \bar 7}-\shortform{7\bar 5}+\shortform{6 \bar 8}-\shortform{8\bar 6})\shortform{'},
\\
2\psi_{14}&=(-\shortform{1\bar 4}+\shortform{4\bar 1}+\shortform{2\bar 3}-\shortform{3\bar 2}+\shortform{5\bar 8}-\shortform{8\bar 5}-\shortform{6\bar 7}+\shortform{7\bar 6})-(-\shortform{1\bar 4}+\shortform{4\bar 1}+\shortform{2\bar 3}-\shortform{3\bar 2}+\shortform{5\bar 8}-\shortform{8\bar 5}-\shortform{6\bar 7}+\shortform{7\bar 6})\shortform{'},
\\
2 \psi_{15}&=(-\shortform{1\bar 5}+\shortform{5\bar 1}-\shortform{2\bar 6}+\shortform{6\bar 2}-\shortform{3\bar 7}+\shortform{7\bar 3}-\shortform{4\bar 8}+\shortform{8\bar 4})-(-\shortform{1\bar 5}+\shortform{5\bar 1}-\shortform{2\bar 6}+\shortform{6\bar 2}-\shortform{3\bar 7}+\shortform{7\bar 3}-\shortform{4\bar 8}+\shortform{8\bar 4})\shortform{'},
\\
2\psi_{16}&=(-\shortform{1\bar 6}+\shortform{6\bar 1}+\shortform{2\bar 5}-\shortform{5\bar 2}-\shortform{3\bar 8}+\shortform{8\bar 3}+\shortform{4\bar 7}-\shortform{7\bar 4})-(-\shortform{1\bar 6}+\shortform{6\bar 1}+\shortform{2\bar 5}-\shortform{5\bar 2}-\shortform{3\bar 8}+\shortform{8\bar 3}+\shortform{4\bar 7}-\shortform{7\bar 4})\shortform{'},
\\
 2\psi_{17}&=(-\shortform{1\bar 7}+\shortform{7\bar 1}+\shortform{2\bar 8}-\shortform{8\bar 2}+\shortform{3\bar 5}-\shortform{5\bar 3}-\shortform{4\bar 6}+\shortform{6\bar 4})-(-\shortform{1\bar 7}+\shortform{7\bar 1}+\shortform{2\bar 8}-\shortform{8\bar 2}+\shortform{3\bar 5}-\shortform{5\bar 3}-\shortform{4\bar 6}+\shortform{6\bar 4})\shortform{'},
\\
2\psi_{18}&=(-\shortform{1\bar 8}+\shortform{8\bar 1}-\shortform{2\bar 7}+\shortform{7\bar 2}+\shortform{3\bar 6}-\shortform{6\bar 3}+\shortform{4\bar 5}-\shortform{5\bar 4})-(-\shortform{1\bar 8}+\shortform{8\bar 1}-\shortform{2\bar 7}+\shortform{7\bar 2}+\shortform{3\bar 6}-\shortform{6\bar 3}+\shortform{4\bar 5}-\shortform{5\bar 4})\shortform{'},
\\
2 \psi_{23}&=(-\shortform{1 \bar 4} + \shortform{4 \bar 1}+\shortform{2 \bar 3} - \shortform{3 \bar 2} -\shortform{5\bar 8}+\shortform{8\bar 5}+\shortform{6\bar 7}-\shortform{7\bar 6})+(-\shortform{1 \bar 4} + \shortform{4 \bar 1}+\shortform{2 \bar 3} - \shortform{3 \bar 2} -\shortform{5\bar 8}+\shortform{8\bar 5}+\shortform{6\bar 7}-\shortform{7\bar 6})\shortform{'},
\\
2 \psi_{24}&=(+\shortform{1\bar 3} - \shortform{3 \bar 1} +\shortform{2\bar 4} - \shortform{4 \bar 2}+\shortform{5\bar 7}-\shortform{7\bar 5}+\shortform{6\bar 8}-\shortform{8\bar 6})+(+\shortform{1\bar 3} - \shortform{3 \bar 1} +\shortform{2\bar 4} - \shortform{4 \bar 2}+\shortform{5\bar 7}-\shortform{7\bar 5}+\shortform{6\bar 8}-\shortform{8\bar 6})\shortform{'},
\\
2 \psi_{25}&=(-\shortform{1 \bar 6} + \shortform{6 \bar 1} +\shortform{2\bar 5} - \shortform{5 \bar 2}+\shortform{3\bar 8}-\shortform{8\bar 3}-\shortform{4\bar 7}+\shortform{7\bar 4})+(-\shortform{1 \bar 6} + \shortform{6 \bar 1} +\shortform{2\bar 5} - \shortform{5 \bar 2}+\shortform{3\bar 8}-\shortform{8\bar 3}-\shortform{4\bar 7}+\shortform{7\bar 4})\shortform{'},
\\
2 \psi_{26}&=(+\shortform{1 \bar 5}- \shortform{5 \bar 1} +\shortform{2\bar 6} - \shortform{6 \bar 2}-\shortform{3\bar 7}+\shortform{7\bar 3}-\shortform{4\bar 8}+\shortform{8\bar 4})+(+\shortform{1 \bar 5}- \shortform{5 \bar 1} +\shortform{2\bar 6} - \shortform{6 \bar 2}-\shortform{3\bar 7}+\shortform{7\bar 3}-\shortform{4\bar 8}+\shortform{8\bar 4})\shortform{'},
\\
2 \psi_{27}&=(+\shortform{1\bar 8}- \shortform{8 \bar 1} +\shortform{2\bar 7}- \shortform{7 \bar 2}+\shortform{3\bar 6}-\shortform{6\bar 3}+\shortform{4\bar 5}-\shortform{5\bar 4})+(+\shortform{1\bar 8}- \shortform{8 \bar 1} +\shortform{2\bar 7}- \shortform{7 \bar 2}+\shortform{3\bar 6}-\shortform{6\bar 3}+\shortform{4\bar 5}-\shortform{5\bar 4})\shortform{'},
\\
2 \psi_{28}&=(-\shortform{1\bar 7}+ \shortform{7 \bar 1} +\shortform{2\bar 8}- \shortform{8 \bar 2}-\shortform{3\bar 5}+\shortform{5\bar 3}+\shortform{4\bar 6}-\shortform{6\bar 4})+(-\shortform{1\bar 7}+ \shortform{7 \bar 1} +\shortform{2\bar 8}- \shortform{8 \bar 2}-\shortform{3\bar 5}+\shortform{5\bar 3}+\shortform{4\bar 6}-\shortform{6\bar 4})\shortform{'},
\\
2 \psi_{34}&=(-\shortform{1 \bar 2}+ \shortform{2 \bar 1} +\shortform{3\bar 4}- \shortform{4 \bar 3}-\shortform{5\bar 6}+\shortform{6\bar 5}+\shortform{7\bar 8}-\shortform{8\bar 7})+(-\shortform{1 \bar 2}+ \shortform{2 \bar 1} +\shortform{3\bar 4}- \shortform{4 \bar 3}-\shortform{5\bar 6}+\shortform{6\bar 5}+\shortform{7\bar 8}-\shortform{8\bar 7})\shortform{'},
\\
2 \psi_{35}&=(-\shortform{1 \bar  7}+ \shortform{7 \bar 1} -\shortform{2\bar 8}+ \shortform{8 \bar 2}+\shortform{3\bar 5}-\shortform{5\bar 3}+\shortform{4\bar 6}-\shortform{6\bar 4})+(-\shortform{1 \bar  7}+ \shortform{7 \bar 1} -\shortform{2\bar 8}+ \shortform{8 \bar 2}+\shortform{3\bar 5}-\shortform{5\bar 3}+\shortform{4\bar 6}-\shortform{6\bar 4})\shortform{'},
\\
2 \psi_{36}&=(-\shortform{1 \bar 8}+ \shortform{8 \bar 1} +\shortform{2\bar 7}- \shortform{7 \bar 2}+\shortform{3\bar 6}-\shortform{6\bar 3}-\shortform{4\bar 5}+\shortform{5\bar 4})+(-\shortform{1 \bar 8}+ \shortform{8 \bar 1} +\shortform{2\bar 7}- \shortform{7 \bar 2}+\shortform{3\bar 6}-\shortform{6\bar 3}-\shortform{4\bar 5}+\shortform{5\bar 4})\shortform{'},
\\
2 \psi_{37}&=(+\shortform{1 \bar 5}- \shortform{5\bar 1} -\shortform{2\bar 6}+ \shortform{6 \bar 2}+\shortform{3\bar 7}-\shortform{7\bar 3}-\shortform{4\bar 8}+\shortform{8\bar 4})+(+\shortform{1 \bar 5}- \shortform{5\bar 1} -\shortform{2\bar 6}+ \shortform{6 \bar 2}+\shortform{3\bar 7}-\shortform{7\bar 3}-\shortform{4\bar 8}+\shortform{8\bar 4})\shortform{'},
\\
2 \psi_{38}&=(+\shortform{1 \bar 6}- \shortform{6 \bar 1} +\shortform{2\bar 5}-\shortform{5 \bar 2}+\shortform{3\bar 8}-\shortform{8\bar 3}+\shortform{4\bar 7}-\shortform{7\bar 4})+(+\shortform{1 \bar 6}- \shortform{6 \bar 1} +\shortform{2\bar 5}-\shortform{5 \bar 2}+\shortform{3\bar 8}-\shortform{8\bar 3}+\shortform{4\bar 7}-\shortform{7\bar 4})\shortform{'},
\\
2 \psi_{45}&=(-\shortform{1 \bar 8}+ \shortform{8 \bar 1} +\shortform{2\bar 7}-\shortform{7 \bar 2}-\shortform{3\bar 6}+\shortform{6\bar 3}+\shortform{4\bar 5}-\shortform{5\bar 4})+(-\shortform{1 \bar 8}+ \shortform{8 \bar 1} +\shortform{2\bar 7}-\shortform{7 \bar 2}-\shortform{3\bar 6}+\shortform{6\bar 3}+\shortform{4\bar 5}-\shortform{5\bar 4})\shortform{'},
\\
2 \psi_{46}&=(+\shortform{1 \bar 7}- \shortform{7 \bar 1} +\shortform{2\bar 8}-\shortform{8 \bar 2}+\shortform{3\bar 5}-\shortform{5\bar 3}+\shortform{4\bar 6}-\shortform{6\bar 4})+(+\shortform{1 \bar 7}- \shortform{7 \bar 1} +\shortform{2\bar 8}-\shortform{8 \bar 2}+\shortform{3\bar 5}-\shortform{5\bar 3}+\shortform{4\bar 6}-\shortform{6\bar 4})\shortform{'},
\\
2 \psi_{47}&=(-\shortform{1 \bar 6}+ \shortform{6 \bar 1} -\shortform{2\bar 5}+ \shortform{5 \bar 2}+\shortform{3\bar 8}-\shortform{8\bar 3}+\shortform{4\bar 7}-\shortform{7\bar 4})+(-\shortform{1 \bar 6}+ \shortform{6 \bar 1} -\shortform{2\bar 5}+ \shortform{5 \bar 2}+\shortform{3\bar 8}-\shortform{8\bar 3}+\shortform{4\bar 7}-\shortform{7\bar 4})\shortform{'},
\\
2 \psi_{48}&=(+\shortform{1 \bar 5}- \shortform{5 \bar 1} -\shortform{2\bar 6}+\shortform{6 \bar 2}-\shortform{3\bar 7}+\shortform{7\bar 3}+\shortform{4\bar 8}-\shortform{8\bar 4})+(+\shortform{1 \bar 5}- \shortform{5 \bar 1} -\shortform{2\bar 6}+\shortform{6 \bar 2}-\shortform{3\bar 7}+\shortform{7\bar 3}+\shortform{4\bar 8}-\shortform{8\bar 4})\shortform{'},
\\
2 \psi_{56}&=(-\shortform{1 \bar 2}+ \shortform{2 \bar 1} -\shortform{3\bar 4}+\shortform{4 \bar 3}+\shortform{5\bar 6}-\shortform{6\bar 5}+\shortform{7\bar 8}-\shortform{8\bar 7})+(-\shortform{1 \bar 2}+ \shortform{2 \bar 1} -\shortform{3\bar 4}+\shortform{4 \bar 3}+\shortform{5\bar 6}-\shortform{6\bar 5}+\shortform{7\bar 8}-\shortform{8\bar 7})\shortform{'},
\\
2 \psi_{57}&=(-\shortform{1 \bar 3}+ \shortform{3 \bar 1} +\shortform{2\bar 4} - \shortform{4 \bar 2}+\shortform{5\bar 7}-\shortform{7\bar 5}-\shortform{6\bar 8}+\shortform{8\bar 6})+(-\shortform{1 \bar 3}+ \shortform{3 \bar 1} +\shortform{2\bar 4} - \shortform{4 \bar 2}+\shortform{5\bar 7}-\shortform{7\bar 5}-\shortform{6\bar 8}+\shortform{8\bar 6})\shortform{'},
\\
2 \psi_{58}&=(-\shortform{1 \bar 4}+ \shortform{4 \bar 1} -\shortform{2\bar 3}+\shortform{3 \bar 2}+\shortform{5\bar 8}-\shortform{8\bar 5}+\shortform{6\bar 7}-\shortform{7\bar 6})+(-\shortform{1 \bar 4}+ \shortform{4 \bar 1} -\shortform{2\bar 3}+\shortform{3 \bar 2}+\shortform{5\bar 8}-\shortform{8\bar 5}+\shortform{6\bar 7}-\shortform{7\bar 6})\shortform{'},
\\
2 \psi_{67}&=(+\shortform{1 \bar 4}- \shortform{4 \bar 1} +\shortform{2\bar 3}-\shortform{3 \bar 2}+\shortform{5\bar 8}-\shortform{8\bar 5}+\shortform{6\bar 7}-\shortform{7\bar 6})+(+\shortform{1 \bar 4}- \shortform{4 \bar 1} +\shortform{2\bar 3}-\shortform{3 \bar 2}+\shortform{5\bar 8}-\shortform{8\bar 5}+\shortform{6\bar 7}-\shortform{7\bar 6})\shortform{'},
\\
2 \psi_{68}&=(-\shortform{1 \bar 3}+ \shortform{3 \bar 1} +\shortform{2\bar 4}-\shortform{4 \bar 2}-\shortform{5\bar 7}+\shortform{7\bar 5}+\shortform{6\bar 8}-\shortform{8\bar 6})+(-\shortform{1 \bar 3}+ \shortform{3 \bar 1} +\shortform{2\bar 4}-\shortform{4 \bar 2}-\shortform{5\bar 7}+\shortform{7\bar 5}+\shortform{6\bar 8}-\shortform{8\bar 6})\shortform{'},
\\
2 \psi_{78}&=(+\shortform{1 \bar 2}- \shortform{2 \bar 1} +\shortform{3\bar 4}-\shortform{4 \bar 3}+\shortform{5\bar 6}-\shortform{6\bar 5}+\shortform{7\bar 8}-\shortform{8\bar 7})+(+\shortform{1 \bar 2}- \shortform{2 \bar 1} +\shortform{3\bar 4}-\shortform{4 \bar 3}+\shortform{5\bar 6}-\shortform{6\bar 5}+\shortform{7\bar 8}-\shortform{8\bar 7})\shortform{'},
\end{aligned}
\end{equation}
and, from Formulas \eqref{8}:
\begin{equation}\label{complex8}
\begin{aligned}
2\psi_{19}&=(-\shortform{1\bar 1'}+\shortform{1' \bar 1}-\shortform{2\bar 2'}+\shortform{2'\bar 2} -\shortform{3\bar 3'}+\shortform{3'\bar 3} -\shortform{4\bar 4'}+\shortform{4'\bar 4} -\shortform{5\bar 5'}+\shortform{5'\bar 5}-\shortform{6\bar 6'}+\shortform{6'\bar 6}-\shortform{7\bar 7'}+\shortform{7'\bar 7}-\shortform{8\bar 8'}+\shortform{8'\bar 8}),
\\
2\psi_{29}&=(-\shortform{1\bar 2'}+\shortform{2' \bar 1}+\shortform{2\bar 1'}-\shortform{1'\bar 2} +\shortform{3\bar 4'}-\shortform{4'\bar 3} -\shortform{4\bar 3'}+\shortform{3'\bar 4} +\shortform{5\bar 6'}-\shortform{6'\bar 5}-\shortform{6\bar 5'}+\shortform{5'\bar 6}-\shortform{7\bar 8'}+\shortform{8'\bar 7}+\shortform{8\bar 7'}-\shortform{7'\bar 8}),
\\
2\psi_{39}&=(-\shortform{1\bar 3'}+\shortform{3' \bar 1}-\shortform{2\bar 4'}+\shortform{4'\bar 2} +\shortform{3\bar 1'}-\shortform{1'\bar 3} +\shortform{4\bar 2'}-\shortform{2'\bar 4} +\shortform{5\bar 7'}-\shortform{7'\bar 5}+\shortform{6\bar 8'}-\shortform{8'\bar 6}-\shortform{7\bar 5'}+\shortform{5'\bar 7}-\shortform{8\bar 6'}+\shortform{6'\bar 8}),
\\
2\psi_{49}&=(-\shortform{1\bar 4'}+\shortform{4' \bar 1}+\shortform{2\bar 3'}-\shortform{3'\bar 2} -\shortform{3\bar 2'}+\shortform{2'\bar 3} +\shortform{4\bar 1'}-\shortform{1'\bar 4} +\shortform{5\bar 8'}-\shortform{8'\bar 5}-\shortform{6\bar 7'}+\shortform{7'\bar 6}+\shortform{7\bar 6'}-\shortform{6'\bar 7}-\shortform{8\bar 5'}+\shortform{5'\bar 8}),
\\
2\psi_{59}&=(-\shortform{1\bar 5'}+\shortform{5' \bar 1}-\shortform{2\bar 6'}+\shortform{6'\bar 2} -\shortform{3\bar 7'}+\shortform{7'\bar 3} -\shortform{4\bar 8'}+\shortform{8'\bar 4} +\shortform{5\bar 1'}-\shortform{1'\bar 5}+\shortform{6\bar 2'}-\shortform{2'\bar 6}+\shortform{7\bar 3'}-\shortform{3'\bar 7}+\shortform{8\bar 4'}-\shortform{4'\bar 8}),
\\
2\psi_{69}&=(-\shortform{1\bar 6'}+\shortform{6' \bar 1}+\shortform{2\bar 5'}-\shortform{5'\bar 2} -\shortform{3\bar 8'}+\shortform{8'\bar 3} +\shortform{4\bar 7'}-\shortform{7'\bar 4} -\shortform{5\bar 2'}+\shortform{2'\bar 5}+\shortform{6\bar 1'}-\shortform{1'\bar 6}-\shortform{7\bar 4'}+\shortform{4'\bar 7}+\shortform{8\bar 3'}-\shortform{3'\bar 8}),
\\
2\psi_{79}&=(-\shortform{1\bar 7'}+\shortform{7' \bar 1}+\shortform{2\bar 8'}-\shortform{8'\bar 2} +\shortform{3\bar 5'}-\shortform{5'\bar 3} -\shortform{4\bar 6'}+\shortform{6'\bar 4} -\shortform{5\bar 3'}+\shortform{3'\bar 5}+\shortform{6\bar 4'}-\shortform{4'\bar 6}+\shortform{7\bar 1'}-\shortform{1'\bar 7}-\shortform{8\bar 2'}+\shortform{2'\bar 8}),
\\
2\psi_{89}&=(-\shortform{1\bar 8'}+\shortform{8' \bar 1}-\shortform{2\bar 7'}+\shortform{7'\bar 2} +\shortform{3\bar 6'}-\shortform{6'\bar 3} +\shortform{4\bar 5'}-\shortform{5'\bar 4} -\shortform{5\bar 4'}+\shortform{4'\bar 5}-\shortform{6\bar 3'}+\shortform{3'\bar 6}+\shortform{7\bar 2'}-\shortform{2'\bar 7}+\shortform{8\bar 1'}-\shortform{1'\bar 8}).
\end{aligned}
\end{equation}

The ``new'' K\"ahler forms $\psi_{01}, \psi_{02}, \dots, \psi_{09}$, associated with $J_{01}, J_{02}, \dots, J_{09}$, read:

\begin{equation}\label{complex9}
\begin{aligned}
2\psi_{01}&=i(+\shortform{1\bar 1'}+\shortform{1' \bar 1}+\shortform{2\bar 2'}+\shortform{2'\bar 2} +\shortform{3\bar 3'}+\shortform{3'\bar 3} +\shortform{4\bar 4'}+\shortform{4'\bar 4} +\shortform{5\bar 5'}+\shortform{5'\bar 5}+\shortform{6\bar 6'}+\shortform{6'\bar 6}+\shortform{7\bar 7'}+\shortform{7'\bar 7}+\shortform{8\bar 8'}+\shortform{8'\bar 8}),
\\
2\psi_{02}&=i(+\shortform{1\bar 2'}+\shortform{2' \bar 1}-\shortform{2\bar 1'}-\shortform{1'\bar 2} -\shortform{3\bar 4'}-\shortform{4'\bar 3} +\shortform{4\bar 3'}+\shortform{3'\bar 4} -\shortform{5\bar 6'}-\shortform{6'\bar 5}+\shortform{6\bar 5'}+\shortform{5'\bar 6}+\shortform{7\bar 8'}+\shortform{8'\bar 7}-\shortform{8\bar 7'}-\shortform{7'\bar 8}),
\\
2\psi_{03}&=i(+\shortform{1\bar 3'}+\shortform{3' \bar 1}+\shortform{2\bar 4'}+\shortform{4'\bar 2} -\shortform{3\bar 1'}-\shortform{1'\bar 3} -\shortform{4\bar 2'}-\shortform{2'\bar 4} -\shortform{5\bar 7'}-\shortform{7'\bar 5}-\shortform{6\bar 8'}-\shortform{8'\bar 6}+\shortform{7\bar 5'}+\shortform{5'\bar 7}+\shortform{8\bar 6'}+\shortform{6'\bar 8}),
\\
2\psi_{04}&=i(+\shortform{1\bar 4'}+\shortform{4' \bar 1}-\shortform{2\bar 3'}-\shortform{3'\bar 2} +\shortform{3\bar 2'}+\shortform{2'\bar 3} -\shortform{4\bar 1'}-\shortform{1'\bar 4} -\shortform{5\bar 8'}-\shortform{8'\bar 5}+\shortform{6\bar 7'}+\shortform{7'\bar 6}-\shortform{7\bar 6'}-\shortform{6'\bar 7}+\shortform{8\bar 5'}+\shortform{5'\bar 8}),
\\
2\psi_{05}&=i(-\shortform{1\bar 5'}-\shortform{5' \bar 1}-\shortform{2\bar 6'}-\shortform{6'\bar 2} +\shortform{3\bar 7'}+\shortform{7'\bar 3} +\shortform{4\bar 8'}+\shortform{8'\bar 4} +\shortform{5\bar 1'}+\shortform{1'\bar 5}+\shortform{6\bar 2'}+\shortform{2'\bar 6}-\shortform{7\bar 3'}-\shortform{3'\bar 7}-\shortform{8\bar 4'}-\shortform{4'\bar 8}),
\\
2\psi_{06}&=i(-\shortform{1\bar 6'}-\shortform{6' \bar 1}+\shortform{2\bar 5'}+\shortform{5'\bar 2} -\shortform{3\bar 8'}-\shortform{8'\bar 3} +\shortform{4\bar 7'}+\shortform{7'\bar 4} -\shortform{5\bar 2'}-\shortform{2'\bar 5}+\shortform{6\bar 1'}+\shortform{1'\bar 6}-\shortform{7\bar 4'}-\shortform{4'\bar 7}+\shortform{8\bar 3'}+\shortform{3'\bar 8}),
\\
2\psi_{07}&=i(-\shortform{1\bar 7'}-\shortform{7' \bar 1}+\shortform{2\bar 8'}+\shortform{8'\bar 2} +\shortform{3\bar 5'}+\shortform{5'\bar 3} -\shortform{4\bar 6'}-\shortform{6'\bar 4} -\shortform{5\bar 3'}-\shortform{3'\bar 5}+\shortform{6\bar 4'}+\shortform{4'\bar 6}+\shortform{7\bar 1'}+\shortform{1'\bar 7}-\shortform{8\bar 2'}-\shortform{2'\bar 8}),
\\
2\psi_{08}&=i(-\shortform{1\bar 8'}-\shortform{8' \bar 1}-\shortform{2\bar 7'}-\shortform{7'\bar 2} +\shortform{3\bar 6'}+\shortform{6'\bar 3} +\shortform{4\bar 5'}+\shortform{5'\bar 4} -\shortform{5\bar 4'}-\shortform{4'\bar 5}-\shortform{6\bar 3'}-\shortform{3'\bar 6}+\shortform{7\bar 2'}+\shortform{2'\bar 7}+\shortform{8\bar 1'}+\shortform{1'\bar 8})
,
\\
2\psi_{09}&=i(+\shortform{1\bar 1}+\shortform{2\bar 2}+\shortform{3\bar 3} +\shortform{4\bar 4}+\shortform{5\bar 5}+\shortform{6\bar 6}+\shortform{7\bar 7}+\shortform{8\bar 8} -\shortform{1'\bar 1'}-\shortform{2'\bar 2'}-\shortform{3'\bar 3'} -\shortform{4'\bar 4'}-\shortform{5'\bar 5'}-\shortform{6'\bar 6'}-\shortform{7'\bar 7'}-\shortform{8'\bar 8'}). \end{aligned}
\end{equation}

Then:

\begin{pr}\label{iso}
The Lie subalgebras $\mathfrak{h}$ and $\liespin{10}$ of $\lieso{32}$ are isomorphic.
\end{pr}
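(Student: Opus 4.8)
The plan is to exhibit an explicit isomorphism realizing the obvious index-matching of bases. Concretely I would define the linear map $\Psi\colon\mathfrak{h}\to\liespin{10}$ by $\Psi(P_{\alpha\beta})=J_{\alpha\beta}$ for every pair $0\le\alpha<\beta\le9$. Both $\{P_{\alpha\beta}\}$ and $\{J_{\alpha\beta}\}$ are bases of their respective $45$-dimensional spaces $\mathfrak{h}\subset\lieso{32}$ and $\liespin{10}$, so $\Psi$ is automatically a linear isomorphism; the entire content is to check that it preserves brackets.

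First I would compute the structure constants of $\mathfrak{h}$ in the basis $\{P_{\alpha\beta}\}$. Since $P_{\alpha\beta}=\PP_\alpha\PP_\beta$ and the $\PP_\alpha$ are anticommuting involutions (as noted just after \eqref{top32}), every bracket is governed by the universal identities $\PP_\alpha^2=\Id$ and $\PP_\alpha\PP_\beta=-\PP_\beta\PP_\alpha$. A direct manipulation leaves only three non-trivial types: $[P_{\alpha\beta},P_{\gamma\delta}]=0$ when $\{\alpha,\beta\}\cap\{\gamma,\delta\}=\emptyset$, while for a shared middle index $[P_{\alpha\beta},P_{\beta\gamma}]=2P_{\alpha\gamma}$ and for a shared first index $[P_{\alpha\beta},P_{\alpha\gamma}]=-2P_{\beta\gamma}$; these are the relations of the compact $\lieso{10}$ up to the harmless global factor $2$.

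Next I would repeat the computation on the $\liespin{10}$ side, using the descriptions in \eqref{9brackets}: $J_{\alpha\beta}=\I_\alpha\I_\beta$ for $1\le\alpha<\beta\le9$ and $J_{0\alpha}=i\,\I_\alpha$, i.e.\ one formally sets $\I_0=i\,\Id$. Now the $\I_\alpha$ with $\alpha\ge1$ are again anticommuting involutions, whereas $\I_0$ is central with $\I_0^2=-\Id$. The key observation is that in both constructions one has $\I_0X\I_0=-X$ for $X\in\{\I_1,\dots,\I_9\}$: for $\mathfrak{h}$ this comes from $\PP_0$ being an anticommuting involution (the sign arising from anticommutation together with $\PP_0^2=\Id$), while for $\liespin{10}$ it comes from $\I_0=i\,\Id$ being central with $\I_0^2=-\Id$. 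Since the index $0$ enters only through this relation, the same three formulas hold verbatim for the $J_{\alpha\beta}$: $[J_{0\alpha},J_{0\beta}]=-2J_{\alpha\beta}$, $[J_{0\alpha},J_{\alpha\gamma}]=2J_{0\gamma}$, $[J_{\alpha\beta},J_{\beta\gamma}]=2J_{\alpha\gamma}$, and vanishing on disjoint index pairs.

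Because the structure constants of $\mathfrak{h}$ and of $\liespin{10}$ thus coincide under the index-matching, $\Psi$ carries brackets of basis elements to brackets and is therefore a Lie algebra isomorphism, proving the Proposition. As a consistency check one may note that both algebras are compact of dimension $45$, hence abstractly both equal $\lieso{10}=\liespin{10}$; the point of the explicit $\Psi$ is that it identifies the two concrete copies sitting inside $\lieso{32}$, reflecting that $\RR^{32}$ (via the Clifford system $C_9$) and $\CC^{16}$ (via the half-spin representation) carry the same $\liespin{10}$-action. The only real obstacle is the sign bookkeeping in the brackets involving the index $0$: one must verify that the genuine involution $\PP_0$ and the central multiplication $\I_0=i\,\Id$ — algebraically quite different objects — produce identical structure constants, which is precisely the coincidence $\I_0X\I_0=-X$ isolated above.
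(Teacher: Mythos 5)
Your proof is correct and follows essentially the same route as the paper: both exhibit an explicit correspondence of the $45$-element bases of $\mathfrak{h}$ and $\liespin{10}$ and rely on the coincidence of structure constants. The paper happens to use the shifted matching $P_{\alpha\beta}\leftrightarrow J_{\alpha+1,\beta+1}$ ($0\leq\alpha<\beta\leq 8$), $P_{\alpha 9}\leftrightarrow J_{0,\alpha+1}$, rather than your identity matching, but the two differ only by an index relabeling (an automorphism of $\lieso{10}$), and your explicit bracket verification --- in particular the observation that the anticommuting involution $\PP_0$ and the central complex unit $\I_0=i\,\Id$ produce identical structure constants --- is precisely the check the paper leaves implicit.
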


\begin{proof} An isomorphism can be defined through the choices of our bases.  Just look at the correspondence
\[
P_{\alpha\beta} \longleftrightarrow J_{\alpha +1, \beta +1}, \; \; 0\leq \alpha<\beta \leq 8, \qquad P_{\alpha9} \longleftrightarrow J_{0, \alpha +1}, \; \; 0\leq \alpha \leq 8.
\]
%Remind that $\liespin{10}$ is a subalgebra of $\liesu{16} \subset \lieso{32}$.
\end{proof}

\emph{Beginning of the Proof of Theorem \ref{half-spin}.}
The two isomorphic Lie subalgebras $\mathfrak{h}, \liespin{10} \subset \lieso{32}$ correspond to two subgroups $H, H'\subset \SO{32}$. Note that both $H$ and $H'$ are isomorphic to $\Spin{10}$. For the subgroup $H$, this is recognized by the characterization of $\Spin{n}$ as the group generated by unit bivectors in the multiplicative group of invertible element in the ambient Clifford algebra (see for example \cite[page 198]{HarSpC}). As for $H'$, its isomorphism with $\Spin{10}$ is a consequence of how we constructed the group $H' \subset \SU{16}$ and its Lie algebra at the beginning of this Section. Note that in the previous discussion we already denoted by $\Spin{10}$ the subgroup $H'\subset \SU{16}$  and by $\liespin{10}$ its Lie algebra. By comparing with the half-spin representation theory (\cite[Chapter 3]{MeiCAL}, in particular pages 80--85), it follows that $H$ is a real non-spin representation of $\Spin{10}$ and that $H'$ is the image under one of the two non-isomorphic and conjugate half-spin representations of the abstract group $\Spin{10}$.
\hfill $\Box$
%Concerning the second one of the two half spin representations, it is described by the matrices
%\[
%\I_0 =(i \Id) \in \SU{16}, \I_1, \dots , \I_9 \in \SO{16} \subset \SU{16},
%\]
%(cf.\ formulas \eqref{top}), that can of course be rewritten under the standard real %representation
%\[
%A=(a_{\alpha\beta}=a'_{\alpha\beta}+a''_{\alpha\beta}) \in \SU{16} \hookrightarrow \left(
%\begin{array}{c|c}
%a'_{\alpha\beta} & -a''_{\alpha\beta}\\
%\hline
%a''_{\alpha\beta} &a'_{\alpha\beta}
%\end{array}
%\right) \in \SO{32}.\]

\begin{re} \rm{Concerning the subgroup $H \subset\SO{32}$, observe that among the endomorphisms $\PP_0, \PP_1,\dots , \PP_9$ of $\RR^{32}$ defined by Formulas \eqref{top32}, just $P_6$ ad $P_7$ are not in $\U{16} \subset \SO{32}$. A detailed description of them gives:
\[
\scriptsize{\PP_6= \left(\begin{array}{c|c|c|c}
0& 0&-R_g&0 \\
\hline
0& 0&0&R_g \\
\hline
R_g& 0&0&0 \\
\hline
0&-R_g&0&0
\end{array}
\right),
\qquad\PP_7 = \left(
\begin{array}{c|c|c|c}
0& 0&-R_h&0 \\
\hline
0& 0&0&R_h \\
\hline
R_h& 0&0&0 \\
\hline
0&-R_h&0&0
\end{array}
\right),}
\]
where
\[
\scriptsize{R_g= \left(\begin{array}{c|c}
0&L^\HH_j \\
\hline
L^\HH_j& 0
\end{array}
\right), \qquad R_h= \left(\begin{array}{c|c}
0&L^\HH_k \\
\hline
L^\HH_k& 0
\end{array}
\right)
,}
\]
and the left quaternionic matrices
\[
\scriptsize{L^\HH_j= \left(\begin{array}{cccc}
0&0&-1&0 \\
0&0&0&1 \\
1&0&0&0\\
0&-1&0&0
\end{array}
\right), \qquad L^\HH_k= \left(\begin{array}{cccc}
0&0&0&-1 \\
0&0&-1&0 \\
0&1&0&0\\
1&0&0&0
\end{array}
\right)
}
\]
are not in the subgroup $\U{2} \subset \SO{4}$ (cf.\ \cite[pages 329 and 331]{PaPSAC}).}
\end{re}

%but now the representation goes (when now $A$ is one of the $\PP_\alpha$) as
%\[
%A=A'+iA''  \hookrightarrow \left(
%\begin{array}{c|c}
%A' & A''\\
%\hline
%A''^t & -A'^t
%\end{array}
%\right) \in \SO{32}.
%\]
%The possibility of generating through a Clifford system the first of the two half-spin representation (but not the second) is explained by the non-existence of an isomorphism between them. 

\section{The canonical 8-form $\Phi_{\Spin{10}}$}

Look now at the skew-symmetric matrix $\psi^D= (\psi_{\alpha \beta})_{0 \leq \alpha, \beta \leq 9}$ of the K\"ahler forms defined for $\alpha <\beta$ in the previous Section and associated with the family $J^D$. The skew-symmetry of $\psi^D$ as matrix of K\"ahler forms associated with complex structures is insured by setting (formally, coherently with Proposition \ref{iso} and for $\alpha=1, \dots ,9$):
\[
J_{0 \alpha} = i \wedge \I_\alpha = -\I_\alpha \wedge i = -J_{\alpha 0}.
\]
Denote by $\tau_2 = \sum_{0 \leq \alpha < \beta \leq 9} \psi^2_{\alpha \beta}$ the second coefficient of its characteristic polynomial.

\begin{te}\label{tau}
\[
\tau_2 (\psi^D)= - 3\omega^2,
\]
where
\[
\omega = \frac{i}{2}(\shortform{1\bar 1}+ \dots + \shortform{8\bar 8} + \shortform{1' \bar 1'} + \dots + \shortform{8' \bar 8'})
\]
is the K\"ahler 2-form of  the complex structure $\mathcal I$ on $\CC^{16}$.
\end{te}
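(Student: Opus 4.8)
The plan is to identify the Hermitian symbol of each $\psi_{\alpha\beta}$ and thereby reduce the whole statement to one Clifford-algebraic eigenvalue computation. Each $J_{\alpha\beta}$ commutes with $\mathcal{I}$ and is $g$-skew, so $\psi_{\alpha\beta}$ is a real $(1,1)$-form $\psi_{\alpha\beta}=\frac{i}{2}\sum_{a,b}(H^{\alpha\beta})_{ab}\,dz_a\wedge d\bar z_b$ with Hermitian symbol $H^{\alpha\beta}:=-iJ_{\alpha\beta}$. Writing $\mathcal{E}_0:=i\,\Id$ and $\mathcal{E}_\alpha:=\mathcal{I}_\alpha$ ($1\le\alpha\le9$), formulas \eqref{9brackets} give the uniform description $H^{\alpha\beta}=-i\,\mathcal{E}_\alpha\mathcal{E}_\beta$, so that $H^{0\alpha}=\mathcal{I}_\alpha$ is real and $H^{\alpha\beta}=-i\mathcal{I}_\alpha\mathcal{I}_\beta$ is purely imaginary for $\alpha,\beta\ge1$; these $45$ matrices are traceless Hermitian, pairwise orthogonal for $\langle A,B\rangle=\tr{AB}$ with $\tr{(H^{\alpha\beta})^2}=16$, and span $\liespin{10}\subset\liesu{16}$. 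Note also that $\kahlerform$ is exactly $\psi$ with symbol $\Id$, in the same $\frac{i}{2}$-normalization.

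Next I would reduce to a tensor identity. Expanding a $(1,1)$-square in $2\times2$ minors, the coefficient of $dz_a\wedge dz_c\wedge d\bar z_b\wedge d\bar z_d$ (with $a<c$, $b<d$) in $\psi_{\alpha\beta}^2$ is $\tfrac12\big((H^{\alpha\beta})_{ab}(H^{\alpha\beta})_{cd}-(H^{\alpha\beta})_{ad}(H^{\alpha\beta})_{cb}\big)$, so the same coefficient in $\tau_2(\psi^D)=\sum_{\alpha<\beta}\psi_{\alpha\beta}^2$ is $\tfrac12\big(U_{ab,cd}-U_{ad,cb}\big)$, where $U_{ab,cd}:=\sum_{\alpha<\beta}(H^{\alpha\beta})_{ab}(H^{\alpha\beta})_{cd}$. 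The tensor $U$ is encoded by the $\CC$-linear operator $\mathcal{G}(M):=\sum_{\alpha<\beta}H^{\alpha\beta}MH^{\alpha\beta}=\sum_{\alpha=1}^9\mathcal{I}_\alpha M\mathcal{I}_\alpha-\sum_{1\le\alpha<\beta\le9}\mathcal{I}_\alpha\mathcal{I}_\beta M\mathcal{I}_\alpha\mathcal{I}_\beta$ via $\mathcal{G}(M)_{ad}=\sum_{b,c}U_{ab,cd}M_{bc}$.

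Then I would diagonalise $\mathcal{G}$ on the Clifford basis $\Gamma_A=\mathcal{I}_{\alpha_1}\cdots\mathcal{I}_{\alpha_k}$ of $\End{\CC^{16}}$. From $\mathcal{I}_\alpha\Gamma_A\mathcal{I}_\alpha=\pm\Gamma_A$ (sign $(-1)^k$ or $(-1)^{k-1}$ according as $\alpha\notin A$ or $\alpha\in A$) and $\mathcal{I}_\alpha\mathcal{I}_\beta\Gamma_A\mathcal{I}_\alpha\mathcal{I}_\beta=-(-1)^{r}\Gamma_A$ (with $r$ the number of $\alpha,\beta$ lying in $A$), one finds that $\Gamma_A$ is an eigenvector with eigenvalue depending only on $k=|A|$, equal to $45,13,13,-3,-3$ for $k=0,1,2,3,4$ (and the same on $i\Gamma_A$, by $\CC$-linearity). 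The coincidences between $k=1,2$ and between $k=3,4$ are what make $\liespin{10}$ a single eigenspace; they match the $\Spin{10}$-decomposition $\End{\CC^{16}}\cong\Lambda^0\oplus\Lambda^2\oplus\Lambda^4$ (dimensions $1,45,210$) with $\Lambda^2=\liespin{10}$. Hence $\mathcal{G}=-3\,\Id+48\,\Pi_0+16\,\Pi_2$, where $\Pi_0(M)=\tfrac1{16}\tr{M}\Id$ and $\Pi_2(M)=\tfrac1{16}\sum_{\alpha<\beta}\tr{H^{\alpha\beta}M}\,H^{\alpha\beta}$ are the orthogonal projections onto $\Lambda^0$ and $\liespin{10}$.

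Finally I would read off entries: applying $\mathcal{G}=-3\,\Id+48\,\Pi_0+16\,\Pi_2$ to $M=E_{bc}$ yields $U_{ab,cd}=-3\,\delta_{ab}\delta_{cd}+3\,\delta_{ad}\delta_{bc}+U_{ad,cb}$, i.e. $U_{ab,cd}-U_{ad,cb}=3(\delta_{ad}\delta_{bc}-\delta_{ab}\delta_{cd})$. Substituting, the coefficient of $dz_a\wedge dz_c\wedge d\bar z_b\wedge d\bar z_d$ in $\tau_2(\psi^D)$ is $\tfrac32(\delta_{ad}\delta_{bc}-\delta_{ab}\delta_{cd})$; the orderings $a<c$, $b<d$ kill $\delta_{ad}\delta_{bc}$ and leave $-\tfrac32$ exactly when $(b,d)=(a,c)$, which is precisely the coefficient of $dz_a\wedge dz_c\wedge d\bar z_a\wedge d\bar z_c$ in $-3\,\kahlerform^2$. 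I expect the main difficulty to be bookkeeping rather than conceptual: pinning down the signs and the factor $\tfrac12$ in the symbol/minor dictionary, and handling the $k\ge5$ Clifford products through the volume element $\mathcal{I}_1\cdots\mathcal{I}_9=\pm\Id$ so the eigenvalue count is consistent. As an independent check, the single coefficient can be computed directly from \eqref{complex28}--\eqref{complex9}: the diagonal part gives $\sum_{\alpha<\beta}(H^{\alpha\beta})_{11}(H^{\alpha\beta})_{22}=1$ (only $H^{09}=\mathcal{I}_9$ contributes), while $\sum_{\alpha<\beta}\lvert(H^{\alpha\beta})_{12}\rvert^2=4$ from octonion multiplication (the unit $R_i$ plus the three Fano lines through $i$), so that $\tfrac12(1-4)=-\tfrac32$, in agreement.
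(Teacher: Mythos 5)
Your proposal is correct, and it proves the theorem by a genuinely different route than the paper. The paper's proof is a direct coordinate computation: it splits $\tau_2=\rho_2+\mu_2+\nu_2$ according to the three blocks \eqref{complex28}, \eqref{complex8}, \eqref{complex9}, restricts to the subspaces $V$, $V'$, organizes the hundreds of cross-terms into octonionic families, and verifies the cancellations essentially term by term. You instead encode the whole quadratic expression in the $\CC$-linear operator $\mathcal{G}(M)=\sum_{\alpha<\beta}H^{\alpha\beta}MH^{\alpha\beta}$ and diagonalize it on the Clifford words $\Gamma_A$, $|A|\le 4$, which form a trace-orthogonal basis of $\End{\CC^{16}}$ since the volume element $\I_1\cdots\I_9$ acts as $\pm\Id$; I checked your eigenvalues $(-1)^k(9-2k)+\binom{9-k}{2}-k(9-k)+\binom{k}{2}=45,13,13,-3,-3$ for $k=0,\dots,4$, the resulting identity $\mathcal{G}=-3\,\Id+48\,\Pi_0+16\,\Pi_2$, the symbol/minor dictionary with its factor $\tfrac12$, and the final entry-reading step $U_{ab,cd}-U_{ad,cb}=3(\delta_{ad}\delta_{bc}-\delta_{ab}\delta_{cd})$: all are right, and they do yield the coefficient $-\tfrac32\,\delta_{ab}\delta_{cd}$ of $-3\omega^2$. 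What your approach buys is significant: it is shorter, it explains the constant ($-3$ is the eigenvalue of the Casimir-type operator on the $\Lambda^4$-isotypic component, i.e.\ on the orthogonal complement of $\CC\,\Id\oplus(\liespin{10}\otimes\CC)$ in $\End{\CC^{16}}$), it proves a finer operator identity than the alternating part actually needed, and the same mechanism applies verbatim to other even Clifford structures (for instance it recovers $\tau_2(\psi^C)=0$ of Proposition \ref{acs:7->8->9}). What the paper's computation buys is self-containedness: it uses nothing beyond the explicit formulas already displayed, at the cost of opaque bookkeeping. Two cosmetic points you should fix in a write-up: the real span of your Hermitian symbols $H^{\alpha\beta}$ is $i\cdot\liespin{10}$ rather than $\liespin{10}$ itself (only the $\CC$-spans agree, which is all you use in $\Pi_2$), and the linear independence over $\CC$ of the $256$ words $\Gamma_A$ deserves the one-line justification that distinct words have traceless quotient.
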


\begin{proof}

Decompose $\tau_2$ as follows:
\[
\tau_2 = \rho_2 + \mu_2 + \nu_2,
\]
where
\[
\rho_2 =  \sum_{1 \leq \alpha < \beta \leq 8} \psi^2_{\alpha \beta},\quad \mu_2 =  \sum_{1 \leq \gamma \leq 8} \psi^2_{\gamma 9},  \quad \nu_2=  \sum_{1 \leq \delta \leq 9} \psi^2_{0 \delta}
\]
and note that the 2-forms $\psi$ appearing in the three sums are listed in \eqref{complex28}, \eqref{complex8}, \eqref{complex9}, respectively.

Look first at the restrictions $\rho_2 \vert_V, \mu_2 \vert_V,\nu_2 \vert_V$, $\rho_2 \vert_{V'},\mu_2 \vert_{V'},\nu_2 \vert_{V'}$ to the subspaces
\[
V = <\shortform{1,2,\dots,8}>, \qquad V'=<\shortform{1',2',\dots,8'}>.
\]

We get:
\[
\rho_2 \vert_V = 2(\shortform{1\bar 1 2 \bar 2+ \dots +7\bar 7 8 \bar 8}) = -4(\omega \vert_V)^2, \qquad \rho_2 \vert_{V'} = 2(\shortform{1'\bar 1' 2' \bar 2'+ \dots +7'\bar 7' 8' \bar 8'}) = -4(\omega \vert_{V'})^2,
\]
\[
\mu_2 \vert_V = \nu_2 \vert_V = \mu_2 \vert_{V'} =\nu_2 \vert_{V'} =0.
\]

Decompose now $\rho_2$ as
\[
\rho_2 = \rho_2 \vert_V + \rho_2 \vert_{V'} + \tilde \rho_2,
\]
and to compute $\tilde \rho_2$ observe first that the K\"ahler forms $\psi_{\alpha \beta}$ in \eqref{complex28} can be arranged in the following seven families:
\begin{equation}\label{[]}
\begin{split}
\psi_{12},\psi_{34},\psi_{56}, \psi_{78}&=(\pm[\shortform{12}]\pm[\shortform{34}]\pm[\shortform{56}]\pm[\shortform{78}])\pm(\pm[\shortform{12}]\shortform{'}\pm[\shortform{34}]\shortform{'}\pm[\shortform{56}]\shortform{'}\pm[\shortform{78}]\shortform{'}),
\\
\psi_{13},\psi_{24},\psi_{57},\psi_{68}&=(\pm[\shortform{13}]\pm[\shortform{24}]\pm[\shortform{57}]\pm[\shortform{68}])\pm(\pm[\shortform{13}]\shortform{'}\pm[\shortform{24}]\shortform{'}\pm[\shortform{57}]\shortform{'}\pm[\shortform{68}]\shortform{'}),
\\
\psi_{14},\psi_{23},\psi_{58},\psi_{67}&=(\pm[\shortform{14}]\pm[\shortform{23}]\pm[\shortform{58}]\pm[\shortform{67}])\pm(\pm[\shortform{14}]\shortform{'}\pm[\shortform{23}]\shortform{'}\pm[\shortform{58}]\shortform{'}\pm[\shortform{67}]\shortform{'}),
\\
\psi_{15},\psi_{26},\psi_{37},\psi_{48}&=(\pm[\shortform{15}]\pm[\shortform{26}]\pm[\shortform{37}]\pm[\shortform{48}])\pm(\pm[\shortform{15}]\shortform{'}\pm[\shortform{26}]\shortform{'}\pm[\shortform{37}]\shortform{'}\pm[\shortform{48}]\shortform{'}),
\\
\psi_{16},\psi_{25},\psi_{38},\psi_{47}&=(\pm[\shortform{16}]\pm[\shortform{25}]\pm[\shortform{38}]\pm[\shortform{47}])\pm(\pm[\shortform{16}]\shortform{'}\pm[\shortform{25}]\shortform{'}\pm[\shortform{38}]\shortform{'}\pm[\shortform{47}]\shortform{'}),
\\
\psi_{17},\psi_{28},\psi_{35},\psi_{46}&=(\pm[\shortform{17}]\pm[\shortform{28}]\pm[\shortform{35}]\pm[\shortform{46}])\pm(\pm[\shortform{17}]\shortform{'}\pm[\shortform{28}]\shortform{'}\pm[\shortform{35}]\shortform{'}\pm[\shortform{46}]\shortform{'}),
\\
\psi_{18},\psi_{27},\psi_{36},\psi_{45}&=(\pm[\shortform{18}]\pm[\shortform{27}]\pm[\shortform{36}]\pm[\shortform{45}])\pm(\pm[\shortform{18}]\shortform{'}\pm[\shortform{27}]\shortform{'}\pm[\shortform{36}]\shortform{'}\pm[\shortform{45}]\shortform{'}),
\end{split}
\end{equation}
where we use short notations like for example
\[
\pm[\shortform{12}] = \pm (-\shortform{1\bar 2}+\shortform{2\bar 1}).
\]
In each line of Formulas \eqref{[]} the signs before brackets follow (up to a global change) the four patterns
\begin{equation}\label{pm}
\begin{split}
-&[\phantom{\shortform{12}}]+[\phantom{\shortform{12}}]+[\phantom{\shortform{12}}]-[\phantom{\shortform{12}}],\qquad -[\phantom{\shortform{12}}]+[\phantom{\shortform{12}}]-[\phantom{\shortform{12}}]+[\phantom{\shortform{12}}],
\\
-&[\phantom{\shortform{12}}]-[\phantom{\shortform{12}}]+[\phantom{\shortform{12}}]+[\phantom{\shortform{12}}],\qquad +[\phantom{\shortform{12}}]+[\phantom{\shortform{12}}]+[\phantom{\shortform{12}}]+[\phantom{\shortform{12}}],
\end{split}
\end{equation}
that is, an even number of $+$ and of $-$ signs. Also, in all these seven families (and again up to a global change for forms of type $\psi_{1 \beta}$, for $\beta=2,\dots,8$), the same pattern appears both in the terms with coordinates $(1,\dots,8)$ and in those with coordinates $(1',\dots,8')$.

These observations allow to compute $\tilde \rho_2$. Namely, the $\tilde \rho_2$ components $\widetilde{( \dots  \dots)}$ of the sums of four squared K\"ahler forms listed in each line of Formulas \eqref{[]} read:

\begin{equation}\label{7sums}
\begin{split}
\widetilde{(\psi^2_{12}+\psi^2_{34}+\psi^2_{56}+\psi^2_{78}}&)=[\shortform{1\bar21'\bar2'}+\shortform{1\bar23'\bar4'}+\shortform{1\bar25'\bar6'}-\shortform{1\bar27'\bar8'}+\shortform{3\bar41'\bar2'}+ \shortform{3\bar43'\bar4'}-\shortform{3\bar45'\bar6'}+\shortform{3\bar47'\bar8'}
\\
&+\shortform{5\bar61'\bar2'}-\shortform{5\bar63'\bar4'}+\shortform{5\bar65'\bar6'}+\shortform{5\bar67'\bar8'}-\shortform{7\bar81'\bar2'}+\shortform{7\bar83'\bar4'}+\shortform{7\bar85'\bar6'}+\shortform{7\bar87'\bar8'}]_{\text{odd-odd}} \enspace +
\\
&[-\shortform{1\bar22'\bar1'}-\shortform{1\bar24'\bar3'}-\shortform{1\bar26'\bar5'}+\shortform{1\bar28'\bar7'}-\shortform{3\bar42'\bar1'}-\shortform{3\bar44'\bar3'}+\shortform{3\bar46'\bar5'}-\shortform{3\bar48'\bar7'}
\\
&-\shortform{5\bar62'\bar1'}+\shortform{5\bar64'\bar3'}-\shortform{5\bar66'\bar5'}-\shortform{5\bar68'\bar7'}+\shortform{7\bar82'\bar1'}-\shortform{7\bar84'\bar3'}-\shortform{7\bar86'\bar5'}-\shortform{7\bar88'\bar7'}]_{\text{odd-even}} \enspace +
\\
&[-\shortform{2\bar11'\bar2'}-\shortform{2\bar13'\bar4'}-\shortform{2\bar15'\bar6'}+\shortform{2\bar17'\bar8'}-\shortform{4\bar31'\bar2'}-\shortform{4\bar33'\bar4'}+\shortform{4\bar35'\bar6'}-\shortform{4\bar37'\bar8'}
\\
&-\shortform{6\bar51'\bar2'}+\shortform{6\bar53'\bar4'}-\shortform{6\bar55'\bar6'}-\shortform{6\bar57'\bar8'}+\shortform{8\bar71'\bar2'}-\shortform{8\bar73'\bar4'}-\shortform{8\bar75'\bar6'}-\shortform{8\bar77'\bar8'}]_{\text{even-odd}} \enspace +
\\
&[\shortform{2\bar12'\bar1'}+\shortform{2\bar14'\bar3'}+\shortform{2\bar16'\bar5'}-\shortform{2\bar18'\bar7'}+\shortform{4\bar32'\bar1'}+\shortform{4\bar34'\bar3'}-\shortform{4\bar36'\bar5'}+\shortform{4\bar38'\bar7'}
\\
&+\shortform{6\bar52'\bar1'}-\shortform{6\bar54'\bar3'}+\shortform{6\bar56'\bar5'}+\shortform{6\bar58'\bar7'}-\shortform{8\bar72'\bar1'}+\shortform{8\bar74'\bar3'}+\shortform{8\bar76'\bar5'}+\shortform{8\bar78'\bar7'}]_{\text{even-even}} \enspace ,
\\
\widetilde{(\psi^2_{13}+\psi^2_{24}+\psi^2_{57}+\psi^2_{68}}&)=[\shortform{1\bar31'\bar3'}-\shortform{1\bar32'\bar4'}+\shortform{1\bar35'\bar7'}+\shortform{1\bar36'\bar8'}-\shortform{2\bar41'\bar3'}+\shortform{2\bar42'\bar4'}+\shortform{2\bar45'\bar7'}+\shortform{2\bar46'\bar8'}
\\
&+\shortform{5\bar71'\bar3'}+\shortform{5\bar72'\bar4'}+\shortform{5\bar75'\bar7'}-\shortform{5\bar76'\bar8'}+\shortform{6\bar81'\bar3'}+\shortform{6\bar82'\bar4'}-\shortform{6\bar85'\bar7'}+\shortform{6\bar86'\bar8'}]_{\text{odd-odd}} \enspace +
\\
&[\dots]_{\text{odd-even}} \enspace + [\dots]_{\text{even-odd}} \enspace + [\dots]_{\text{even-even}} \enspace ,
\\
\widetilde{(\psi^2_{14}+\psi^2_{23}+\psi^2_{58}+\psi^2_{67}}&)=[\shortform{1\bar41'\bar4'}+\shortform{1\bar42'\bar3'}+\shortform{1\bar45'\bar8'}-\shortform{1\bar46'\bar7'}+\shortform{2\bar31'\bar4'}+\shortform{2\bar32'\bar3'}-\shortform{2\bar35'\bar8'}+\shortform{2\bar36'\bar7'}
\\
&+\shortform{5\bar81'\bar4'}-\shortform{5\bar82'\bar3'}+\shortform{5\bar85'\bar8'}+\shortform{5\bar86'\bar7'}-\shortform{6\bar71'\bar4'}+\shortform{6\bar72'\bar3'}+\shortform{6\bar75'\bar8'}+\shortform{6\bar76'\bar7'}]_{\text{odd-odd}} \enspace +
\\
&[\dots]_{\text{odd-even}} \enspace + [\dots]_{\text{even-odd}} \enspace + [\dots]_{\text{even-even}} \enspace ,
\\
\widetilde{(\psi^2_{15}+\psi^2_{26}+\psi^2_{37}+\psi^2_{48}}&)=[\shortform{1\bar51'\bar5'}-\shortform{1\bar52'\bar6'}-\shortform{1\bar53'\bar7'}-\shortform{1\bar54'\bar8'}-\shortform{2\bar61'\bar5'}+\shortform{2\bar62'\bar6'}-\shortform{2\bar63'\bar7'}-\shortform{2\bar64'\bar8'}
\\
&-\shortform{3\bar71'\bar5'}-\shortform{3\bar72'\bar6'}+\shortform{3\bar73'\bar7'}-\shortform{3\bar74'\bar8'}-\shortform{4\bar81'\bar5'}-\shortform{4\bar82'\bar6'}-\shortform{4\bar83'\bar7'}+\shortform{4\bar84'\bar8'}]_{\text{odd-odd}} \enspace +
\\
&[\dots]_{\text{odd-even}} \enspace + [\dots]_{\text{even-odd}} \enspace + [\dots]_{\text{even-even}} \enspace ,
\\
\widetilde{(\psi^2_{16}+\psi^2_{25}+\psi^2_{38}+\psi^2_{47}}&)=[\shortform{1\bar61'\bar6'}+\shortform{1\bar62'\bar5'}-\shortform{1\bar63'\bar8'}+\shortform{1\bar64'\bar7'}+\shortform{2\bar51'\bar6'}+\shortform{2\bar52'\bar5'}+\shortform{2\bar53'\bar8'}-\shortform{2\bar54'\bar7'}
\\
&-\shortform{3\bar81'\bar6'}+\shortform{3\bar82'\bar5'}+\shortform{3\bar83'\bar8'}+\shortform{3\bar84'\bar7'}+\shortform{4\bar71'\bar6'}-\shortform{4\bar72'\bar5'}+\shortform{4\bar73'\bar8'}+\shortform{4\bar74'\bar7'}]_{\text{odd-odd}} \enspace +
\\
&[\dots]_{\text{odd-even}} \enspace + [\dots]_{\text{even-odd}} \enspace + [\dots]_{\text{even-even}} \enspace ,
\\
\widetilde{(\psi^2_{17}+\psi^2_{28}+\psi^2_{35}+\psi^2_{46}}&)=[\shortform{1\bar71'\bar7'}+\shortform{1\bar72'\bar8'}+\shortform{1\bar73'\bar5'}-\shortform{1\bar74'\bar6'}+\shortform{2\bar81'\bar7'}+\shortform{2\bar82'\bar8'}-\shortform{2\bar83'\bar5'}+\shortform{2\bar84'\bar6'}
\\
&+\shortform{3\bar51'\bar7'}-\shortform{3\bar52'\bar8'}+\shortform{3\bar53'\bar5'}+\shortform{3\bar54'\bar6'}-\shortform{4\bar61'\bar7'}+\shortform{4\bar62'\bar8'}+\shortform{4\bar63'\bar5'}+\shortform{4\bar64'\bar6'}]_{\text{odd-odd}} \enspace +
\\
&[\dots]_{\text{odd-even}} \enspace + [\dots]_{\text{even-odd}} \enspace + [\dots]_{\text{even-even}} \enspace ,
\\
\widetilde{(\psi^2_{18}+\psi^2_{27}+\psi^2_{36}+\psi^2_{45}}&)=[\shortform{1\bar81'\bar8'}-\shortform{1\bar82'\bar7'}+\shortform{1\bar83'\bar6'}+\shortform{1\bar84'\bar5'}-\shortform{2\bar71'\bar8'}+\shortform{2\bar72'\bar7'}+\shortform{2\bar73'\bar6'}+\shortform{2\bar74'\bar5'}
\\
&+\shortform{3\bar61'\bar8'}+\shortform{3\bar62'\bar7'}+\shortform{3\bar63'\bar6'}-\shortform{3\bar64'\bar5'}+\shortform{4\bar51'\bar8'}+\shortform{4\bar52'\bar7'}-\shortform{4\bar53'\bar6'}+\shortform{4\bar54'\bar5'}]_{\text{odd-odd}} \enspace +
\\
&[\dots]_{\text{odd-even}} \enspace + [\dots]_{\text{even-odd}} \enspace + [\dots]_{\text{even-even}} \enspace ,
\end{split}
\end{equation}
where the parentheses $[\dots]_{\text{odd-odd}}, [\dots]_{\text{odd-even}} , \dots$ are the $448=64 \times 7$ double products of terms that in Formulas \eqref{complex28} appear as summands both in an odd position, or the first in an odd and the second in an even  position, or similarly for the remaining two cases.

Next, look at $\mu_2 =  \sum_{1 \leq \gamma \leq 8} \psi^2_{\gamma 9}$ and at Formulas \eqref{complex8}. One gets in this ways a sum of $960 = 120 \times 8$ terms, among which $64 = 8 \times 8$ give:
\[
\mu'_2 = \frac{1}{2}(\shortform{1\bar11'\bar1'} + \shortform{1\bar12'\bar2'} + \dots +\shortform{1\bar18'\bar8'} + \dots + \shortform{8\bar8 8'\bar8'})
\]
and the remaining $896$ terms yield the component $\tilde \mu_2$ of
\[
\mu_2 = \mu'_2 +\tilde \mu_2.
\]
Now a computation shows that one half of the 896 terms in $\tilde \mu_2$ coincide, up to coefficient $-\frac{1}{2}$,  with the $448=64 \times 7$ terms of $\tilde \rho_2$. For example, among the following four terms:
\[
\tilde\mu_2 = \frac{1}{2}(\shortform{1\bar1'2'\bar2'} - \shortform{\underline{1\bar1'2' \bar2}}  -\shortform{\underline{1'\bar12'\bar2'}} + \shortform{1'\bar 1 2'\bar2} + \dots)
\]
only the two underlined correspond to terms in $\tilde \rho_2$. Thus, if $\underline{\tilde \mu_2}$ denotes  the sums of all underlined terms in $\tilde \mu_2$ we get:
\[
\tilde \rho_2 +\underline {\tilde \mu_2} = \frac{1}{2} \tilde \rho_2.
\]

Finally, look at terms of $\nu_2=  \sum_{1 \leq \delta \leq 9} \psi^2_{0 \delta}$ through Formulas \eqref{complex9}. This gives $1080=120 \times 9$ terms. Among them, $64 = 8 \times 8$ terms (coming from all the $\psi^2_{0 \delta}$ except $\psi^2_{0 9}$) yield the following sum
\[
\nu'_2 = \frac{1}{2}(\shortform{1\bar11'\bar1'} + \shortform{1\bar12'\bar2'} + \dots + \shortform{1\bar18'\bar8'} + \dots + \shortform{8\bar8 8'\bar8'}),
\]
so that
\[
\mu'_2 + \nu'_2 = \shortform{1\bar11'\bar1'} + \shortform{1\bar12'\bar2'} + \dots + \shortform{\bar18'\bar8'} + \dots + \shortform{8\bar8 8'\bar8'}.
\]
Moreover, we get:
\[
\psi^2_{0 9} = -\frac{1}{2}(\shortform{1\bar1 2\bar2} + \dots +  \shortform{7\bar 7 8\bar8}) -\frac{1}{2}(\shortform{1'\bar1' 2\bar2'} + \dots + \shortform{7'\bar7' 8'\bar8'}) + \frac{1}{2}(\shortform{1\bar11'\bar1'} + \shortform{1\bar1' 2' \bar2'} + \dots + \shortform{1\bar18'\bar8'}  + \dots + \shortform{8\bar8 8'\bar8'}),
\]
and by summing up:
\begin{equation}
\begin{split}
\rho_2 \vert_V +  \rho_2 \vert_{V'} +\mu'_2 + \nu'_2 + \psi^2_{0 9} = 2(\shortform{1\bar 1 2 \bar 2} + \dots + \shortform{7\bar 7 8 \bar 8}  + \shortform{1'\bar 1' 2' \bar 2'} + \dots + \shortform{7'\bar 7' 8' \bar 8'})
 + (\shortform{1\bar11'\bar1'} + \shortform{1\bar12'\bar2'} + \dots + \shortform{8\bar8 8'\bar8'}) \\ -\frac{1}{2}(\shortform{1\bar1 2\bar2} + \dots + \shortform{7\bar 7 8\bar8} + \shortform{1'\bar1' 2\bar2'} + \dots + \shortform{7'\bar7' 8'\bar8'})
 +\frac{1}{2}(\shortform{1\bar11'\bar1'}
 + \shortform{1\bar1' 2' \bar2'} + \dots +\shortform{1\bar18'\bar8'}  + \dots + \shortform{8\bar8 8'\bar8'})  = -3 \omega^2.
 \end{split}
\end{equation}

Next, we still have to deal with
\begin{equation}
\tilde \rho_2 +  \tilde \mu_2 + \tilde \nu_2  = \frac{1}{2} \tilde \rho_2 +  (\tilde \mu_2 - \underline{\tilde \mu_2}) + \tilde \nu_2 ,
\end{equation}
where the summands $\frac{1}{2} \tilde \rho_2 $, $\tilde \mu_2 - \underline{\tilde \mu_2}$,  $\tilde \nu_2$ consist respectively of 448, 448, 996 terms. A straightforward computation shows that this sum is zero. For example, the initial terms of the three sums are:
\[
\frac{1}{2} \tilde \rho_2 =  \frac{1}{2} [\shortform{1\bar21'\bar2'} - \shortform{1\bar22'\bar1'} -\shortform{2\bar1 1'\bar2'}+\shortform{2\bar1 2'\bar1'} + \dots]
\]
\[
\tilde \mu_2 - \underline{\tilde \mu_2} = \frac{1}{2} [\shortform{1\bar1' 2\bar2'} + \shortform{1'\bar12'\bar2} -\shortform{1\bar2' 2\bar1'}\shortform{2'\bar1 1'\bar2} + \dots]
\]
\[
\tilde \nu_2 = \frac{1}{2} [-\shortform{1\bar 1'2\bar2'} - \shortform{1\bar1'2'\bar 2} -\shortform{1'\bar1 2\bar2'}-\shortform{1'\bar1 2'\bar2'}
+\shortform{1\bar 2'2\bar1'} +\shortform{1\bar2'1'\bar 2} +\shortform{2'\bar1 2\bar1'}+\shortform{2'\bar1 1'\bar2} + \dots];
\]
and it is easy to see that the written terms cancel pairwise.
\end{proof}

It is now natural to give the following

\begin{de}\label{spin10}
Let $\tau_4$ be the fourth coefficient of the characteristic polynomial. We call the 8-form
\[
\Phi_{\Spin{10}} = \tau_4 (\psi^D)= \sum_{0\leq \alpha_1 < \alpha_2 <  \alpha_3 <  \alpha_4 \leq 9} ( \psi_{\alpha_1 \alpha_2} \wedge \psi_{\alpha_3  \alpha_4} - \psi_{\alpha_1 \alpha_3} \wedge \psi_{\alpha_2  \alpha_4} + \psi_{\alpha_1 \alpha_4} \wedge \psi_{\alpha_2  \alpha_3} )^2
\]
the \emph{canonical 8-form} associated with the standard $\Spin{10}$-structure in $\CC^{16}$.
\end{de}

\begin{re}\label{berger}
\rm{A close analogy appears between the constructions of the forms $\Phi_{\Spin{9}} \in \Lambda^8(\RR^{16})$ and $\Phi_{\Spin{10}} \in \Lambda^8(\CC^{16})$ (cf.\ Proposition \ref{acs:7->8->9} and Definition \ref{spin10}).

However, $\Phi_{\Spin{9}}$ can alternatively be defined by integrating the volume of octonionic lines in the octonionic plane. Namely, if $\nu_l$ denotes the volume form on the line $l = \{(x,mx)\}$ or $l = \{(0,y)\}$ in $\OO^2$, then
\[
\Phi_{\Spin{9}} =\int_{\OO P^1} p_l^*\nu_l \,dl,
\]
where $p_l: \OO^2 \cong \RR^{16} \rightarrow l$ is the orthogonal projection and $\OO P^1 \cong S^8$ is the octonionic projective line of all the lines $l \subset \OO^2$. This is the definition of $\Phi_{\Spin{9}}$ proposed by M. Berger in \cite{BerCCP}, somehow anticipating the spirit of calibrations.

Of course, an approach like this is not possible for $\Phi_{\Spin{10}}$, due to the lack of a similar Hopf fibration to refer to. Thus Definition \ref{spin10} appears to be a coherent algebraic analogy, and as we will see in next Section, it is suitable to represent a generator for the cohomology of the relevant symmetric space.}
\end{re}

\begin{re}\label{cliffordi}
\rm{Denote by $\mathfrak I$ the standard complex structure on $\CC^{16}$ and look at the ten endomorphisms $\mathfrak I, \I_1,\dots,\I_9$: the first of them is a complex structure and the remaining nine are involutions. The above discussion shows that these data are the right choice to give rise, via compositions of any pair of the ten endomorphisms, to the family $J^D = \{J_{\alpha \beta}\}_{0 \leq \alpha < \beta \leq 9}$, a basis of $\mathfrak{spin}(10)$. Note also that, on the fourth Severi variety $\EIII$, the complex structure $\mathfrak I$ can be looked at as element of the Lie algebra in the second factor of its holonomy $\Spin{10} \cdot \mathrm{U}(1)$.}
\end{re}

\section{The even Clifford structure, cohomology and proof of Theorem \ref{main}}\label{Rosenfeld}
%\section{The even Clifford structure of $V_{(4)}$}\label{Rosenfeld}

In \cite{MoSCSR} the notion of \emph{even Clifford structure} on a Riemannian manifold $(M,g)$ is proposed as the datum of an oriented rank $r$ Euclidean vector bundle $E \rightarrow M$, together with a bundle morphism $\varphi : \Cl^0(E) \rightarrow \End{TM}$ from the even Clifford algebra bundle of $E$, and mapping $\Lambda^2 E$ into the skew-symmetric endomorphisms. Here $\Lambda^2 E$ is viewed as a sub-bundle of $\Cl^0(E)$ by the identification $e \wedge f \sim e \cdot f +h(e,f)$, for each $e,f \in E$ and where $h$ is the Euclidean metric on $E$.

Under the hypothesis of \emph{parallel} even Clifford structure (cf.\ \cite[page 945]{MoSCSR}), complete simply connected Riemannian manifolds admitting such a structure are classified (\cite[page 955]{MoSCSR}) and $\EIII$ turns out to be the only non-flat example with $r=10$.

\begin{pr}
The sub-bundle $E=<\I_0> \oplus <\I_1,\dots,\I_9> \subset \End{T\,\EIII}$ defines on $\EIII$ an even rank $10$ parallel Clifford structure.
\end{pr}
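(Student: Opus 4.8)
The plan is to reduce the entire statement to an algebraic verification at the base point, exploiting that $\EIII=\Esix/K$ with $K=\Spin{10}\cdot\U{1}$ is a Riemannian symmetric space. For such a space the Levi-Civita connection coincides with the canonical connection, its holonomy algebra is exactly $\mathfrak{k}=\liespin{10}\oplus\lieu{1}$, and any tensorial object on $\End{T\EIII}$ built from a $K$-equivariant datum on the isotropy module $\mathfrak{m}=T_o\EIII\cong\CC^{16}$ is automatically parallel. Thus it suffices to produce, at the origin $o$, the Euclidean structure on $E$, the orientation, and the algebra morphism $\varphi\colon\Cl^0(E)\to\End{\mathfrak{m}}$ carrying $\Lambda^2E$ into $\Endminus{\mathfrak{m}}$, all $K$-equivariantly, and then to exhibit the compatible metric connection on $E$.

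First I would set up the even Clifford structure at $o$. I equip $E=<\I_0>\oplus<\I_1,\dots,\I_9>$ with the metric and orientation making $\I_0,\dots,\I_9$ a positively oriented orthonormal frame, and define $\varphi$ in degree two by $\varphi(e_\alpha\wedge e_\beta)=J_{\alpha\beta}=\I_\alpha\circ\I_\beta$, extended multiplicatively to $\Cl^0(E)$. By the discussion of Section \ref{liespin10} and Proposition \ref{iso}, the $45$ endomorphisms $J_{\alpha\beta}$ are complex structures spanning a Lie subalgebra isomorphic to $\liespin{10}\subset\liesu{16}\subset\lieso{32}$; in particular they are skew-symmetric, so $\varphi(\Lambda^2E)\subset\Endminus{\mathfrak{m}}$, and the closure under bracket guarantees that $\varphi$ is a well-defined morphism of algebras. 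This is precisely the datum of an even Clifford structure of rank $10$ on $\mathfrak{m}$.

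Next comes the parallelism. The restriction $\varphi|_{\Lambda^2E}$ is nothing but the inclusion $\liespin{10}\hookrightarrow\End{\mathfrak{m}}$ of the holonomy subalgebra, and this inclusion is $K$-equivariant (the adjoint action of $\Spin{10}$ on $\Lambda^2E=\liespin{10}$ matching conjugation in $\End{\mathfrak{m}}$, with $\U{1}$ acting by scalars and hence trivially by conjugation). Consequently $\varphi|_{\Lambda^2E}$ globalises to a parallel bundle morphism whose image is the parallel holonomy-algebra bundle with fibre $\liespin{10}$ inside $\Endminus{T\EIII}$. I would then equip the globalised $E$ with the canonical connection $\nabla^E$ of the reduction of the frame bundle to $\Spin{10}$; it is metric, and since both $\nabla^E$ and the Levi-Civita connection are the canonical connections of the symmetric space while $\varphi$ is $\Esix$-invariant, one obtains $\nabla\big(\varphi(\sigma)\big)=\varphi\big(\nabla^E\sigma\big)$ for every section $\sigma$ of $\Cl^0(E)$. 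This is the compatibility of the connection with $\varphi$ demanded by \cite{MoSCSR}, so the structure is parallel.

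The delicate point, which I expect to be the main obstacle, is that the rank-$10$ bundle $E$ is not itself a conjugation-invariant subspace of $\End{\mathfrak{m}}$: under the full isotropy $\Spin{10}\cdot\U{1}$ only $\Lambda^2E\cong\liespin{10}$ is preserved, whereas $<\I_1,\dots,\I_9>$ is merely $\Spin{9}$-invariant, being the $(-1)$-eigenspace $\mathfrak{m}'$ of the symmetric pair $(\liespin{10},\liespin{9})$ up to the central factor $\I_0$. This is exactly the reason the structure is essential, and it forces one to put on $E$ the auxiliary canonical connection of the vector-representation associated bundle rather than the connection induced from $\End{T\EIII}$. The substantive content of the proof is therefore the $K$-equivariance of $\varphi$, which at the Lie-algebra level is precisely the isomorphism of Proposition \ref{iso}; once it is in place, the conclusion matches the classification of \cite{MoSCSR}, for which $\EIII$ is the unique non-flat simply connected example carrying a parallel even Clifford structure of rank $10$.
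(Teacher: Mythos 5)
Your pointwise construction agrees with the paper's own proof: $\varphi$ is defined by composition, with the formal rule $\I_0\circ\I_\beta=-\I_\beta\circ\I_0$, and the images $J_{\alpha\beta}$ of $\Lambda^2E$ are the complex structures spanning $\liespin{10}$, hence land in $\Endminus{T\,\EIII}$. Moreover, your ``delicate point'' is correct and is exactly what the paper's four-line proof glosses over: the fibre $E_o=\langle\I_0\rangle\oplus\langle\I_1,\dots,\I_9\rangle$ is \emph{not} invariant under conjugation by the holonomy group (for instance $\exp(tJ_{09})\,\I_1\exp(-tJ_{09})=\cos(2t)\,\I_1-\sin(2t)\,\I_0 J_{19}$, and $\I_0J_{19}\notin E_o$), so the Levi-Civita connection of $\End{T\,\EIII}$, invoked by the paper as ``the connection insuring the parallelism'', does not even restrict to $E$; only $\varphi(\Lambda^2E)\cong\liespin{10}$ is a parallel sub-bundle. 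Your insistence on proving parallelism by $K$-equivariance at the base point together with the canonical connection of the symmetric space is the methodologically correct move, and up to that point your argument is sounder than the paper's.

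The genuine gap is in your globalisation step, and it sits precisely at that delicate point. The ``bundle associated to the vector representation'' does not exist: the vector representation of $\Spin{10}$ does not descend to the actual structure group $K=\Spin{10}\cdot\U{1}=(\Spin{10}\times\U{1})/\ZZ_4$, because the generator $(c,i)$ of the $\ZZ_4$ (with $c$ the order-four central element of $\Spin{10}$) acts trivially on $\CC^{16}$ --- this triviality is what makes $K$ act on $T\,\EIII$ at all --- but acts by $-\Id$ on $\RR^{10}$. Only $\Lambda^2\RR^{10}$ and $\Cl^0(\RR^{10})$, on which $(c,i)$ acts trivially, are $K$-modules; this is why $\varphi(\Lambda^2E)$ globalises while $E$ itself does not, and no choice of auxiliary connection can repair a bundle that is not there. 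In fact the defect is irreparable: from the fibration $K\to\Esix\to\EIII$ one gets $\pi_2(\EIII)\cong\pi_1(K)\cong\ZZ$, and the conjugation map $K\to\SO{10}/\{\pm\Id\}$ sends a generator of $\pi_1(K)$ (the path ending at $(c,i)$) to the class of $c$, a generator of $\pi_1(\SO{10}/\{\pm\Id\})\cong\ZZ_4$, whereas classes liftable to $\SO{10}$ form the index-two subgroup. Hence the $\SO{10}/\{\pm\Id\}$-structure underlying $\varphi(\Lambda^2E)$ admits no lift to $\SO{10}$ over $\EIII$, so no global rank-$10$ Euclidean bundle $E$ --- in particular no global sub-bundle $\langle\I_0\rangle\oplus\langle\I_1,\dots,\I_9\rangle\subset\End{T\,\EIII}$ --- inducing it can exist: the rank-$10$ structure is only locally defined, exactly like the rank-$8$ structure on $\SO{k+8}/\SO{8}\times\SO{k}$ for $k$ odd, which Section~\ref{Rosenfeld} itself flags as non-global. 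So the Proposition cannot be established in the literal form stated; the paper's own proof, which never explains how the local spans $\langle\I_1,\dots,\I_9\rangle$ patch nor why $E$ should be preserved by Levi-Civita transport, carries the same gap invisibly, whereas your write-up at least makes the obstruction visible. What survives --- and is all that Theorem~\ref{half-spin} and Theorem~\ref{main} actually use --- is the globally defined bundle $\varphi(\Lambda^2E)$ with its local frames $J_{\alpha\beta}$: the local matrices $\psi^D$ are then well defined up to conjugation by $\SO{10}$ modulo $\pm\Id$, and $\tau_4$ is invariant under precisely that ambiguity, so $\Phi_{\Spin{10}}$ is a genuine global $8$-form.
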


\begin{proof} The Clifford morphism $\varphi : \Cl^0(E) \rightarrow \End{T\,\EIII}$ is defined by composition. The property $\varphi(\Lambda^2 E) \subset \Endminus{T\,\EIII}$ is insured by $\I_\alpha \circ \I_\beta =-\I_\beta \circ \I_\alpha$ $(\alpha \neq \beta)$, just setting formally $\I_0 \circ \I_\beta =-\I_\beta \circ \I_0$, $\beta=1,\dots,9$, where $\I_0$ and $\I_\beta$ act on different factors of the prototype space $\CC \otimes \OO^2 \cong \CC^{16}$. Remind that the involutions $\I_1,\dots,\I_9$ are only defined locally, while the complex structure $\I_0$ is global on $\EIII$. The connection insuring the parallelism is the Levi-Civita connection on the endomorphisms' bundle.
\end{proof}

The (rational) cohomology of the fourth Severi variety $V_{(4)} \cong \EIII$ can be computed from the so-called A.~Borel presentation. Let $G$ be a compact connected Lie group, let $H$ be a closed connected subgroup of maximal rank and let $T$ be a common maximal torus. Then (cf.\ \cite[\S 26, page 19]{BorCEF}):
\begin{equation}\label{borel}
H^*(G/H) \cong H^*(BT)^{W(H)}/H^{>0}(BT)^{W(G)},
\end{equation}
where $BT$ is the classifying space of the torus $T$, $H^*(BT)^{W(H)}$ is the invariant sub-algebra of the Weyl group ${W(H)}$, and $H^{>0}(BT)^{W(G)}$ denotes the component in positive degree of the invariant sub-algebra of the Weyl group ${W(G)}$. One gets in this way the cohomology structure given by \eqref{integral}. Thus:

%\begin{equation}\label{Rosenfeld1}
%H^*(\EIII) \cong \ZZ[a_1,a_4]/(r_{9},r_{12})
%\end{equation}
%where $a_\alpha \in H^{2 \alpha}$ and $r_{\beta}$ denotes a relation in $H^{2\beta}$.

\begin{co}
The Poincar\'e polynomial, the Euler characteristics and the signature of $\EIII$ are given by
\begin{equation}
\begin{split}
&\mathrm{Poin}_{_{\EIII}} = 1+t^2+t^4+t^6+2t^8+2t^{10}+2t^{12}+2t^{14}+3t^{16}+ \dots ,
\\
& \qquad \qquad  \qquad \qquad   \chi_{_{\EIII} }=  27, \qquad \sigma_{_{\EIII}}= 3.
\end{split}
\end{equation}
\end{co}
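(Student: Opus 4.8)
The plan is to extract all three invariants from the cohomology ring presentation \eqref{integral}, which we may assume, together with the K\"ahler (Hodge--Tate) nature of $\EIII$.

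\emph{Poincar\'e polynomial.} First I would exploit the presentation $H^*(\EIII)\cong\ZZ[a_1,a_4]/(r_9,r_{12})$ with $a_1\in H^2$, $a_4\in H^8$, and $r_9\in H^{18}$, $r_{12}\in H^{24}$ (since $r_\beta\in H^{2\beta}$). As $\EIII$ is compact the quotient is finite dimensional, and a graded polynomial ring in two variables modulo two homogeneous elements with finite-dimensional quotient forces $(r_9,r_{12})$ to be a regular sequence (a homogeneous system of parameters of length equal to the Krull dimension of $\ZZ[a_1,a_4]$). Hence the Poincar\'e series is the product
\[
\mathrm{Poin}_{_{\EIII}}(t)=\frac{(1-t^{18})(1-t^{24})}{(1-t^2)(1-t^8)} .
\]
Expanding the two geometric factors as $(1+t^2+\dots+t^{16})(1+t^8+t^{16})$ and collecting terms yields the asserted coefficients $1,1,1,1,2,2,2,2,3$ in degrees $0,2,\dots,16$, the remaining ones being fixed by Poincar\'e duality. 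As a consistency check I would re-derive the same series from the equal-rank Borel presentation \eqref{borel}: the fundamental degrees of $W(\Esix)$ are $2,5,6,8,9,12$ and those of $W(\Spin{10}\cdot\U{1})$ are $1,2,4,5,6,8$ (the $\U{1}$ factor contributing the degree-$1$ invariant), so that $\prod_i(1-t^{2d_i^G})/\prod_i(1-t^{2d_i^H})$ collapses, after cancelling the common factors $(1-t^4),(1-t^{10}),(1-t^{12}),(1-t^{16})$, to exactly the product above.

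\emph{Euler characteristic.} Evaluating at $t=1$ gives $\chi=\sum_p b_{2p}=27$. Independently, for an equal-rank space $G/H$ one has $\chi=|W(G)|/|W(H)|=|W(\Esix)|/|W(\Spin{10})|=51840/1920=27$, which I would use as a second check.

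\emph{Signature.} Here I would use that $\EIII$ is a compact Hermitian symmetric space whose rational cohomology is carried by algebraic Schubert cycles, hence of Hodge--Tate type: $h^{p,q}=0$ for $p\neq q$ and $h^{p,p}=b_{2p}$. The Hodge index theorem for a compact K\"ahler manifold, $\sigma=\sum_{p,q}(-1)^q h^{p,q}$, then reduces on the diagonal to $\sigma=\sum_{p=0}^{16}(-1)^p b_{2p}$. By Poincar\'e duality $b_{2p}=b_{32-2p}$, and since $16$ is even the terms at $p$ and $16-p$ carry the same sign; as $\sum_{p=0}^{7}(-1)^p b_{2p}=1-1+1-1+2-2+2-2=0$, only the middle term survives, giving $\sigma=b_{16}=3$. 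The step I expect to require the most care is precisely the signature: one must justify that $\EIII$ has no off-diagonal Hodge numbers (so that the Hodge index theorem collapses to an alternating sum of Betti numbers) and track the sign convention in the index formula; the regular-sequence argument for the Poincar\'e series is routine once the degrees $18,24$ of the two relations are identified.
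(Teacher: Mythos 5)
Your proposal is correct and takes essentially the same route as the paper: the paper states the Corollary as an immediate consequence of the presentation $H^*(\EIII)\cong\ZZ[a_1,a_4]/(r_{9},r_{12})$ obtained from the Borel formula \eqref{borel}, which is exactly your starting point, and your complete-intersection computation of the Poincar\'e series (with the Weyl-group degree check) fills in the details the paper leaves implicit. Your signature argument --- no off-diagonal Hodge numbers since the cohomology of $\EIII$ is spanned by algebraic Schubert cycles, so the Hodge index theorem reduces to the alternating sum of even Betti numbers, which by Poincar\'e duality collapses to the middle one, $\sigma = b_{16} = 3$ --- is the natural justification of the paper's unproved claim and is likewise consistent with its approach.
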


Since $\EIII$ can be looked at as the projective plane over the complex octonions, it is natural to similarly construct a projective line over complex octonions, that turns out to be a totally geodesic submanifold of the former \cite{EscRGL,EscSSD}. This is the oriented Grassmannian:
\begin{equation}\label{line}
( \CC \otimes \OO) P^1= \mathrm{Gr_2} (\mathbb R^{10}),
\end{equation}
which is a non singular quadric $Q_8 \subset \CC P^9$, thus again a Hermitian symmetric space. Its rational cohomology is given by:

\begin{pr}
\begin{equation}\label{rosenfeld1}
H^*(\mathrm{Gr_2} (\mathbb R^{10})) \cong \ZZ[ e, e^\perp]/(\rho_{5},\rho_{8})
\end{equation}
where $e \in H^2$ and $e^\perp \in H^8$ are the Euler classes of the tautological vector bundle and of its orthogonal complement, and the relations are:
$\rho_5=ee^\perp \in H^{10}, \rho_8=e^8 - (e^\perp)^2 \in H^{16}$.
\end{pr}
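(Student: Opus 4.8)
The plan is to compute $H^*(\mathrm{Gr}_2(\RR^{10}))$ by the Borel presentation \eqref{borel}, exactly as one does for $\EIII$. I would write the oriented Grassmannian \eqref{line} as the compact Hermitian symmetric space $\mathrm{Gr}_2(\RR^{10}) = \SO{10}/(\SO{2}\times\SO{8})$, which has maximal rank, and fix a common maximal torus $T = T^1\times T^4 \subset \SO{2}\times\SO{8} \subset \SO{10}$, so that $H^*(BT;\QQ) = \QQ[x_1,\dots,x_5]$ with $\deg x_i = 2$. The two invariant subrings are standard: for $G=\SO{10}$ one has $H^*(BT)^{W(G)} = \QQ[p_1,p_2,p_3,p_4,e_{10}]$, where the $p_i$ are the elementary symmetric functions in $x_1^2,\dots,x_5^2$ (degrees $4,8,12,16$) and $e_{10}=x_1x_2x_3x_4x_5$ is the Pfaffian (degree $10$); for $H=\SO{2}\times\SO{8}$, taking $x_1$ as the $\SO{2}$-coordinate and $x_2,\dots,x_5$ as the $\SO{8}$-coordinates, $W(H)=\{1\}\times W(D_4)$ and
\[
H^*(BT)^{W(H)} = \QQ[x_1]\otimes\QQ[p_1',p_2',p_3',e'],
\]
with $p_1',p_2',p_3'$ symmetric in $x_2^2,\dots,x_5^2$ and $e'=x_2x_3x_4x_5$ the $\SO{8}$-Pfaffian (degree $8$). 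Geometrically $x_1=e$ is the Euler class of the tautological oriented $2$-plane bundle $\gamma$ and $e'=e^\perp$ is the Euler class of its rank-$8$ complement $\gamma^\perp$.

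Next I would translate the ideal $H^{>0}(BT)^{W(G)}$, generated by $p_1,\dots,p_4,e_{10}$, into relations among $e$ and $e^\perp$ via the splitting $\gamma\oplus\gamma^\perp=\underline{\RR^{10}}$. Multiplicativity of the Euler class gives $e_{10}=e(\gamma)e(\gamma^\perp)=e\,e^\perp$, while this bundle is trivial, so the class vanishes: this is exactly $\rho_5=e\,e^\perp\in H^{10}$. The Pontryagin relations come from $p(\gamma)p(\gamma^\perp)=p(\underline{\RR^{10}})=1$, i.e.
\[
(1+e^2)\bigl(1+p_1'+p_2'+p_3'+(e^\perp)^2\bigr)=1,
\]
using $p_1(\gamma)=e^2$ and $p_4'=(e^\perp)^2$. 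Reading this identity degree by degree eliminates the $\SO{8}$-Pontryagin generators,
\[
p_1'=-e^2,\qquad p_2'=e^4,\qquad p_3'=-e^6,
\]
and the degree-$16$ term yields the single surviving relation $(e^\perp)^2=e^8$, that is $\rho_8=e^8-(e^\perp)^2\in H^{16}$. The degree-$20$ term forces $e^2(e^\perp)^2=0$, already a consequence of $\rho_5$; correspondingly $e^9=e\cdot e^8=e(e^\perp)^2=(e\,e^\perp)e^\perp=0$, so the ring truncates correctly at degree $16$. Thus the presentation collapses to $\QQ[e,e^\perp]/(\rho_5,\rho_8)$, and counting the surviving monomials — the $e^k$ for $0\le k\le 8$ together with the extra independent class $e^\perp$ in degree $8$ (where $e^8=(e^\perp)^2$) — reproduces the Poincar\'e polynomial $1+t^2+t^4+t^6+2t^8+t^{10}+t^{12}+t^{14}+t^{16}$ of the quadric $Q_8$.

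Finally I would upgrade from $\QQ$ to $\ZZ$ coefficients. Since $\mathrm{Gr}_2(\RR^{10})\cong Q_8$ admits a Schubert cell decomposition with cells only in even real dimensions, its integral cohomology is free abelian and concentrated in even degrees, so nothing is lost rationally; the classes $e$ and $e^\perp$ are integral by construction. It then suffices to verify that the tautological ring map $\ZZ[e,e^\perp]/(\rho_5,\rho_8)\to H^*(\mathrm{Gr}_2(\RR^{10});\ZZ)$ — well defined because $\rho_5,\rho_8$ vanish rationally, hence integrally by torsion-freeness — is an isomorphism of free $\ZZ$-modules of equal rank in each degree, which reduces to checking that $e$ generates $H^2$ and that $\{e^4,e^\perp\}$ is an integral basis of $H^8$. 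I expect this integral refinement to be the only genuine obstacle: the rational computation is routine Weyl-invariant bookkeeping, whereas matching $e^4$ and $e^\perp$ against the Schubert basis of $H^8$ given by the two families of maximal linear subspaces $\CC P^4,(\CC P^4)'$ of Remark \ref{subgrassmannians} requires pinning down the integral lattice, and this is precisely the point of contact with the algebraic-cycle computation of Theorem \ref{main}.
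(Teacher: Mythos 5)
Your rational computation is correct and complete, and it is essentially the argument the paper has in mind: the paper in fact gives \emph{no} proof of this Proposition, stating it right after recalling Borel's presentation \eqref{borel}, and it introduces the statement with the words ``Its rational cohomology is given by''. For that rational statement your Weyl-invariant bookkeeping (invariants of $W(D_5)$ versus $\{1\}\times W(D_4)$, the Pfaffian giving $\rho_5$, elimination of $p_1',p_2',p_3'$ and the degree-16 term giving $\rho_8$) is a valid and standard proof.

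The genuine gap is in your final paragraph, and it cannot be closed: the verification you defer --- that $\{e^4,e^\perp\}$ is a $\ZZ$-basis of $H^8$ --- fails. Identify $\mathrm{Gr}_2(\RR^{10})\cong Q_8\subset \CC P^9$, let $h$ be the hyperplane class and let $a,b\in H^8(Q_8;\ZZ)$ be the classes of the two rulings $\CC P^4$, $(\CC P^4)'$ of Remark \ref{subgrassmannians}. Since here $m=4$ is even, the intersection numbers are $a^2=b^2=1$, $a\cdot b=0$, and $h^4=a+b$; moreover $e=\pm h$. Writing $e^\perp=\alpha a+\beta b$, the integral relations $e\,e^\perp=0$ and $(e^\perp)^2=e^8=2\,[\mathrm{pt}]$ force $(\alpha+\beta)\sigma_5=0$ and $\alpha^2+\beta^2=2$, where $\sigma_5$ generates $H^{10}(Q_8;\ZZ)\cong\ZZ$; hence $e^\perp=\pm(a-b)$. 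Thus in degree $8$ the ring generated by $e$ and $e^\perp$ is the sublattice $\langle a+b,\,a-b\rangle$, of index $2$ in $\ZZ a\oplus\ZZ b$, and the discrepancy propagates upward ($e^5=h^5=h(a+b)=2\sigma_5$, etc.). So the tautological map $\ZZ[e,e^\perp]/(\rho_5,\rho_8)\to H^*(Q_8;\ZZ)$ is injective but not surjective in degrees $8,10,\dots,16$, and becomes an isomorphism only after tensoring with $\QQ$ (or $\ZZ[1/2]$). In other words, the Proposition is true exactly as the paper announces it --- as a statement about rational cohomology, the $\ZZ$ in the display being an abuse of notation --- and that is what your argument proves; the integral upgrade you planned is impossible, and your own closing remark points at the reason: the individual ruling classes $a$, $b$, which the Schubert basis requires, are not in the subring generated by the two Euler classes.
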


Thus:

\begin{co}
The Poincar\'e polynomial, the Euler characteristics and signature are
\begin{equation}
\begin{split}
&\mathrm{Poin}_{_{\mathrm{Gr_2} (\mathbb R^{10})}} = 1+t^2+t^4+t^6+2t^8+ \dots ,
\\
&\qquad \chi_{_{\mathrm{Gr_2} (\mathbb R^{10})}} =  10  , \qquad \sigma_{_{\mathrm{Gr_2} (\mathbb R^{10})}} = 2.
\end{split}
\end{equation}
\end{co}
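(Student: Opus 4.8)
The plan is to read off all three invariants directly from the ring presentation \eqref{rosenfeld1}, so the whole argument reduces to understanding the graded quotient $\ZZ[e,e^\perp]/(\rho_5,\rho_8)$ with $\deg e = 2$, $\deg e^\perp = 8$, $\rho_5 = e e^\perp$ and $\rho_8 = e^8 - (e^\perp)^2$. First I would reduce to a monomial basis. The relation $\rho_5 = e e^\perp = 0$ annihilates every monomial $e^a (e^\perp)^b$ with both $a\ge 1$ and $b\ge 1$, leaving only the pure powers $e^a$ and $(e^\perp)^b$. The relation $\rho_8$ then identifies $e^8$ with $(e^\perp)^2$; combined with $e e^\perp = 0$ this forces $e^9 = e\,(e^\perp)^2 = 0$ and $(e^\perp)^3 = e^8 e^\perp = 0$, so all classes vanish above degree $16$.

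Going degree by degree I obtain the basis: $1$ in degree $0$; $e^k$ in degrees $2,4,6$; the two independent classes $e^4$ and $e^\perp$ in degree $8$ (no relation lives in degree $8$, so these are independent); $e^5,e^6,e^7$ in degrees $10,12,14$; and the single class $e^8 = (e^\perp)^2$ in degree $16$. This yields the Betti numbers $1,1,1,1,2,1,1,1,1$, hence the Poincaré polynomial $1+t^2+t^4+t^6+2t^8+t^{10}+t^{12}+t^{14}+t^{16}$, whose stated initial segment is $1+t^2+t^4+t^6+2t^8+\dots$. Since all cohomology sits in even degree, the Euler characteristic is simply the sum of the Betti numbers, $1+1+1+1+2+1+1+1+1 = 10$.

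For the signature I would examine the intersection form on the middle cohomology $H^8$, namely the unimodular symmetric pairing $H^8\times H^8 \to H^{16}\cong\ZZ$ given by cup product, with $H^{16}$ generated by the top class $e^8 = (e^\perp)^2$. In the basis $(e^4, e^\perp)$ one computes $e^4 \cup e^4 = e^8$, $e^\perp \cup e^\perp = (e^\perp)^2 = e^8$, and $e^4 \cup e^\perp = e^4 e^\perp = 0$ by $\rho_5$. Thus the Gram matrix is the identity, the form is positive definite, and $\sigma = 2 - 0 = 2$.

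The computation is essentially routine once the presentation \eqref{rosenfeld1} is in hand. The only point requiring genuine care is the signature step: one must correctly identify the middle intersection form, noting that $\rho_8$ makes both diagonal entries equal to $+1$ while $\rho_5$ forces the off-diagonal entry to vanish. This is precisely what produces a \emph{definite} (rather than hyperbolic) form, and hence the value $\sigma = 2$ rather than $0$; the contrast with $\EIII$, whose middle form on a three-dimensional $H^8$ gives $\sigma=3$, is exactly traceable to this structure.
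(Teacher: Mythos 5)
Your computation is correct and is essentially the paper's own (implicit) argument: the corollary is stated there as an immediate consequence of the ring presentation \eqref{rosenfeld1}, and you read off the Betti numbers, Euler characteristic and middle intersection form from that presentation exactly as intended. The only point you leave tacit is the sign convention in the signature step, namely that $\langle e^8,[Gr_2(\RR^{10})]\rangle=+1$ (and not $-1$) for the natural complex orientation of the quadric; this follows since $e$ generates $H^2$ and is therefore a nonzero real multiple of the K\"ahler class, so $e^8$ integrates positively, making your Gram matrix $+\mathrm{Id}$ rather than $-\mathrm{Id}$ and confirming $\sigma=2$.
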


\emph{End of Proof of Theorem \ref{half-spin} and Proof of Theorem \ref{main}.}
To relate $\Phi_{\Spin{10}}$ with algebraic cycles in $V_{(4)} \subset \CC P^{26}$, look first at its totally geodesic projective line over complex octonions, i.e.\ at $Q_8 \cong Gr_2(\RR^{10})$, cf.\ \eqref{line}. There is a natural homogeneous sphere bundle one can consider over it, namely:
\begin{equation}\label{spherebundle}
\Spin{10}/\Spin{7} \times \mathrm{SO}(2) \stackrel{S^7}{\longrightarrow} Gr_2(\RR^{10}).
%947 and in particular formula
\end{equation}
This is in fact a sub-bundle to the restricted $S^9$-bundle
\[
\Esix/\Spin{9} \cdot \mathrm{U}(1)\vert_{Gr_2(\RR^{10})} \stackrel{S^9}{\longrightarrow} Gr_2(\RR^{10}) \subset \EIII,
\]
and these are the sphere bundles associated with the defining vector bundles of two even Clifford structures we are considering. The former is the rank 8 even Clifford structure defined on any oriented Grassmannian $\SO{k+8}/\SO{8}\times \SO{k}$ (cf.\ \cite[pages 955 and 965]{MoSCSR}), and here globally defined since $k=2$ is even. The latter is the rank 10 even Clifford structure we defined in the previous Section, and here restricted to $Gr_2(\RR^{10})$.

Next, from Formulas \eqref{complex9}, one sees that the vanishing of coordinates $\shortform{1',\dots , 8'}$ on the ``octonionic line" $Gr_2(\RR^{10})$ makes $\psi_{0\beta} \vert_{Gr_2(\RR^{10})}=0$ for $\beta = 1, \dots , 8$. Thus by looking at $\tau_4$ as sum of $4 \times 4$ principal minors, we obtain:
\[
\tau_4 \big(\psi^D\vert_{Gr_2(\RR^{10})} \big)= \tau_4\big(\psi_{\alpha \beta} \big)_{1 \leq \alpha < \beta \leq 8} + \sum_{1 \leq \alpha < \beta \leq 8} (\psi_{\alpha \beta}\psi_{09})^2.
\]

We need now the following fact. The K\"ahler 2-forms $\psi_{\alpha \beta}$ $(0 \leq \alpha < \beta \leq 9)$, that we wrote explicitly and globally on $\CC^{16}$, are of course only local on $\EIII$. They are associated with its non-flat even parallel rank $10$ Clifford structure. In situations like this it has been proved that such K\"ahler 2-forms turn out to be proportional to the curvature forms $\Omega_{\alpha \beta}$ of a metric connection on the structure bundle. An observation like this can be traced back to S. Ishihara \cite{IshQKM} in the context of quaternion-K\"ahler manifolds, where the local K\"ahler 2-forms  associated with the local compatible almost complex structures $I,J,K$ are recognized to be proportional to the curvature forms. Later, a similar argument has been developed by A. Moroianu and U. Semmelmann to get the same identification on Riemannian manifolds $M$ equipped with a non-flat parallel even Clifford structure \cite[Prop. 2.10 (ii) (a) at page 947 and Formula (14) at page 949]{MoSCSR}. In all these contexts, the Einstein property of the manifold is insured from the hypotheses. The proportionality stated in \cite{MoSCSR} reads:
\[
\Omega_{\alpha \beta} = \kappa \psi_{\alpha \beta}
\]
where $\kappa$ is deduced from the Ricci endomorphism $Ric$ on $M$ as follows
\[
Ric = \kappa(n/4 + 2r -4),
\]
and $n,r$ are the real dimension of the manifold and the rank of its non-flat even Clifford structure, respectively. Note that this insures that the 8-form $\tau_4 (\psi^D)$ on $\EIII$ is closed, ending the proof of Theorem \ref{half-spin}.

Coming now to Theorem \ref{main}, look at the totally geodesic $Q_8 \cong Gr_2(\RR^{10})$. On $Q_8$ the restriction of our $\psi_{\alpha \beta}$, $(1 \leq \alpha < \beta \leq 8)$ defines a rank 8 even Clifford structure. We normalize the metric on $\EIII$ in such a way that the induced metric on $Q_8$ is the same as the one induced by $Q_8 \subset \CC P^9$, with $\CC P^9$ of holomorphic sectional curvature $4$. This choice gives $Ric (\CC P^9)=20$ and $Ric(Q_8)=16$, so that the above identity with $n=16$ and $r=8$ gives $\kappa=1$. Therefore on the quadric $Q_8$:
\[
\Omega_{\alpha \beta} =  \psi_{\alpha \beta}.
\]

Thus the $\psi_{\alpha\beta}$ are local curvature forms of a metric connection on the rank 8 Euclidean vector bundle over $Gr_2(\RR^{10})$ having \eqref{spherebundle} as associated sphere bundle. This vector bundle is easily recognized to be $\gamma_2^\perp (\RR^{10})$, the orthogonal complement in $\RR^{10}$ of the tautological plane bundle over $Gr_2(\RR^{10})$, and it defines a non-flat even rank $8$ Clifford structure on $Gr_2(\RR^{10})$, cf.\ \cite[Tables at pages 955 and 965]{MoSCSR}. Using Chern-Weil theory, and recalling that $\psi_{09} \vert_{\CC^8} = \frac{i}{2} \big( \shortform{1\bar 1}+ \dots +\shortform{8 \bar 8} \big)$ and $\psi_{0\beta} \vert_{\CC^8} =0$ otherwise, we get:
\begin{equation}\label{last}
\tau_4 \big(\psi^D\vert_{Gr_2(\RR^{10})} \big)= (2\pi)^4 p_2(\gamma_2^\perp (\RR^{10})) -4 \omega^4.
\end{equation}
Here $p_2$ is the second Pontrjagin class and the last coefficient $-4$ comes from:
\[
\sum_{1 \leq \alpha < \beta \leq 8} (\psi_{\alpha \beta}\vert_{\CC^8})^2 = \rho_2\vert_V =-4(\omega\vert_V)^2,
\]
cf.\ beginning of the proof in \ref{tau}.

The equality \ref{last} can be reread by restricting at the maximal linear subspaces $\CC P^4$, $(\CC P^4)'$, that parametrize those oriented 2-planes in $\RR^{10}$ that are complex lines with respect to a complex structure preserving or reversing the orientation, cf.\ Remark \ref{subgrassmannians}. If $p_2=p_2(\gamma_1^\perp (\CC^{5}))$ is now the second Pontrjagin class of the orthogonal complement of the tautological line bundle over $\CC P^4$, this gives:
\[
\frac{1}{(2\pi)^4} \int_{\CC P^4 \; \text{or} \; (\CC P^4)'} \Phi_{\Spin{10}} = \int_{\CC P^4 \; \text{or} \; (\CC P^4)'} p_2 -\frac{4}{(2\pi)^4} \int_{\CC P^4 \; \text{or} \; (\CC P^4)'} \omega ^4.
\]
Here the last integral can be computed in terms of the Ricci form $\rho =5 \omega$, thus allowing to pass to the first Chern class $c_1=\frac{\rho}{2\pi}$ of $M= \CC P^4 \; \text{or} \; (\CC P^4)'$. Recalling that $c_1$ is 5 times a generator of the integral cohomology:
\[
\frac{1}{(2\pi)^4} \int_{M= \CC P^4 \; \text{or} \; (\CC P^4)'} \omega^4 =\frac{1}{(2\pi)^4} \frac{(2\pi)^4}{5^4} \int_M c_1^4(M) = 1.
\]
On the other hand the Pontrjagin class $p_2(\gamma_1^\perp (\CC^{5}))$ relates with the Chern classes of the same bundle as $p_2=2c_4-2c_1c_3+c_2^2$, giving again the generator of the top integral cohomology class. Thus, taking into account the orientation matter concerning $\CC P^4$ and $(\CC P^4)'$ and discussed in Remark \ref{subgrassmannians}, one gets:
\[
\frac{1}{(2\pi)^4} \int_{\CC P^4 \; \text{or} \; (\CC P^4)'} \Phi_{\Spin{10}} = \pm 1-4 = \begin{cases} -3  \\ -5 \end{cases}, \qquad \frac{1}{(2\pi)^4} \int_{\CC P^4 \; \text{or} \; (\CC P^4)'} \omega^4 = 1,
\]
and the conclusion follows. \hfill $\Box$

\emph{Acknowledgements.} This work originated from remarks on talks we gave in Moscow, Banff, Marburg, Bucharest. We thank in particular J. Berndt for pointing out possible relations of our previous work \cite{PaPSAC} with the Rosenfeld planes, U. Semmelman and A. Moroianu for taking the paper \cite{MoSCSR} to our attention and discussions on Clifford structures, and finally M. Radeschi for telling us about the notion of Clifford system.

%\thanks{Both authors were supported by the GNSAGA group of INdAM and by the MIUR under the PRIN Project ``Variet\`a reali e complesse: geometria, topologia e analisi armonica''.}

%~ \begin{funding}
%~ Both authors were supported by the GNSAGA group of INdAM and by the MIUR under the PRIN Project ``Variet\`a reali e complesse: geometria, topologia e analisi armonica''.
%~ \end{funding}

%\bibliographystyle{alpha}
%\bibliography{/home/schroeder/Dropbox/mauwork/mau.bib_aggiornato/mau}
%\bibliography{mau.bib}

\begin{thebibliography}{OPPV13}

\bibitem[AB03]{AtBPPS}
M.~Atiyah and J.~Berndt.
\newblock Projective planes, {S}everi varieties and spheres.
\newblock In {\em {Surveys in Differential Geometry}}, volume VIII, pages
  1--27. Int. Press, Somerville, MA, 2003.

\bibitem[Bae02]{BaeOct}
J.~C. Baez.
\newblock The octonions.
\newblock {\em Bull. Amer. Math. Soc. (N.S.)}, 39(2):145--205, 2002.

\bibitem[Ber72]{BerCCP}
M.~Berger.
\newblock Du c\^ot\'e de chez {P}u.
\newblock {\em Ann. Sci. \'Ecole Norm. Sup. (4)}, 5:1--44, 1972.

\bibitem[Bor53]{BorCEF}
A.~Borel.
\newblock Sur la cohomologie des espaces fibr\'es principaux et des espaces
  homog\`enes de groupes de {L}ie compacts.
\newblock {\em Ann. of Math. (2)}, 57:115--207, 1953.

\bibitem[Bry99]{BryRSL}
R.~L.~Bryant.
\newblock {\href{http://www.math.duke.edu/~bryant/Spinors.pdf}{Remarks on
  Spinors in Low Dimension}}, April 1999.
\newblock Available at \url{http://www.math.duke.edu/~bryant/Spinors.pdf}.

\bibitem[DZ10]{DuZCRG}
H.~Duan and X.~Zhao.
\newblock The {C}how rings of generalized {G}rassmannians.
\newblock {\em Found. Comput. Math.}, 10(3):245--274, 2010.

\bibitem[Esc12a]{EscRGL}
J.-H. Eschenburg.
\newblock {Riemannian Geometry and Linear Algebra}, 2012.
\newblock Available at
  \url{http://www.math.uni-augsburg.de/~eschenbu/riemlin.pdf}.

\bibitem[Esc12b]{EscSSD}
J.-H. Eschenburg.
\newblock {Symmetric Spaces and Division Algebras}, 2012.
\newblock Available at
  \url{http://www.math.uni-augsburg.de/~eschenbu/symdiv.pdf}.

\bibitem[FKM81]{FKMCNI}
D.~Ferus, H.~Karcher, and H.~F. M{\"u}nzner.
\newblock Cliffordalgebren und neue isoparametrische {H}yperfl\"achen.
\newblock {\em Math. Z.}, 177(4):479--502, 1981.

\bibitem[Fri01]{FriWSS}
T.~Friedrich.
\newblock Weak {S}pin(9)-structures on 16-dimensional {R}iemannian manifolds.
\newblock {\em Asian J. Math.}, 5(1):129--160, 2001.

\bibitem[GR15]{GoRHCC}
C.~Gorodski and M.~Radeschi.
\newblock On homogeneous composed {C}lifford foliations, 2015.
\newblock \href{http://arxiv.org/abs/1503.09058}{arXiv:1503.09058 [math.DG]}.

\bibitem[Har90]{HarSpC}
F.~R. Harvey.
\newblock {\em Spinors and Calibrations}, volume~9 of {\em Perspectives in
  Mathematics}.
\newblock Academic Press Inc., Boston, MA, 1990.

\bibitem[IM05]{IlMCRC}
A.~Iliev and L.~Manivel.
\newblock The {C}how ring of the {C}ayley plane.
\newblock {\em Compos. Math.}, 141(1):146--160, 2005.

\bibitem[Ish74]{IshQKM}
S.~Ishihara.
\newblock Quaternion {K}\"ahlerian manifolds.
\newblock {\em J. Differential Geometry}, 9:483--500, 1974.

\bibitem[LM01]{LaMPGF}
J.~M. Landsberg and L.~Manivel.
\newblock The projective geometry of {F}reudenthal's magic square.
\newblock {\em J. Algebra}, 239(2):477--512, 2001.

\bibitem[LVdV84]{LaVTGP}
R.~Lazarsfeld and A.~Van~de Ven.
\newblock {\em Topics in the geometry of projective space}, volume~4 of {\em
  DMV Seminar}.
\newblock Birkh\"auser Verlag, Basel, 1984.

\bibitem[Mei13]{MeiCAL}
E.~Meinrenken.
\newblock {\em Clifford algebras and {L}ie theory}, volume~58 of {\em Erg. der
  Math. und Grenz.}
\newblock Springer, Heidelberg, 2013.

\bibitem[MP13]{MoPHRH}
A.~Moroianu and M.~Pilca.
\newblock Higher rank homogeneous {C}lifford structures.
\newblock {\em J. Lond. Math. Soc. (2)}, 87(2):384--400, 2013.

\bibitem[MS11]{MoSCSR}
A.~Moroianu and U.~Semmelmann.
\newblock {Clifford structures on Riemannian manifolds}.
\newblock {\em Adv. Math.}, 228(2):940--967, 2011.

\bibitem[OPPV13]{OPPSGO}
L.~Ornea, M.~Parton, P.~Piccinni, and V.~Vuletescu.
\newblock {Spin(9) geometry of the octonionic Hopf fibration}.
\newblock {\em Transformation Groups}, 18(3):845--864, 2013.

\bibitem[Pos86]{PosLGL}
M.~Postnikov.
\newblock {\em Lie groups and {L}ie algebras}, volume~V of {\em Lectures in
  geometry}.
\newblock ``Mir'', Moscow, 1986.

\bibitem[PP12]{PaPSAC}
M.~Parton and P.~Piccinni.
\newblock {Spin(9) and almost complex structures on 16-dimensional manifolds}.
\newblock {\em Ann. Global Anal. Geom.}, 41(3):321--345, 2012.

\bibitem[PP13]{PaPSWM}
M.~Parton and P.~Piccinni.
\newblock {Spheres with more than 7 vector fields: All the fault of Spin(9)}.
\newblock {\em Linear Algebra Appl.}, 438(3):1113--1131, 2013.

\bibitem[Rad14]{RadCAN}
M.~Radeschi.
\newblock Clifford algebras and new singular {R}iemannian foliations in
  spheres.
\newblock {\em Geom. Funct. Anal.}, 24(5):1660--1682, 2014.

\bibitem[Seg72]{SegPGA}
B.~Segre.
\newblock {\em {Prodromi di geometria algebrica}}.
\newblock {Roma: Edizioni Cremonese}, 1972.

\bibitem[Sev01]{SevIPD}
F.~Severi.
\newblock {Intorno ai punti doppi impropri di una superficie generale dello
  spazio a quattro dimensioni, e a' suoi punti tripli apparenti.}
\newblock {\em Palermo Rend.}, 15:33--51, 1901.

\bibitem[TW74]{ToWICR}
H.~Toda and T.~Watanabe.
\newblock The integral cohomology ring of {${\bf F}_{4}/{\bf T}$} and {${\bf
  E}_{6}/{\bf T}$}.
\newblock {\em J. Math. Kyoto Univ.}, 14:257--286, 1974.

\bibitem[Yok09]{YokELG}
I.~Yokota.
\newblock {Exceptional Lie groups}, 2009.
\newblock \href{http://arxiv.org/abs/0902.0431}{arXiv:0902.0431 [math.DG]}.

\bibitem[Zak85]{ZakSeV}
F.~L. Zak.
\newblock Severi varieties.
\newblock {\em Mat. Sb. (N.S.)}, 126(168)(1):115--132, 144, 1985.

\end{thebibliography}

\end{document}